\title[Automorphisms of $S^3$ that preserve spatial graphs and handlebodies]
{Automorphisms of the 3-sphere that preserve spatial graphs and handlebody-knots}
\author{Yuya Koda}
\thanks{The author is supported by JSPS Postdoctoral Fellowships for Research Abroad, and
by the Grant-in-Aid for Young Scientists (B), JSPS KAKENHI Grant Numbers 20525167 and 26800028.}
\address{
Department of Mathematics \newline 
\indent Hiroshima University, 1-3-1 Kagamiyama, Higashi-Hiroshima, 739-8526, Japan}
\email{ykoda@hiroshima-u.ac.jp}
\theoremstyle{plain}
\newtheorem*{theorem*}{Theorem}
\newtheorem*{lemma*} {Lemma}
\newtheorem*{corollary*} {Corollary}
\newtheorem*{proposition*}{Proposition}
\newtheorem*{conjecture*}{Conjecture}
\newtheorem{theorem}{Theorem}[section]
\newtheorem{lemma}[theorem]{Lemma}
\newtheorem{corollary}[theorem]{Corollary}
\newtheorem{proposition}[theorem]{Proposition}
\theoremstyle{remark}
\newtheorem*{notation}{Notation}
\newtheorem*{example}{Example}
\theoremstyle{definition}
\newcommand{\Integer}{\mathbb{Z}}
\newcommand{\Int}{\mathrm{Int}}
\newcommand{\MCG}{\mathcal{MCG}}
\newcommand{\Aut}{\mathrm{Aut}}
\newcommand{\Sym}{\mathrm{Sym}}
\newcommand{\TSG}{\mathrm{TSG}}
\newcommand{\Stab}{\mathrm{Stab}}
\newcommand{\rel}{\mathrm{rel}}
\newcommand{\id}{\mathrm{id}}
\begin{document}
\maketitle

\begin{abstract}
We consider the group of isotopy classes of automorphisms of 
the 3-sphere that preserve a spatial graph or a handlebody-knot 
embedded in it. 
We prove that the group is finitely presented for 
an arbitrary spatial graph or a reducible handlebody-knot 
of genus two. 
We also prove that the groups for ``most" irreducible 
genus two handlebody-knots are finite. 
\end{abstract}

\vspace{1em}

\begin{small}
\hspace{2em}  \textbf{2010 Mathematics Subject Classification}: 
57M15; 20F05, 57M25. 

\hspace{2em} \textbf{Keywords}: spatial graph, handlebody, mapping class group.
\end{small}

\section*{Introduction}

In the present paper, we consider two natural 
generalizations of classical knot theory: the studies of spatial 
graphs and handlebody-knots.

By a {\it graph} 
we shall mean the underlying 
space of a finite simplicial complex of dimension 
at most one. 
A {\it spatial graph} is a graph embedded 
in a $3$-manifold. 
Two spatial graphs are said to be equivalent 
if one can be transformed into the other by
an ambient isotopy of the $3$-manifold. 
Since a circle is a graph, the study of 
spatial {graphs} is 
a generalization of classical knot theory. 
As in classical knot theory, 
we can study spatial graphs in combinatorial, topological, algebraic  
and geometric ways. 
For example, we can introduce hyperbolic geometry 
into the study of spatial graphs. 
In fact, most pairs of 3-manifolds and  
spatial trivalent graphs in them 
admit unique hyperbolic structures with parabolic 
meridians, that is, 
the complements of the spatial graphs 
carry (possibly-incomplete) metrics of constant sectional curvature 
$-1$ which complete to 3-manifolds with non-compact 
geodesic boundary having: toric cusps at 
the knot components of the graph,  
annular cusps at the meridians of the edges, and 
geodesic 3-punctured boundary spheres at the vertices, 
see e.g. Heard-Hodgson-Martelli-Petronio \cite{HHMP10}. 
In this way, one can define hyperbolic invariants, such as volumes and arithmetic invariants, 
for most spatial trivalent graphs. 
We also note that the above hyperbolic structure are useful 
when we study orbifold structures, where 
spatial trivalent graphs appear 
as the singular loci of 3-orbifolds, see e.g. Cooper-Hodgson-Kerckhoff \cite{CHK00}.

A {\it handlebody} is a compact orientable 3-manifold obtained by 
attaching a finite number of 1-handles to the 3-ball. 
The nuber of the attached 1-handles is called the {\it genus} of 
the handlebody. 
A {\it handlebody-knot} is a handlebody embedded in a 3-manifold. 
Two handlebody-knots are said to be equivalent 
if one can be transformed into the other by
an ambient isotopy of the 3-manifold. 
We note that the study of genus 1 handlebody-knots is 
precisely the theory of classical knots. 
Handlebody-knots naturally appear 
in the theory of 3-manifolds as well, 
since every closed orientable 3-manifold 
admits a decomposition into two handlebodies called 
a Heegaard splitting and these handlebodies can be regarded as 
the simplest handlebody-knots in the 3-manifold. 
For studying handlebody-knots, we can again use hyperbolic geometry.  
In fact, the exteriors of most handlebody-knots 
admits hyperbolic structures with geodesic boundary. 
We note that handlebody-knots appear in the study of finiteness properties 
of invariants of 3-manifolds as well. 

Spatial graphs are more rigid objects than handlebody-knots in the sense that 
a spatial graph defines a unique handlebody-knot by taking its regular neighborhood 
up to equivalence, 
whereas regular neighborhoods of 
many mutually inequivalent spatial graphs may define equivalent handlebody-knots.  
For example, a regular neighborhood of the 
union of a tunnel number one knot in $S^3$ and 
its unknotting tunnel is always equivalent to 
the trivial handlebody-knot of genus two, that is, 
the genus two handlebody-knot whose exterior is also a handlebody. 
For the case of trivalent graphs, 
it is known that two spatial trivalent graphs 
define equivalent handlebody-knots 
if and only if one can be transformed 
into the other by
a finite sequence of IH-moves 
(or Whitehead moves) on the spatial graphs and by ambient 
isotopies of the 3-manifold, see Ishii \cite{Ish08}. 
Here an IH-move on a spatial graph 
is a replacement of 
a subspace of the spatial graph shaped like the letter $I$ in the  
spatial graph by a space shaped like the letter $H$.  

\vspace{1em}

For a subspace $N$ of a compact orientable 3-manifold $M$, 
the {\it mapping class group} of the pair $(M , N)$ 
is defined to be the group of isotopy classes of 
the automorphisms of $M$ that preserve $N$. 
We denote this group by $\MCG(M, N)$ and call it the 
{\it symmetry group} of $(M, N)$ as well to emphasize that it 
describes the symmetry of the subspace $N$ in $M$. 
The aim of the present paper is to investigate whether  
the symmetry group $\MCG(M, N)$ is 
finitely presented when 
$M = S^3$ and $N$ is a spatial graph or a 
handlebody-knot in $S^3$. 

Recall the case where 
$M$ is the 3-sphere $S^3$ and $N = K$ is a knot in $S^3$. 
Then the symmetry group $\MCG (S^3 , K) $  
has been widely studied by 
various authors, see e.g. Boileau-Zimmermann \cite{BZ89} and Kawauchi \cite{Kaw96}. 
In this case, the group $\MCG (S^3 , K)$ is isomorphic to 
the mapping class group of 
the exterior of $K$ due to 
a celebrated result of Gordon-Luecke \cite{GL89}, 
which states that if two knots in $S^3$
have homeomorphic complements then the homeomorphism between the
two complements extends to an automorphism of $S^3$. 
It follows that the symmetry group 
$\MCG (S^3 , K)$ is finitely presented from 
the result of McCullough \cite{McC91} on 
the mapping class groups of irreducible 
orientable sufficiently large 
$3$-manifolds. 
Note, in contrast, that it is
known that handlebody-knots of arbitrary genera 
are not determined 
by their complements, see Motto \cite{Mot90} and Lee-Lee \cite{LL12}. 

When $V$ is a genus $g$ unknotted handlebody 
in the 3-sphere $S^3$, the group $\MCG_+(S^3, V)$ is called 
the {\it Goeritz group} of genus $g$. 
In the case of genus two, 
it was shown by Goeritz \cite{Goe33} in 1933 
that $\MCG_+(S^3, V)$ is finitely generated. 
Scharlemann \cite{Sch04} gave a modern, 
concise proof of this result, 
and Akbas \cite{Akb08} and Cho \cite{Cho08} independently 
gave a finite presentation of $\MCG_+(S^3, V)$. 
However, in the case of higher genus, 
the existence of such generating sets still remains unknown. 
The best general reference here is the introduction of \cite{Cho08}. 
It is worth pointing out that finding a generating set for the 
symmetry group of an unknotted handlebody is 
very important for both the theory of handlebody-knots 
and that of Heegaard splittings.

\vspace{1em}

The paper is organized as follows. 
In Section \ref{sec:Preliminaries}, we set up notation and terminology. 
Then we review some of the standard facts about 
symmetry groups and the history of studying them. 
In Section \ref{sec:Symmetry groups of spatial graphs}, we discuss the case where 
$M$ is the 3-sphere $S^3$ and $N = \Gamma$ is a spatial graph in $S^3$. 
Our first main result, Theorem \ref{thm: symmetry group of a graph is finitely presented}, 
states that the symmetry group  $\MCG (S^3 , \Gamma) $ 
is finitely presented for an arbitrary spatial graph $\Gamma$ in $S^3$. 
This is a generalization of a known result for knots to spatial graphs.  
From Section \ref{sec:The symmetry groups of reducible handlebody-knots of genus two} on, 
we consider the symmetry groups of handlebody-knots. 
We note that 
we have $\MCG (S^3, K) \cong \MCG (S^3 , N(K))$ 
for a knot $K \subset S^3$ by the uniqueness of its regular neighborhood $N(K)$. 
This implies that the theory of the symmetry groups of handlebody-knots 
is another generalization of that of knots. 
In Section \ref{sec:The symmetry groups of reducible handlebody-knots of genus two}, 
we consider the group $\MCG(S^3, V)$,  
where $V$ is a reducible handlebody-knot of genus two. 
We define the complex $\mathcal{P}(V)$ of primitive disks, 
which is a subcomplex of the disk
complex of a handlebody $V$. 
The construction is similar in spirit to the primitive disk complex for the 
genus two handlebody standardly embedded in $S^3$ defined by Cho \cite{Cho08}. 
We prove that the complex
$\mathcal{P}(V)$ is contractible and 1-dimensional, i.e. a tree, 
on which $\MCG(S^3, V)$ acts simplicially. 
We then conclude in Theorem 
\ref{thm:symmetry group of reducible handlebody-knots of genus two} 
that the symmetry group of a reducible handlebody-knot of genus two 
is finitely presented, using the Bass-Serre theory \cite{Ser77} of groups acting on graphs.  
It is worth noting that 
Cho \cite{Cho13} recently proved that the primitive disk complex of 
the unique genus two 
Heegaard splitting of a lens space is 
1-dimensional or 2-dimensional, 
depending on the lens space. 
In Section \ref{sec:The symmetry groups of irreducible handlebody-knots of genus two}, 
we provide a sufficient condition for the symmetry group of 
an irreducible handlebody-knot of genus two 
to be finite. 
It is highly expected that 
the symmetry groups of all irreducible handlebody-knots of genus two 
are finitely presented. 
We prove that ``most" of them are, in fact, finite groups, however, 
the result in the section is far from being conclusive. 

\vspace{1em}

Throughout this paper, we will work in the piecewise linear category. 

\begin{notation}
Let $X$ be a subset of a given polyhedral space $Y$. 
Throughout the paper, we will denote the interior of 
$X$ by $\Int X$ 
and the number of components of $X$ by $\# X$. 
We will use $N(X; Y)$ to denote a closed regular neighborhood of $X$ in $Y$. 
If the ambient space $Y$ is clear from the context, 
we denote it briefly by $N(X)$. 
By a $3$-manifold we always mean one that is connected and compact, 
unless otherwise mentioned. 
Let $M$ be a 3-manifold. 
Let $L \subset M$ be a submanifold with or without boundary. 
When $L$ is 1 or 2-dimensional, we write 
$E(L) = M \setminus \Int N(L)$. 
When $L$ is of 3-dimension, we write 
$E(L) = M \setminus \Int L$. 
\end{notation}

\section{Preliminaries}
\label{sec:Preliminaries}

Let $N_1, N_2 , \ldots, N_n, L$ be possibly empty subspaces 
of a compact orientable 3-manifold $M$. 
We will denote by $\Aut (M, N_1, N_2 , \ldots, N_n ~\rel~ L )$ 
the group of 
automorphisms of $M$ which {map} $N_i$ onto 
$N_i$ for any $i=1 , 2 , \ldots , n$ and 
which {are} {the} identity on $L$. 
The {\it mapping class group}, denoted by 
$\MCG (M , N_1 , N_2 , \ldots, N_n ~\rel~ L )$, 
is defined to be the group of isotopy classes of 
elements of $\Aut (M , N_1 , N_2 , \ldots, N_n ~\rel~ L )$. 
When $L = \emptyset$, we will drop $\rel~L$. 
Given an element $f$ of the group of 
automorphisms $\Aut (M, N_1, N_2 , \ldots, N_n  ~\rel~ L )$, 
we use the same letter $f$ for 
its isotopy class by abuse of notation.  
The ``plus" subscripts, for instance in 
$\Aut_+ (M, N_1, N_2 , \ldots, N_n  ~\rel~ L )$ and 
$\MCG_+ (M, N_1, N_2 , \ldots, N_n ~\rel~ L )$, 
indicate the subgroups of 
$\Aut (M, N_1, N_2 , \ldots, N_n  ~\rel~ L )$ and 
$\MCG (M, N_1, N_2 , \ldots, N_n ~\rel~ L )$, 
respectively, consisting of 
orientation-preserving automorphisms 
(or their classes) of $M$. 	 
   
Let $N$ be a compact 3-manifold 
with boundary embedded in a compact orientable 3-manifold $M$. 
An essential disk $D$ in $N$ is said to be {\it primitive} 
if there exists an essential disk $E$ in $E(N)$ such that 
$\partial D$ and $\partial E$ have 
a single transverse intersection in $\partial N$. 
A pair of disjoint, non-isotopic primitive disks in $N$ 
is called a {\it primitive pair} in $N$.  

\subsection{Handlebody-knots}
\label{sec:Handlebody-knots}

We first review the following well-known fact about groups.  
\begin{lemma} 
\label{lem:exact sequence and finite presentation}
Let $1 \to G^\prime \to G \to G^{\prime \prime} \to 1$ 
be an exact sequence of groups.
If $G^\prime$ and $G^{\prime \prime}$ 
are finitely presented, so is $G$. 
In particular, for possibly empty subspaces 
$N_1, N_2 , \ldots, N_n$ 
of a compact orientable $3$-manifold $M$, 
the group $\MCG (M, N_1, N_2 , \ldots, N_n )$ 
is finitely presented if and only if 
so is $\MCG_+ (M, N_1, N_2 , \ldots, N_n )$. 
\end{lemma}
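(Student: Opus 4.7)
The plan is to prove the two parts of the lemma in turn. For the first part, I would pick finite presentations $G' = \langle X' \mid R' \rangle$ and $G'' = \langle X'' \mid R'' \rangle$, and for each $x \in X''$ choose a preimage $\tilde{x} \in G$ under the surjection $G \to G''$, giving a finite lift set $\tilde{X}''$. The set $X' \cup \tilde{X}''$ generates $G$: for any $g \in G$, write its image in $G''$ as a word $w(X'')$; then $g \cdot w(\tilde{X}'')^{-1}$ lies in the kernel $G'$, hence is a word in $X'$.

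To exhibit a finite presentation of $G$ on these generators, I would add three families of relations. First, the relations $R'$ are retained. Second, for each $r \in R''$, the element $r(\tilde{X}'')$ maps to $1$ in $G''$, so lies in $G'$ and can be expressed as some explicit finite word $u_r(X')$, yielding the relation $r(\tilde{X}'') = u_r(X')$. Third, since $G'$ is normal in $G$, for each pair $(\tilde{x}, x') \in \tilde{X}'' \times X'$, the conjugate $\tilde{x}\, x' \tilde{x}^{-1}$ lies in $G'$ and can be expressed as a finite word $v_{\tilde{x}, x'}(X')$, giving a relation. A standard normal form argument (push all $\tilde{X}''$-letters to one side using the conjugation relations, reduce in $G''$ using $R''$-relations, and reduce the remaining $X'$-word using $R'$-relations) then shows these finitely many relations suffice.

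For the ``in particular'' statement, one has the short exact sequence
\[
1 \longrightarrow \Aut_+(M, N_1, \ldots, N_n) \longrightarrow \Aut(M, N_1, \ldots, N_n) \longrightarrow Q \longrightarrow 1,
\]
where $Q$ is a subgroup of $\Integer/2\Integer$ (trivial or of order two, depending on whether an orientation-reversing automorphism preserving the $N_i$ exists). Since $Q$ is finite, hence finitely presented, the first part immediately gives the implication ``$\Aut_+$ finitely presented $\Rightarrow$ $\Aut$ finitely presented''. For the converse I would invoke the Reidemeister--Schreier theorem, which guarantees that any finite-index subgroup of a finitely presented group is itself finitely presented.

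There is no genuine obstacle: the statement is a classical piece of combinatorial group theory. The only ingredient with any depth is the Reidemeister--Schreier rewriting used in the converse direction of the second statement; the forward direction and the first part amount to careful bookkeeping of lifts and relations.
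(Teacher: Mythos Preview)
Your proposal is correct and follows essentially the same approach as the paper: the paper simply declares the first assertion ``well-known'' without details, and for the particular case invokes exactly the short exact sequence you wrote together with the fact that finite-index subgroups of finitely presented groups are finitely presented. Your explicit lift-and-rewrite argument for the first part and your appeal to Reidemeister--Schreier for the converse merely fill in the details the paper omits.
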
 
\begin{proof}
The first assertion is well-known, and 
the particular case is also clear from the existence of 
the following exact sequence:  
\begin{eqnarray*}
1 &\to& \MCG_+ (M, N_1, N_2 , \ldots, N_n ) \to 
\MCG (M, N_1, N_2 , \ldots, N_n ) \\ 
&\to& 
\MCG (M, N_1, N_2 , \ldots, N_n ) / 
\MCG_+ (M, N_1, N_2 , \ldots, N_n ) \to 1 
\end{eqnarray*}
and the fact that subgroups of 
finite index in finitely presented groups are finitely presented, 
see e.g. Sims \cite{Sim94}.  
\end{proof}

From now on, we will denote by $(S^3, V)$ 
a handlebody-knot $V$ in $S^3$. 
The {\it genus} of a handlebody-knot $(S^3, V)$ is 
defined to be the genus of the handlebody $V$. 
A handlebody-knot $(S^3, V)$ is said to be {\it trivial} 
if the exterior $E(V)$ is also a handlebody.  
It is equivalent to say that 
a handlebody-knot $(S^3, V)$ is trivial 
if and only if the pair 
$(V, E(V))$ determines a Heegaard splitting of 
$S^3$. 
By Waldhausen \cite{Wal68} there exists a unique 
trivial handlebody-knot of a given genus in $S^3$ up to isotopy. 
A 2-sphere $P$ in $S^3$ is called a 
{\it reducing sphere} for a 
handlebody-knot $(S^3, V)$ if 
$P$ intersects $V$ in a single essential disk. 
We note that $V \cap P$ and $E(V) \cap P$ are 
essential separating disks
in $V$ and $E(V)$ respectively. 
A handlebody-knot $(S^3, V)$ is said to be {\it reducible} 
if there exists a reducing sphere for $(S^3, V)$. 
Otherwise, $(S^3, V)$ is said to be {\it irreducible}. 
It is proved in Benedetti-Frigerio \cite{BF13} that a handlebody-knot $(S^3, V)$ 
of genus two is irreducible if and only if its exterior 
$E(V)$ is boundary-irreducible, i.e. 
$\partial E(V)$ is incompressible in $E(V)$. 
See e.g. Ishii \cite{Ish08}, Ishii-Kishimoto-Moriuchi-Suzuki \cite{IKMS12} and the references given there 
for more details. 


\subsection{Relation between the groups $\MCG_+(S^3, V)$ and $\MCG_+(E(V))$.}
\label{sec:Relation between the groups MCG+(S3, V) and MCG+(E(V)).}

Let $(S^3, V)$ be a handlebody-knot. 
\begin{lemma}
\label{lem:injectivity of the map from MCG_+(S3, V) to MCG+(E(V))}
The natural homomorphism from $\MCG_+(S^3, V)$ to $\MCG_+(E(V))$ 
which takes $f \in \MCG_+(S^3, V)$ to
$f|_{E(V)} \in \MCG_+ (E(V))$ is an injection. 
\end{lemma}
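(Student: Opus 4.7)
The plan is to take an element $[f]\in\MCG_+(S^3,V)$ that maps to the identity in $\MCG_+(E(V))$, reduce $f$ by an ambient isotopy of the pair $(S^3,V)$ to an automorphism that is the identity on $E(V)$, and then appeal to the classical fact that the mapping class group of a handlebody relative to its boundary is trivial.

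First, I would pick a representative $f\in\Aut_+(S^3,V)$. By assumption there is an isotopy $\{\varphi_t\}_{t\in[0,1]}$ of $E(V)$ with $\varphi_0=f|_{E(V)}$ and $\varphi_1=\id_{E(V)}$. Using a collar neighborhood of $\partial V=\partial E(V)$ in $V$, this isotopy can be extended to an ambient isotopy $\{\Phi_t\}$ of $S^3$ that preserves $V$ setwise (the extension is built by interpolating $\varphi_t$ with the identity across the collar, then extending by the identity deeper into $V$). Replacing $f$ by $\Phi_1\circ f$ in its isotopy class in $\Aut_+(S^3,V)$, I may assume $f|_{E(V)}=\id_{E(V)}$. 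In particular $f|_{\partial V}=\id$.

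Next, $f|_V$ is an orientation-preserving automorphism of the handlebody $V$ that restricts to the identity on $\partial V$. It is a classical result (provable by induction on the genus via a complete meridian disk system: cut $V$ along a primitive disk system into a ball, use the Alexander trick on the ball rel boundary, and verify the pieces can be reassembled consistently) that every orientation-preserving automorphism of a handlebody that fixes the boundary pointwise is isotopic to the identity rel $\partial V$. Applying this to $f|_V$, I obtain an isotopy of $f|_V$ to $\id_V$ rel $\partial V$.

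Finally, this isotopy, extended by the identity on $E(V)$, gives an ambient isotopy of $S^3$ through automorphisms in $\Aut_+(S^3,V)$ from $f$ to $\id_{S^3}$, so $[f]$ is trivial in $\MCG_+(S^3,V)$, as required. The two steps that carry the real content are the isotopy-extension step (straightforward once one works in a collar of $\partial V$) and the appeal to $\MCG_+(V\,\rel\,\partial V)=1$; the latter is the only substantive input, and is where I would be most careful to cite the classical reference rather than to reprove it.
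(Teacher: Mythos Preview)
Your proof is correct and follows essentially the same approach as the paper: both reduce to the classical fact that $\MCG_+(V~\rel~\partial V)=1$ for a handlebody $V$ (the paper simply cites this, e.g.\ Theorem~3.7 of \cite{FM97}, while you sketch the meridian-disk plus Alexander-trick argument and make the isotopy-extension step explicit). One cosmetic point: with your conventions $\Phi_1|_{E(V)}=\id$, so the replacement should be $f\mapsto \Phi_1\circ\Phi_0^{-1}\circ f$ (or, equivalently, extend $\varphi_t\circ\varphi_0^{-1}$ instead of $\varphi_t$) to get identity on $E(V)$---but this is a trivial bookkeeping fix and does not affect the argument.
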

\begin{proof}
The result follows immediately from the well-known fact 
that $\MCG(V ~\rel~ \partial V)$ is the trivial group, 
see e.g. Fomenko-Matveev \cite[Theorem 3.7]{FM97}. 
\end{proof}

The mapping class group $\MCG_+(E(V))$ is finitely presented 
because of the following theorem. 

\begin{theorem}[McCullough \cite{McC91}]
\label{thm:finitely presentability by McCullough}
Mapping class groups of irreducible 
orientable sufficiently large 
$3$-manifolds $($possibly not 
boundary-irreducible$)$ are finitely presented. 
\end{theorem}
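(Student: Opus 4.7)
The plan is to reduce to the case where $\partial M$ is incompressible, where finite presentability of $\MCG(M)$ is classical via the Haken hierarchy, and then dispose of compressible boundary by splitting off a characteristic compression body.

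First I would handle the $\partial$-irreducible case. When $\partial M$ is incompressible in $M$, the manifold is Haken in the strong sense, so it admits a hierarchy $M = M_0 \supset M_1 \supset \cdots \supset M_n$ obtained by successively cutting along essential surfaces and terminating in a disjoint union of $3$-balls. Waldhausen's isotopy uniqueness theorem guarantees that every isotopy class in $\MCG(M)$ has a representative preserving the first cutting surface, so $\MCG(M)$ fits into an extension of $\MCG(M_1)$ by a ``gluing'' subgroup generated by Dehn twists along the cut surface. Induction on the length of the hierarchy, combined with Lemma \ref{lem:exact sequence and finite presentation}, delivers finite presentability in this case.

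For the general case with compressible boundary, I would invoke Bonahon's theorem to fix a \emph{characteristic compression body} $W \subset M$ with $\partial_+ W = \partial M$ such that $M' := \overline{M \setminus W}$ is $\partial$-irreducible. Such a $W$ is unique up to ambient isotopy, so every $[f] \in \MCG(M)$ has a representative preserving $W$, and restriction then yields a homomorphism
\[
\Phi \colon \MCG(M) \longrightarrow \MCG(W) \times \MCG(M'),
\]
whose image is the subgroup of pairs that are compatible on the frontier surface $S = \partial_- W$, and whose kernel is generated by Dehn twists along $S$, hence is finitely generated abelian. The factor $\MCG(M')$ is finitely presented by the first step, while $\MCG(W)$ is finitely presented because a compression body's mapping class group acts cocompactly on its contractible disk complex with finitely presented stabilizers, generalizing the handlebody argument. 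Applying Lemma \ref{lem:exact sequence and finite presentation} to
\[
1 \longrightarrow \Ker \Phi \longrightarrow \MCG(M) \longrightarrow \Image \Phi \longrightarrow 1
\]
then finishes the proof.

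The main obstacle is the analysis of $\Phi$ in the compressible case. One must verify that the gluing compatibility condition cuts out a finitely presented subgroup of $\MCG(W) \times \MCG(M')$, and that the Dehn twists along the frontier $S$ really give a finitely generated kernel in $\MCG(M)$ rather than an infinitely generated group hidden in subtle ambient isotopies. Both points rest on Bonahon's isotopy uniqueness for the characteristic compression body and on a careful analysis of the mapping class group of $W$ via its action on meridian disk systems.
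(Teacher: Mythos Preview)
The paper does not prove this theorem: it is quoted from McCullough \cite{McC91} and used as a black box, so there is no in-paper argument to compare against. I therefore assess your sketch on its own merits.

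Your step for the $\partial$-irreducible case has a genuine gap. You write that ``Waldhausen's isotopy uniqueness theorem guarantees that every isotopy class in $\MCG(M)$ has a representative preserving the first cutting surface.'' This is not what Waldhausen's theorem says: that theorem asserts that homotopic homeomorphisms of a Haken manifold are isotopic, but it gives no reason for an arbitrary essential surface to be invariant under homeomorphisms. A generic incompressible surface chosen as the first step of a hierarchy will typically be carried by a mapping class to some other, non-isotopic incompressible surface, so there is no extension of the shape $1 \to (\text{twists}) \to \MCG(M) \to \MCG(M_1) \to 1$ on which to induct. What actually drives McCullough's argument is not a hierarchy but the \emph{characteristic submanifold} (the JSJ decomposition of Jaco--Shalen and Johannson), which \emph{is} unique up to isotopy and hence genuinely preserved by every mapping class; the problem then reduces to Seifert-fibered and $I$-bundle pieces, whose mapping class groups can be analyzed directly.

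Your treatment of compressible boundary via Bonahon's characteristic compression body is the right starting point and is in the spirit of McCullough's strategy, but the obstacles you flag at the end are real and not disposed of. Showing that $\Image\Phi$ is finitely presented requires more than knowing the two factors are, since a subgroup of a finitely presented group need not be finitely presented; and the finite presentability of $\MCG(W)$ for a compression body $W$ does not follow from the reason you give, as the action on the disk complex is not cocompact. Both points require the more delicate machinery developed in \cite{McC91}.
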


Following this theorem, Lemma 
\ref{lem:injectivity of the map from MCG_+(S3, V) to MCG+(E(V))} 
implies that the symmetry group $\MCG_+(S^3, V)$ can be 
regarded as a subgroup of the finitely presented group $\MCG_+(E(V))$. 
However, a subgroup of a finitely generated (presented, respectively) 
group is not always finitely generated (presented, respectively). 

\subsection{Symmetry groups of knots}
\label{sec:Symmetry groups of knots}

Let $K$ be a knot in the 3-sphere $S^3$. 
Then the group $\MCG (S^3 , K) $ has been called the  
{\it symmetry group} 
of the knot $K$ and often denoted by 
$\Sym (S^3, K)$. 
By the uniqueness of regular neighborhoods, 
we have $\Sym (S^3, K) \cong \MCG (S^3 , N(K))$. 

The following proposition is a direct 
consequence of Gordon-Luecke \cite{GL89} 
that states that if two knots in $S^3$
have homeomorphic complements then the homeomorphism between the
two complements extends to an automorphism of $S^3$.

\begin{proposition}
\label{lem:isomorphism in the case of knots}
For a non-trivial knot $K$ in $S^3$, 
there exist the following natural isomorphisms$:$ 
\[ \MCG (S^3 , N(K)) \cong \MCG(E(K)) ,~ 
\MCG_+ (S^3 , N(K)) \cong \MCG_+(E(K)). \] 
\end{proposition}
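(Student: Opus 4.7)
The plan is to define the natural homomorphism
\[ \Phi : \MCG(S^3, N(K)) \to \MCG(E(K)), \quad \Phi([f]) = [f|_{E(K)}], \]
and show it is a bijection; the orientation-preserving version will follow by checking that $\Phi$ restricts to the plus subgroups. Well-definedness of $\Phi$ is immediate: if $f_1$ and $f_2$ are isotopic through automorphisms preserving $N(K)$, the isotopy restricts to one between $f_1|_{E(K)}$ and $f_2|_{E(K)}$.

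For injectivity, I would mimic the argument of Lemma \ref{lem:injectivity of the map from MCG_+(S3, V) to MCG+(E(V))}. Suppose $f \in \Aut(S^3, N(K))$ with $f|_{E(K)}$ isotopic to the identity. After pre-composing by the corresponding ambient isotopy (which may be taken to fix $N(K)$ setwise), we may assume $f$ is the identity on $E(K)$, hence on $\partial N(K)$. Since $N(K)$ is a solid torus, the classical fact that any self-homeomorphism of a handlebody fixed on the boundary is isotopic to the identity rel boundary gives $f \simeq \id$ in $\Aut(S^3, N(K))$.

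For surjectivity, this is exactly where the non-triviality of $K$ and the Gordon--Luecke theorem enter. Given $[g] \in \MCG(E(K))$, the Gordon--Luecke theorem \cite{GL89} guarantees that $g$ takes a meridian of $K$ on $\partial E(K)$ to a meridian, so $g$ extends across the meridional Dehn filling to an automorphism $f$ of $S^3$ with $f(K) = K$. By the uniqueness of regular neighborhoods up to ambient isotopy, we may further isotope $f$ so that $f(N(K)) = N(K)$, giving $[f] \in \MCG(S^3, N(K))$ with $\Phi([f]) = [g]$.

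Finally, for the orientation-preserving statement, observe that an automorphism $f$ of $S^3$ preserving $N(K)$ is orientation-preserving on $S^3$ if and only if $f|_{E(K)}$ is orientation-preserving on $E(K)$, since the orientation of $S^3$ is determined by that of the codimension-zero submanifold $E(K)$. Hence $\Phi$ restricts to an isomorphism between the corresponding plus subgroups. The only substantive ingredient is the invocation of Gordon--Luecke; the rest is bookkeeping, so I do not expect any real obstacle.
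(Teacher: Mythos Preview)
Your proof is correct and follows the same route the paper indicates: the paper does not write out a proof but simply declares the proposition a direct consequence of Gordon--Luecke \cite{GL89} (with a reference to \cite{Kaw96}), and your argument supplies precisely the standard details behind that claim---injectivity via the handlebody fact (as in Lemma~\ref{lem:injectivity of the map from MCG_+(S3, V) to MCG+(E(V))}) and surjectivity via meridian preservation from Gordon--Luecke. There is nothing to correct.
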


By Proposition 
\ref{lem:isomorphism in the case of knots} and 
Theorem \ref{thm:finitely presentability by McCullough}, 
we have the following 
(the case of the unknot is clear)$:$
\begin{corollary}
For a knot $K$ in $S^3$, the symmetry group 
$\MCG_+ (S^3 , N(K))$ 
is finitely presented. 
\end{corollary}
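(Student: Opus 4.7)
The plan is to combine the two results cited immediately above. For a non-trivial knot, Proposition~\ref{lem:isomorphism in the case of knots} identifies $\MCG_+(S^3, N(K))$ with $\MCG_+(E(K))$, so the task reduces to verifying that $E(K)$ satisfies the hypotheses of Theorem~\ref{thm:finitely presentability by McCullough}. The trivial knot will be handled by a short separate argument.

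Suppose first that $K$ is non-trivial. Then $E(K)$ is compact and orientable, inherited from $S^3$. It is irreducible: any $2$-sphere in $E(K)$ bounds a $3$-ball in $S^3$ by Alexander's theorem, and since $K$ is connected and knotted, that ball must lie on the side disjoint from $K$, so the sphere already bounds a ball inside $E(K)$. To see that $E(K)$ is sufficiently large, take a minimal-genus Seifert surface $F$ for $K$ and set $\hat F = F \cap E(K)$. Then $\hat F$ is properly embedded, two-sided, and not a disk (because $K$ is non-trivial); by a standard loop-theorem/Dehn-lemma argument any compression would produce a Seifert surface of smaller genus, so $\hat F$ is incompressible. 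Hence $E(K)$ is an irreducible, orientable, sufficiently large $3$-manifold, and Theorem~\ref{thm:finitely presentability by McCullough} yields the finite presentability of $\MCG_+(E(K))$, and thus of $\MCG_+(S^3, N(K))$ by Proposition~\ref{lem:isomorphism in the case of knots}.

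If instead $K$ is the unknot, then $E(K)$ is a solid torus, whose orientation-preserving mapping class group is classically finite. Applying Lemma~\ref{lem:injectivity of the map from MCG_+(S3, V) to MCG+(E(V))} to the genus-one handlebody $V = N(K)$ shows that $\MCG_+(S^3, N(K))$ embeds into $\MCG_+(E(K))$, so it is also finite, hence finitely presented.

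The only non-cosmetic step is confirming that $E(K)$ is Haken (irreducible and sufficiently large) in the non-trivial case; everything else is a direct assembly of the cited theorems, so this verification is where I expect to spend all of the thought in a written-out proof.
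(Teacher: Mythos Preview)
Your approach for the non-trivial case is exactly the paper's: invoke Proposition~\ref{lem:isomorphism in the case of knots} and Theorem~\ref{thm:finitely presentability by McCullough}, with the added virtue that you actually spell out why $E(K)$ is irreducible and sufficiently large. That part is fine.

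The unknot case, however, contains a factual error. The orientation-preserving mapping class group of a solid torus is \emph{not} finite: the Dehn twist along the boundary of a meridian disk acts on $H_1(\partial(D^2\times S^1))$ by $\begin{pmatrix}1&1\\0&1\end{pmatrix}$ and has infinite order. So the sentence ``whose orientation-preserving mapping class group is classically finite'' is false, and the argument as written does not go through. The fix is easy and direct: an element of $\MCG_+(S^3,N(K))$ for the unknot must preserve both the meridian of $N(K)$ and the meridian of $E(K)$ (i.e.\ the longitude of $K$) up to sign, so its restriction to the boundary torus lies in $\{\pm I\}\subset GL_2(\Integer)$. Combined with Lemma~\ref{lem:injectivity of the map from MCG_+(S3, V) to MCG+(E(V))}, this shows $\MCG_+(S^3,N(K))\cong\Integer/2\Integer$, which is certainly finitely presented. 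The paper simply says ``the case of the unknot is clear'', presumably with this direct computation in mind rather than an appeal to $\MCG_+(E(K))$.
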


\subsection{Goeritz group of the trivial handlebody-knot of genus two}
\label{sec:Goeritz group of the trivial handlebody-knot of genus two}

Let $(S^3, V)$ be the trivial handlebody-knot of 
genus $g$. 
Then the group $\MCG_+(S^3, V)$ is called 
the {\it Goeritz group} of genus $g$. 
As mentioned in the Introduction, the case of genus two 
was studied by Goeritz \cite{Goe33}, Scharlemann \cite{Sch04}, 
Akbas \cite{Akb08} and Cho \cite{Cho08}. 
We put these results together in the following theorem. 
\begin{theorem}[\cite{Akb08, Cho08, Goe33, Sch04}]
\label{thm:genus two Goeritz group is finitely presented}
In the case where $(S^3, V)$ is the trivial handlebody-knot of genus two, 
the mapping class group $\MCG_+(S^3, V)$ is finitely presented. 
\end{theorem}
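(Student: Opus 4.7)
The plan is to construct a contractible simplicial complex on which the Goeritz group $\MCG_+(S^3, V)$ acts, and then derive finite presentation from a general theorem on groups acting on simply connected complexes. Specifically, following the strategy already foreshadowed in the introduction (and carried out by Cho in the reference cited), I would define the \emph{primitive disk complex} $\mathcal{P}(V)$ whose vertices are isotopy classes of primitive disks in $V$ and whose $k$-simplices are collections of $k+1$ pairwise disjoint, pairwise non-isotopic primitive disks. Since $V$ is a genus two handlebody, $\mathcal{P}(V)$ is at most two dimensional. The group $\MCG_+(S^3, V)$ acts simplicially on $\mathcal{P}(V)$: primitivity is intrinsic to the pair $(S^3, V)$ and hence preserved by any automorphism of the pair, as are disjointness and non-isotopy.

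The technical heart is to prove that $\mathcal{P}(V)$ is contractible. For this I would run a standard disk-surgery argument in the spirit of Hatcher: given two primitive disks $D, D'$ in $V$ realizing the minimum geometric intersection $|D \cap D'|$ in their isotopy classes, pick an outermost arc of $D \cap D'$ on $D'$ and surger $D$ along the outermost subdisk of $D'$ it cuts off. This produces disks disjoint from $D'$ with strictly fewer intersections, and using the hypothesis that $E(V)$ is a handlebody so that a dual disk for $D$ exists and can be kept track of under surgery, I would verify that at least one of the resulting disks is still primitive. Iterating yields a path in $\mathcal{P}(V)$ from $D$ to $D'$. An analogous surgery argument on pairs and triples of disks, combined with the flag property of $\mathcal{P}(V)$, gives simple connectivity, and in fact one can push this through to contractibility.

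Having a contractible $2$-complex in hand, I would then identify the stabilizers of its cells. The stabilizer of a vertex $D$ in $\MCG_+(S^3, V)$ can be analyzed by cutting $V$ along $D$ and the dual disk in $E(V)$, reducing the stabilizer computation to the mapping class group of a pair consisting of two solid tori glued along a disk, which is readily seen to be finitely presented. The stabilizers of edges and of 2-simplices correspond to cutting along two or three disks and reduce to mapping class groups of (marked) 3-balls, which are finite. A finite list of orbit representatives of cells, together with finitely presented stabilizers, then allows me to apply K.~S.~Brown's presentation theorem for groups acting on simply connected complexes, yielding a finite presentation of $\MCG_+(S^3, V)$.

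The main obstacle I expect is the contractibility of $\mathcal{P}(V)$. Showing that disk-surgery, performed to reduce intersection number, outputs primitive disks rather than merely essential ones is the delicate step: one must track dual disks in $E(V)$ through every surgery, and this is exactly where the hypothesis that both $V$ and $E(V)$ are handlebodies is essential. Once this step is in place, the rest of the argument is a mechanical, if careful, application of the standard group-action machinery.
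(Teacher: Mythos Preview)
The paper does not actually prove this theorem: it is stated as a result from the literature, with the citations \cite{Goe33, Sch04, Akb08, Cho08} carrying the entire burden. The surrounding text explains that Goeritz proved finite generation, Scharlemann gave a modern proof of that, and Akbas and Cho independently obtained finite presentations; the theorem simply packages these facts. So there is no ``paper's own proof'' to compare your proposal against.

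That said, your outline is a faithful sketch of Cho's approach in \cite{Cho08}, which is one of the cited sources. A couple of points worth sharpening: Cho's key surgery lemma (referenced in this paper as Lemma~2.3 of \cite{Cho08}) actually shows that \emph{both} disks produced by outermost surgery on a primitive disk are primitive, not just one; this stronger statement is what makes the contractibility argument go through cleanly via the general criterion that a subcomplex of the disk complex closed under such surgery is contractible. Your description of the vertex stabilizer (``two solid tori glued along a disk'') is not quite the right picture---cutting $S^3$ along a reducing sphere through a primitive disk and its dual gives two $3$-balls each containing an unknotted solid torus---but the conclusion that the stabilizers are finitely presented is correct, and Cho computes them explicitly. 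With those adjustments your plan matches the cited proof.
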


\section{Symmetry groups of spatial graphs}
\label{sec:Symmetry groups of spatial graphs}

Recall that by a graph 
we mean the underlying 
space of a finite simplicial complex of dimension 
at most one. 
A point $v$ in a graph is called 
a {\it vertex} if 
$v$ does not have an open neighborhood that 
is homeomorphic to 
an open interval. 
We denote by $v (\Gamma)$ the set of the 
vertices of a graph $\Gamma$. 
The closure of each component of 
$\Gamma \setminus v(\Gamma)$ is called 
an {\it edge}. 
We denote by $e (\Gamma)$ the set of the 
edges of a graph $\Gamma$. 
Throughout the paper, 
all graphs are assumed to be connected and 
all graphs are assumed not to have
any valency-1 vertices, i.e. any vertex admits no
open neighborhood homeomorphic to $[0,1)$.
As for the handlebody-knots, 
we will denote by $(S^3, \Gamma)$ 
a spatial graph $\Gamma$ in $S^3$.

Let $(S^3, \Gamma)$ be a spatial graph. 
We call the group $\MCG (S^3 , \Gamma) $ the 
{\it symmetry group} of $(S^3, \Gamma)$. 
Note that the subgroup of the simplicial automorphism group 
of $\Gamma$ that is induced by 
homeomorphisms of the pair $(S^3, \Gamma)$ 
is called the 
{\it topological symmetry group} of the spatial graph 
$(S^3, \Gamma)$ and often denoted by $\TSG (S^3, \Gamma)$.  
This group was introduced by Simon \cite{Sim86}. 
See Flapan-Naimi-Pommersheim-Tamvakis \cite{FNPT05} for details. 
Obviously, the group $\TSG (S^3, \Gamma)$ is 
a finite group. 
We note that, if $\Gamma$ is homeomorphic to a circle 
(i.e. if $(S^3, \Gamma)$ is a knot in $S^3$), we have the following$:$
\begin{eqnarray*}
\TSG (S^3, \Gamma) = \left\{ 
\begin{array}{ll}
1 & \mbox{if $(S^3, \Gamma)$ is not invertible},\\
\Integer / 2 \Integer & \mbox{if $(S^3, \Gamma)$ is invertible}.\\
\end{array}
\right.
\end{eqnarray*}

The following proposition is a straightforward consequence of the definitions. 

\begin{lemma}
\label{pro:symmetry groups and topological symmetry groups}
The group $\MCG(S^3 ~\rel~ \Gamma)$ is a normal subgroup of 
the symmetry group $\MCG(S^3, \Gamma)$ and 
$\TSG (S^3, \Gamma) \cong 
\MCG(S^3, \Gamma) / \MCG(S^3 ~\rel~ \Gamma)$. 
Hence $\MCG(S^3, \Gamma) \cong \TSG(S^3, \Gamma)$ if and only if 
$\MCG(S^3 ~\rel~ \Gamma) \cong 1$.  
\end{lemma}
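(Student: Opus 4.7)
The plan is to construct and analyze a short exact sequence
\[
1 \longrightarrow \MCG(S^3 \rel \Gamma) \longrightarrow \MCG(S^3, \Gamma) \xrightarrow{\rho} \TSG(S^3, \Gamma) \longrightarrow 1.
\]
From such a sequence the first assertion is automatic: the kernel of any homomorphism is normal, and the quotient is isomorphic to the image. The biconditional at the end then follows immediately, since $\rho$ is surjective and hence is an isomorphism precisely when its kernel is trivial.

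The map $\rho$ I would define by restriction, $[f] \mapsto f|_\Gamma$. Well-definedness on isotopy classes uses that $\Aut(\Gamma)$ is a finite discrete group; this in turn uses that $\Gamma$ is finite and, by assumption, has no vertices of degree two, so the vertex set is canonically determined by the topology. An isotopy through preserving homeomorphisms induces a continuous path in the discrete group $\Aut(\Gamma)$, which must be constant. Surjectivity of $\rho$ is then built into the very definition of $\TSG(S^3, \Gamma)$ as the image of $\Homeo(S^3, \Gamma)$ in $\Aut(\Gamma)$.

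The main work lies in identifying the kernel with $\MCG(S^3 \rel \Gamma)$. The inclusion $\MCG(S^3 \rel \Gamma) \subset \ker \rho$ is immediate. For the reverse, take $f \in \Aut(S^3, \Gamma)$ with $f|_\Gamma$ trivial in $\Aut(\Gamma)$; then $f$ fixes every vertex and preserves every edge setwise, so for each edge $e$ the restriction $f|_e$ is a self-homeomorphism of an arc (or of a circle, for loop edges) fixing the attaching vertices, hence isotopic through such maps to $\id_e$. I would assemble these edge-level isotopies into an isotopy of $\Gamma$ from $f|_\Gamma$ to $\id_\Gamma$, and then realize it as an ambient isotopy of $S^3$ preserving $\Gamma$, producing a representative of $[f]$ equal to the identity on $\Gamma$.

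The genuine technical point is this final ambient extension, since $\Gamma$ is a $1$-complex rather than a submanifold and the classical isotopy extension theorem does not apply verbatim. I would execute it inside a regular neighborhood $N(\Gamma)$, which is a handlebody: pick small ball neighborhoods of each vertex and a collar structure on each edge matching these balls coherently at the endpoints, thicken the edge-level isotopies to product isotopies on these collars, damp them to the identity near $\partial N(\Gamma)$, and extend by the identity on $E(\Gamma)$. Matching the collars consistently at the vertices is where some care is needed, but with that in hand the rest is routine and the exact sequence above is established.
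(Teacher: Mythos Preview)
Your argument is correct and is precisely the exact-sequence unpacking the paper has in mind: the paper gives no proof at all, remarking only that the lemma is ``straightforward by definition.'' Your construction of the restriction map $\rho$, its surjectivity by the very definition of $\TSG(S^3,\Gamma)$, and the identification of its kernel via an ambient isotopy extension supported in $N(\Gamma)$ is exactly what one writes when asked to spell this out.
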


\subsection{Relation between symmetry groups of spatial graphs and handlebody-knots}
\label{subsec:Symmetry groups of spatial graphs and handlebody-knots}

Let $V$ be a handlebody and let $D_1 , D_2 , \ldots , D_n$ be 
mutually disjoint, mutually non-parallel 
essential disks in $V$. 
Suppose that $V$ cut off by $\bigcup_{i=1}^n D_i$ 
consists of 3-balls. 
Then there exists a graph $\Gamma$ embedded in $\Int \thinspace V$ 
such that 
\begin{itemize}
\item
$\Gamma$ is a deformation retract of $V$, 
\item
$\Gamma$ intersects $\bigcup_{i=1}^n D_i$ 
transversely at points in the interior of the 
edges of $\Gamma$, and   
\item
$\Gamma$ has exactly $n$ edges 
$e_1 , e_2 , \ldots , e_n$ and 
$ \# ( e_i \cap D_j ) = \delta_{ij}$, where 
$\delta_{ij}$ is the Kronecker delta. 
\end{itemize}
We call the above graph $\Gamma$ 
the {\it dual graph} of $\{D_1 , D_2 , \ldots , D_n \}$. 
Note that such a graph is uniquely determined up to isotopy. 


Let $(S^3, \Gamma)$ be a spatial graph. 
Let $\{ D_1 , D_2 , \ldots , D_n \}$ be the family of 
mutually disjoint, mutually non-parallel 
essential disks in $N(\Gamma)$ such that 
$\Gamma$ is the dual graph of 
$\{ D_1 , D_2 , \ldots , D_n \} $. 
It follows from Alexander's trick that 
$\MCG(S^3, \Gamma)$ is isomorphic to 
$\MCG(S^3, N(\Gamma), \bigcup_{i=1}^n D_i)$. 
On the other hand, there is a natural  
injective homomorphism from 
$\MCG(S^3, N(\Gamma), \bigcup_{i=1}^n D_i)$ 
to 
$\MCG(S^3, N(\Gamma))$. 
Consequently, we have the following proposition.

\begin{proposition}
\label{thm:the symmetry group of a spatial graph is a subgroup}
There exists a natural injective  homomorphism 
from $\MCG (S^3 , \Gamma)$ to $\MCG(S^3 , N(\Gamma))$. 
\end{proposition}

Proposition \ref{thm:the symmetry group of a spatial graph is a subgroup} 
implies that the symmetry group of a spatial graph 
can be regarded as a subgroup of the symmetry group of its regular 
neighborhood. 

\subsection{Mapping class groups of $3$-manifolds with boundary-pattern}
\label{subsec:Mapping class groups of 3-manifolds with boundary-pattern}

We review the notion of boundary-pattern defined 
in Johannson \cite{Joh79}. 
Let $M$ be a compact $3$-manifold. 
A {\it boundary-pattern} for $M$ consists of a set 
$\underline{\underline{m}}$ of finitely many 
compact connected surfaces in 
$\partial M$, such that the intersection of any $i$ of them is 
empty or consists of $(3 - i)$-manifolds. 
A boundary-pattern is said to be {\it complete} when 
$\bigcup_{A \in \underline{\underline{m}}} A = \partial M$. 

We denote by $(M, \underline{\underline{m}})$ a 3-manifold 
with a boundary-pattern $\underline{\underline{m}}$. 

A boundary-pattern $\underline{\underline{m}}$ 
of a 3-manifold $M$ is said to be {\it useful} 
if the boundary of any disk $D$ properly embedded in $M$, where 
$\partial D$ intersects $\partial A$ 
transversely for each $A \in \underline{\underline{m}}$ and 
$\# (D \cap ( \bigcup_{A \in \underline{\underline{m}}} 
\partial A)) \leqslant 3$, 
bounds a disk $E$ in $\partial M$ such that 
$E \cap ( \bigcup_{A \in \underline{\underline{m}}} \partial A )$ 
is the cone on 
$\partial E \cap ( \bigcup_{A 
\in \underline{\underline{m}}} \partial A ) $. 
An {\it essential} disk in $(M, \underline{\underline{m}})$ 
is an essential disk $D$ in $M$ such that 
$D \cap ( \bigcup_{A \in \underline{\underline{m}}} 
\partial A ) = \emptyset$.

Let $(M, \underline{\underline{m}})$ a 3-manifold 
with a boundary-pattern, 
where $\underline{\underline{m}} = \{A_1, A_2, \ldots, A_k \}$.
We denote the group 
$\MCG (M , A_1 , A_2 , \ldots , A_k)$ 
by 
$\MCG(M , \underline{\underline{m}})$.  
{\it Dots} of $(M , \underline{\underline{m}})$ are 
pairwise disjoint points $p_1 , p_2 , \ldots , p_l$ in 
$\partial M$ such that 
$p_i \cap ( \bigcup_{A \in \underline{\underline{m}}} 
\partial A ) = \emptyset$. 
We denote the group 
$\MCG (M , A_1 , A_2 , \ldots , A_k , p_1 , p_2 , \ldots , p_l)$ 
by 
$\MCG (M , \underline{\underline{m}} , p_1 , p_2 , \ldots , p_l)$. 
{\it Spots} of $(M , \underline{\underline{m}})$ are 
pairwise disjoint disks $D_1 , D_2 , \ldots , D_k$ in 
$\partial M$ such that 
$D_i \cap ( \bigcup_{A \in \underline{\underline{m}}} 
\partial A ) = \emptyset$. 
We denote the group 
$\MCG (M , A_1 , A_2 , \ldots , A_k ~\rel~  D_1 \cup D_2 \cup \cdots \cup D_l )$  
by 
$\MCG (M , \underline{\underline{m}} ~\rel~  D_1 \cup D_2 \cup \cdots \cup D_l )$.

\begin{lemma}
\label{lem:mapping class group of a 3-manifold with dotted boundary-pattern} 
Let $(M , \underline{\underline{m}})$ be a compact 
$3$-manifold with boundary-pattern such that 
$\partial M \ncong S^1 \times S^1$ and 
$\bigcup_{A \in \underline{\underline{m}}} \partial A$ consists of 
non-contractible circles on $\partial M$. 
Let $p_1$ and $p_2$ be dots of 
$(M , \underline{\underline{m}})$ contained in  
planar components $A_1$ and $A_2$ $($possibly $A_1 = A_2)$ 
of $\underline{\underline{m}}$, respectively. 
Set $D_i = N(p_i ; A_i)$ for $i= 1, 2$. 
Suppose that $\MCG (M , \underline{\underline{m}})$ is 
finitely presented. 
Then all of $\MCG(M , \underline{\underline{m}} , 
p_1)$, $\MCG(M , \underline{\underline{m}} , 
p_1 , p_2)$, 
$\MCG(M , \underline{\underline{m}} ~\rel~ D_1)$ 
and 
$\MCG(M , \underline{\underline{m}} ~ 
\rel ~  D_1 \cup D_2 )$ 
are finitely presented. 
\end{lemma}
\begin{proof}
Let $P_1 : \pi_1(A_1 , p_1) \to \MCG(M , \underline{\underline{m}} , p_1)$ 
be the {\it point-pushing map}. 
That is, let 
$\gamma : [0, 1] \to S$ be a loop in $A_1$ based 
at $p_1$. 
There is an isotopy $\varphi_t : M \to M$ 
supported in a small neighborhood of
the loop $\gamma$ such that $\varphi_0 = \id$, and 
$\varphi_t(p_1) = \gamma(t)$. 
The map $P_1$ is then defined by 
$P_1(\gamma) = \varphi_1$. 
Since $\partial M$ is connected and 
$\partial M$ is neither $S^2$ nor $S^1 \times S^1$, 
$\partial M$ does not admit a non-trivial $S^1$ action. 
Therefore as a variation of the Birman exact sequence \cite{Bir74, HU12}, we have 
the following exact sequence$:$ 
\begin{eqnarray*}
1 \to \pi_1(A_1 , p_1) 
\xrightarrow{ P_1 } \MCG (M , \underline{\underline{m}} , p_1) 
\to \MCG (M , \underline{\underline{m}} ) \to 1 .
\end{eqnarray*}
This sequence proves that $\MCG (M , \underline{\underline{m}} , p_1)$ 
is finitely presented by Lemma \ref{lem:exact sequence and finite presentation}. 

%

If $A_1 \neq A_2 $, then 
it is clear from the above argument that 
$\MCG (M, \underline{\underline{m}} , p_1, p_2 )$ is finitely presented. 
Let $A_1 = A_2 = A$ and $P_2 : \pi_1(A \setminus \{ p_1 \} , p_2) 
\to \MCG(M , \underline{\underline{m}} , p_1 , p_2)$ 
be the point-pushing map. 
Then the Birman exact sequence 
\[ 1 \to 
\pi_1(A \setminus \{ p_1 \} , p_2) \xrightarrow{P_2} 
\MCG (M, \underline{\underline{m}} , p_1, p_2 ) \to 
\MCG (M, \underline{\underline{m}} , p_1)  \to 1 , 
\]
shows that  
$\MCG (M, \underline{\underline{m}} , p_1, p_2 )$ is finitely presented.

Let $\tau_1 \in \MCG(M , \underline{\underline{m}} ~\rel~ D_1 )$ be the Dehn twist about $D_1$. 
Then we have the following exact sequence$:$ 
\[
1 \to \langle \tau_1 \rangle \to \MCG (M , \underline{\underline{m}}  ~\rel~ D_1) 
\to \MCG (M , \underline{\underline{m}} , p_1) \to 1, 
\]
where the map $ \MCG (M , \underline{\underline{m}} ~\rel~ D_1) 
\to \MCG (M , \underline{\underline{m}} , p_1)$ 
is induced by collapsing the spot $D_1$ to $p_1$. 
This implies that 
$\MCG (M , \underline{\underline{m}}  ~\rel~D_1)$ is finitely presented. 

Finally, let $\tau_i^\prime \in \MCG(M , \underline{\underline{m}} ~\rel~ 
D_1 \cup D_2)$ be the Dehn twist about $D_i$ for $i=1,2$. 
Considering the exact sequence  
\[
1 \to \langle \tau_1 , \tau_2 \rangle \to 
\MCG (M , \underline{\underline{m}}  ~\rel~ D_1 \cup D_2) 
\to \MCG (M , \underline{\underline{m}} , p_1 , p_2) \to 1, 
\]
we see that $\MCG (M , \underline{\underline{m}} ~\rel~ D_1 \cup D_2)$ 
is finitely presented. 
\end{proof}

\begin{lemma}
\label{lem:mapping class group of a 3-manifold 
with dotted boundary-pattern 
whose dots are in a disk or an annulus}
Let $(M , \underline{\underline{m}})$ be a compact 
$3$-manifold with boundary-pattern such that 
$\partial M \cong S^1 \times S^1$ 
and 
$\bigcup_{A \in \underline{\underline{m}}} \partial A$ consists of 
non-contractible circles on $\partial M$. 
Let $p_1$ and $p_2$ be dots of 
$(M , \underline{\underline{m}})$ contained in  
components $A_1$ and $A_2$ $($possibly $A_1 = A_2)$ 
of $\underline{\underline{m}}$, respectively. 
Set $D_i = N(p_i ; A_i)$ for $i= 1, 2$. 
Suppose that $\MCG (M , \underline{\underline{m}})$ is 
finitely presented. 
Then all of $\MCG(M , \underline{\underline{m}} , 
p_1)$, $\MCG(M , \underline{\underline{m}} , 
p_1 , p_2)$, 
$\MCG(M , \underline{\underline{m}} ~\rel~ D_1)$ 
and 
$\MCG(M , \underline{\underline{m}} ~\rel~  D_1 \cup D_2  )$ 
are finitely presented. 
\end{lemma}
\begin{proof}
It is clear that $\underline{\underline{m}}$ consists 
of annuli. 
We give the proof for the case $A_1 = A_2 = A$; 
the other case is left to the reader. 
Fix a generator 
$\gamma$ of the group $\pi_1(A, p_1)$. 
Let $P_1^\prime : \pi_1(A , p_1) 
\to \MCG(M , \underline{\underline{m}} , p_1)$ 
be the point-pushing map. 
The map $P_1^\prime$ induces a map 
$\bar{P}_1^\prime : \pi_1(A , p_1) 
/ \ker P_1^\prime 
\to \MCG (M , \underline{\underline{m}} , p_1)$. 
Then there exists the Birman exact sequence 
\[ 1 \to \pi_1 (A , p_1 ) / 
 \ker P_1^\prime \xrightarrow{\bar{P}_1^\prime} 
\MCG (M, \underline{\underline{m}} , p_1 ) \to 
\MCG (M, \underline{\underline{m}})  \to 1 . 
\]
Since $\pi_1 (A , p_1 ) = \langle \gamma \rangle \cong \Integer$, 
the group $\pi_1 (A, p_1 ) / 
 \ker P_1^\prime$ is finitely presented. 
Hence $\MCG (M , \underline{\underline{m}} , p_1)$ 
is finitely presented by Lemma \ref{lem:exact sequence and finite presentation}. 

Let $P_2^\prime : \pi_1(A \setminus \{ p_1 \} , p_2) 
\to \MCG(M , \underline{\underline{m}} , p_1 , p_2)$ 
be the point-pushing map. 
Then by the Birman exact sequence 
\[ 1 \to \pi_1 (A \setminus \{ p_1 \} , p_2 ) 
\xrightarrow{P_2^\prime} 
\MCG (M, \underline{\underline{m}} , p_1, p_2 ) \to 
\MCG (M, \underline{\underline{m}} , p_1 )  \to 1  
\]
it follows that $\MCG (M , \underline{\underline{m}} , p_1, p_2)$ 
is finitely presented by 
Lemma \ref{lem:exact sequence and finite presentation}. 

The proof for the mapping class group of spotted 3-manifolds with 
boundary-pattern is the same as in Lemma 
\ref{lem:mapping class group of a 3-manifold with dotted boundary-pattern}.  
\end{proof}

\subsection{Type I spheres for spatial graphs}
\label{subsec:Type I spheres for spatial graphs}

Let $(S^3, \Gamma)$ be a spatial graph. 
Let $P$ be a 2-sphere embedded in $S^3$ satisfying$:$
\begin{itemize}
\item
the sphere $P$ intersects $\Gamma$ in a single vertex, and
\item
each of the two
components of $S^3 \setminus P$ contains
non-empty part of $\Gamma$.
\end{itemize}
Then $P$ is called a {\it type I sphere} for $(S^3, \Gamma)$. 
Two Type I spheres $P_1$ and $P_2$ 
are said to be {isotopic} if 
there exists an ambient isotopy fixing $\Gamma$ 
that moves $P_1$ to $P_2$. 
Two Type I spheres $P_1$ and $P_2$ for $(S^3, \Gamma)$ are said to be 
{\it weakly disjoint} if 
$(P_1 \cap P_2) \setminus  \Gamma = \emptyset$.

\begin{lemma}
\label{lem:orbits of vertices}
Let $(S^3, \Gamma)$ be a spatial graph. 
Let $P_1$ and $P_2$ be Type I spheres for $(S^3, \Gamma)$. 
Then there exists an element $f \in \Aut_+(S^3 ~\rel~ \Gamma)$ such that 
$f(P_1) = P_2$ if and only if 
$P_1$ and $P_2$ bound $3$-balls $B_1$ and $B_2$, respectively, so that  
$\thinspace B_1 \cap \Gamma = B_2 \cap \Gamma$. 
\end{lemma}
\begin{proof}
The ``only if" part is trivial. 
Suppose that there exist $3$-balls $B_1$ and $B_2$ 
bounded by $P_1$ and $P_2$, respectively, such that  
$\thinspace B_1 \cap \Gamma = B_2 \cap \Gamma$. 
Set $\Gamma' = B_1 \cap \Gamma$ and $\Gamma'' = E(B_1) \cap \Gamma$. 
Since $P_1$ and $P_2$ are bounding 3-balls $E(B_1)$ and $E(B_2)$, respectively, 
that do not intersect $\Gamma_1$ in their interiors, 
there exists an ambient isotopy in $S^3$ that 
fixes $\Gamma_1$ and that moves $P_1$ to $P_2$. 
This isotopy determines an orientation-preserving homeomorphism 
$f_1: (B_1, \Gamma') \to (B_2, \Gamma')$. 
In the same way, we see that 
there exists an orientation-preserving homeomorphism 
$f_2: (E(B_1), \Gamma'') \to (E(B_2), \Gamma'')$. 
Since the mapping class group of the sphere with one dot 
is the trivial group, we get the required element 
$f \in \Aut_+(S^3 ~\rel~ \Gamma)$ 
from $f_1$ and $f_2$. 
\end{proof}

\begin{lemma}
\label{lem:orbits of edges}
Let $(S^3, \Gamma)$ be a spatial graph. 
For $i=1,2$, let $P_i$ and $P'_i$ be non-isotopic, weakly disjoint 
Type I spheres for $(S^3, \Gamma)$. 
Then there exists an element $f \in \Aut_+(S^3 ~\rel~ \Gamma)$ such that 
$f(P_1) = P_2$ and $f(P'_1) = P'_2$ if and only if 
$P_1$, $P'_1$, $P_2$ and $P'_2$ bound $3$-balls 
$B_1$, $B'_1$, $B_2$ and $B'_2$, respectively, so that 
$B_1 \cap B'_1 = \emptyset$, 
$B_1 \cap \Gamma = B_2 \cap \Gamma$ and $B'_1 \cap \Gamma = B'_2 \cap \Gamma$. 
\end{lemma}
\begin{proof}
The ``only if" part is trivial. 
Suppose that 
$P_1$, $P'_1$, $P_2$ and $P'_2$ bound $3$-balls 
$B_1$, $B'_1$, $B_2$ and $B'_2$, respectively, so that  
$B_1 \cap \Gamma = B_2 \cap \Gamma$ and $B'_1 \cap \Gamma = B'_2 \cap \Gamma$. 
By Lemma \ref{lem:orbits of vertices}, 
there exists an element $g \in \Aut_+(S^3 ~\rel~ \Gamma)$ such that 
$g(B_1) = B_2$. 
Since $B_1$ and $B_2$ are contractible in $S^3$, 
we can again apply an almost the same argument for 
$g(B'_1)$ and $B'_2$ as in the proof of 
Lemma \ref{lem:orbits of vertices} 
to get the required 
element 
$f \in \Aut_+(S^3 ~\rel~ \Gamma)$ such that 
$f(P_1) = P_2$ and $f(P'_1) = P'_2$. 
\end{proof}

Let $P$ and $P'$ be non-isotopic, weakly disjoint 
Type I spheres for a spatial graph $(S^3, \Gamma)$. 
Then $P$ and $P'$ bound 3-balls $B$ and $B'$, respectively, 
so that $\Int \thinspace B_1 \cap \Int \thinspace B_2 = \emptyset$. 
These 3-balls determines 
a partition of the set $e(\Gamma)$ edges of 
$\Gamma$ into three blocks as  
$\{ e \in e(\Gamma) \mid e \subset B_i \}$, 
$\{ e \in e(\Gamma) \mid e \subset B'_i \}$ and 
$\{ e \in e(\Gamma) \mid e \subset E(B_i \cup B'_i) \}$. 
None of the three blocks is empty 
by the definition of Type I spheres and 
the assumption that $P$ and $P'$ are non-isotopic. 
We remark that the partition depends only on 
the pair $(P, P')$. 


\begin{lemma}
\label{lemma:finiteness of orbits}
Let $(S^3, \Gamma)$ be a spatial graph. 
Up to the action of $\Aut_+(S^3 ~ \rel ~ \Gamma)$, 
there exist only finitely many pairs of non-isotopic, 
weakly disjoint Type I spheres for $(S^3, \Gamma)$. 
\end{lemma}
\begin{proof}
For $i=1,2$, let $P_i$ and $P'_i$ be non-isotopic, weakly disjoint 
Type I spheres for $(S^3, \Gamma)$. 
As mentioned above, 
each pair $(P_i , P'_i)$ determines a partition 
of the set of edges of $\Gamma$ into three blocks. 
By Lemma \ref{lem:orbits of edges}, 
the pairs $(P_1, P'_1)$ and $(P_2, P'_2)$ 
coincide up to the action of $\Aut_+(S^3 ~ \rel ~ \Gamma)$ 
if and only if 
the pairs $(P_1 , P'_1)$ and  
$(P_2 , P'_2)$ determine the same partition of the 
edges of $\Gamma$. 
Since there exist only finitely many partitions of the edges of 
$\Gamma$, we get the assertion.  
\end{proof}

\subsection{Finite presentation of the symmetry groups of spatial graphs}
\label{subsec:Finite presentation of the symmetry groups of spatial graphs}

Let $(S^3, K)$ be a knot. 
Let $D_1, D_2, D_3, D_4$ be four mutually disjoint meridian disks of $N(K)$. 
Let $\underline{\underline{m}}$ be the set of 
the closure of each component of 
$\partial N(K)$ cut off by $\bigcup_{i=1}^4 \partial D_i$. 
Then the complete boundary-pattern 
$\underline{\underline{m}}$ satisfies the following$:$ 
\begin{lemma}
\label{lem:the case of the knot complement with boundary-pattern}
$\MCG(E(K) , \underline{\underline{m}})$ is 
finitely presented. 
\end{lemma}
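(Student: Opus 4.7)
The plan is to deduce the lemma from McCullough's theorem \cite{McC91} in its $\partial$-pattern form, of which the statement cited as Theorem \ref{thm:finitely presentability by McCullough} is the $\partial$-pattern-free special case. For $K$ a non-trivial knot, $E(K)$ is an irreducible, sufficiently large $3$-manifold, so this is the setting in which the theorem directly applies; for the unknot, $E(K)$ is a solid torus and $\MCG(E(K), \underline{\underline{m}})$ can be computed by hand, being virtually cyclic.

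The first step is to verify that $\underline{\underline{m}} = \{A_1, A_2, A_3, B\}$ is a useful $\partial$-pattern on $E(K)$. Every component is an incompressible annulus in the boundary torus $\partial E(K)$, and the pairwise intersections consist of the four parallel meridian circles separating them, with no triple intersections, so the pattern is complete. To check usefulness, let $D$ be a properly embedded disk in $E(K)$ whose boundary meets the union of these separating circles transversely in at most three points. For non-trivial $K$ the boundary torus is incompressible, so $\partial D$ bounds a disk $E$ on $\partial E(K)$; an easy case check (using the incompressibility of each sub-annulus inside the torus) then shows that $D \cap (\bigcup_A \partial A)$ is the required cone on $\partial D \cap (\bigcup_A \partial A)$.

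With usefulness established, I would invoke the general form of McCullough's theorem, asserting finite presentability of $\MCG(M, \underline{\underline{m}})$ for any irreducible, sufficiently large $3$-manifold $M$ with useful $\partial$-pattern $\underline{\underline{m}}$; applying this to $(E(K), \underline{\underline{m}})$ finishes the proof. An alternative, more in the spirit of the Birman-style arguments of the preceding lemmas, would be to study the restriction homomorphism $\MCG(E(K), \underline{\underline{m}}) \to \MCG(\partial E(K), \underline{\underline{m}})$: its kernel is $\MCG(E(K) \rel \partial E(K))$, which is finitely presented for the Haken manifold $E(K)$, and its image sits inside the finitely presented group $\MCG(T^2, \underline{\underline{m}})$ of meridian-preserving classes on the boundary torus. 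An exact sequence argument combined with Lemma \ref{lem:exact sequence and finite presentation} would then give the conclusion. The main obstacle in this alternative approach is to pin down the image of the restriction map precisely, by identifying exactly which boundary automorphisms extend over $E(K)$ preserving the pattern; bundling this into the hypotheses of McCullough's theorem is the cleaner route.
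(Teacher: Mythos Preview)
Your approach is correct and is precisely the argument the paper has in mind: the paper actually states this lemma without proof, treating it as an immediate consequence of McCullough's theorem \cite{McC91} in its $\partial$-pattern form (compare the first paragraph of the proof of Lemma~\ref{lem:symmetry group of an irreducible 3-manifold with useful boundary-pattern}, where the same reasoning is spelled out for the $\partial$-irreducible case). Your verification that $\underline{\underline{m}}$ is complete and useful, together with the separate hand computation for the unknot, fills in exactly what the paper leaves implicit; the alternative Birman-style route you sketch is sound but, as you note, less clean than simply invoking \cite{McC91}.
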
 
\begin{proof}
It is clear that $\MCG (E(K), \underline{\underline{m}})$ is 
isomorphic to $\MCG (E(K) ~\rel~\partial E(K))$. 
Hence the assertion follows from McCullough \cite{McC91}. 
\end{proof} 

Let $(S^3, \Gamma)$ be a spatial graph. 
Set $V = N(\Gamma)$. 
Let 
\[
\Delta = \{ D_1 , D_2 , \ldots , D_{n_1}, 
D_1^\prime , D_2^\prime , \ldots , D_{n_2}^\prime, 
D_1^{\prime \prime} , D_2^{\prime \prime} , \ldots , D_{n_3}^{\prime\prime} 
 \} 
\] 
be the family of 
essential disks in $V$ such that 
\begin{itemize}
\item 
$\Gamma$ is the dual graph of $\Delta$, 
\item
$\{D_1, D_2, \ldots , D_{n_1}\}$ corresponds to the set of 
loops of $\Gamma$, 
\item
$\{D_1^\prime, D_2^\prime, \ldots , D_{n_2}^\prime\}$ 
corresponds to the set of cut-edges of $\Gamma$. 
\item
$\{D''_1, D''_2, \ldots , D''_{n_3} \}$  
corresponds to the set of edges of $\Gamma$ 
each of which is neither a loop nor a cut-edge of $\Gamma$. 
\end{itemize}
For each $1 \leqslant i \leqslant n_1$, 
we take mutually disjoint parallel copies 
$D_{i,1}$, $D_{i,2}$, $D_{i,3}$, $D_{i,4}$ of $D_i$, and 
for each $1 \leqslant i \leqslant n_3$, 
we take disjoint parallel copies 
$D_{i,1}^{\prime \prime}$, $D_{i,2}^{\prime \prime}$ 
of $D_i^{\prime \prime}$.  
Let 
\[ \Delta^\prime = 
\{ D_{i, j} \mid 1 \leqslant i \leqslant n_1, j= 1,2,3,4 \}
\cup 
\{ D_1^\prime, D_2^\prime, \ldots, D_{n_2}^\prime \} 
\cup \{ D_{i, j}^{\prime \prime} \mid 
1 \leqslant i \leqslant n_3, j= 1,2 \} .
\] 
Let $\underline{\underline{m}}$ be the set of 
components of 
$\partial N(\Gamma)$ cut off by $\bigcup_{D \in \Delta^\prime} \partial D$. 
See Figure \ref{fig:graph_and_boundary-pattern}.  
\begin{figure}[htbp]
\begin{center}
\includegraphics[width=12cm,clip]{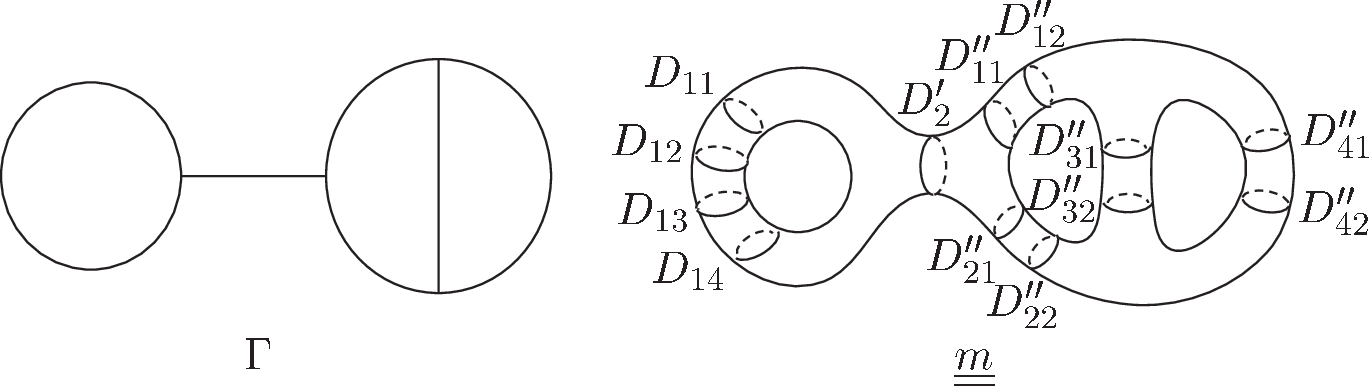}
\caption{A spatial graph $\Gamma$ and the corresponding 
boundary-pattern $\underline{\underline{m}}$.}
\label{fig:graph_and_boundary-pattern}
\end{center}
\end{figure}
Then $\underline{\underline{m}}$ 
is a complete and useful boundary-pattern of 
$E(\Gamma) = E(V)$ if there is no essential disk in 
$(E(\Gamma), \underline{\underline{m}})$.



\begin{lemma}
\label{lem:symmetry group of an irreducible 
3-manifold with useful boundary-pattern}
$\MCG(E(\Gamma), \underline{\underline{m}})$ 
is finitely presented. 
\end{lemma}

The following theorem by Brown plays a key role in 
the proof of Lemma \ref{lem:symmetry group of an irreducible 
3-manifold with useful boundary-pattern}.  
\begin{theorem}[Brown \cite{Bro84}]
\label{thm:theorem by Brown}
Suppose that a group $G$ acts cellularly on a contractible CW-complex
$\mathcal{K}$. 
If the $2$-skeleton of $\mathcal{K}/G$ is finite, 
the stabilizer of each $0$-cell 
of $\mathcal{K}$ is finitely presented, 
and the stabilizer of each $1$-cell of $\mathcal{K}$ is finitely
generated, then $G$ is finitely presented. 
\end{theorem}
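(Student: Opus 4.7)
The plan is to prove Brown's theorem by constructing an explicit finite presentation of $G$ from the equivariant cell structure on $\mathcal{K}$, following the general strategy that extends Bass--Serre theory from trees to higher-dimensional simply connected complexes. The contractibility hypothesis is used only through simple connectivity of $\mathcal{K}$, while the finiteness of the $2$-skeleton of $\mathcal{K}/G$ supplies the combinatorial bookkeeping needed to ensure that both the generating set and the relation set are finite.

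First I would select a fundamental domain for the action in low dimensions. Choose a connected subcomplex $T \subset \mathcal{K}^{(1)}$ whose image in $\mathcal{K}/G$ is a spanning tree of $(\mathcal{K}/G)^{(1)}$; then pick one orbit representative $1$-cell for each edge of $\mathcal{K}/G$ not in the spanning tree, arranged so that one endpoint already lies in $T$, and one orbit representative $2$-cell for each $2$-cell of $\mathcal{K}/G$. By the hypothesis on the $2$-skeleton of the quotient, all of these collections are finite.

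Next I would write down candidate generators and relations. The generators are (a) a finite set of generators for the stabilizer $G_v$ for each vertex $v$ of $T$, which is finite in total because $T$ has finitely many vertices and each $G_v$ is finitely presented, and (b) for each non-tree orbit representative $1$-cell $e$ with endpoints $v_0, v_1$ and $v_0 \in T$, a single transition element $t_e \in G$ chosen so that $t_e \cdot v_1$ lies in $T$. The relations are (a) the relations of each vertex stabilizer $G_v$, (b) for each orbit representative $1$-cell $e$ and each generator $h$ of the edge stabilizer $G_e$, a single compatibility relation identifying $h$ with the corresponding element of $G_{v_0}$ and with the conjugate by $t_e$ of the corresponding element of $G_{t_e \cdot v_1}$, and (c) for each orbit representative $2$-cell $\sigma$, a single boundary relation obtained by reading off the attaching map along $\partial \sigma$ as a word in the transition elements and the vertex stabilizer generators. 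Each of the three classes of relations is finite, because vertex stabilizers are finitely presented, edge stabilizers are finitely generated, and the $2$-skeleton of $\mathcal{K}/G$ is finite.

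The main obstacle, and the heart of Brown's argument, is verifying that this presentation actually defines $G$. Generation follows fairly directly from connectedness of $\mathcal{K}^{(1)}$: any $g \in G$ can be decomposed by choosing a $1$-skeleton path from a basepoint $v \in T$ to $gv$ and resolving each edge crossing into either a vertex stabilizer generator or a transition element, giving a word in the chosen generators equal to $g$. Completeness of the relations is the delicate point and is precisely where simple connectivity of $\mathcal{K}$ is used: an identity among the generators translates into a closed edge-path in $\mathcal{K}^{(1)}$ that bounds in $\mathcal{K}$, and by cellular approximation any such nullhomotopy decomposes as a finite product of $G$-translates of $2$-cell boundaries, each of which is killed by a relation of type (c) modulo types (a) and (b). Carrying out this verification carefully is the technical core of the proof in \cite{Bro84}, and I would invoke its conclusion rather than reproduce the full combinatorial argument here.
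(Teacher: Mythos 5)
The paper offers no proof of this statement: it is imported verbatim (modulo the typo $\mathcal{T}/G$ for $\mathcal{K}/G$) as a citation to \cite{Bro84}, so there is no internal argument to compare yours against. Your outline is a faithful sketch of what Brown actually does --- the generalization of the Bass--Serre presentation from trees to simply connected complexes, with generators drawn from vertex stabilizers and transition elements, and relations of the three types you list --- and your observation that contractibility is only used through simple connectivity is correct. Since you ultimately defer the completeness-of-relations verification to \cite{Bro84}, your proposal ends up resting on the same citation the paper does, which is acceptable here; a self-contained proof would additionally have to address two points you pass over: the possibility that a group element inverts a $1$-cell (which forces either a subdivision or extra relations of the form $t_e^2 \in G_{v_0}$), and the fact that attaching maps of $2$-cells in a general CW-complex need not be edge-paths, so the type-(c) relations require a cellular-approximation or combinatorial-replacement step before they can be ``read off'' as words in the generators. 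Neither issue invalidates your outline, but both are part of the technical core that Brown's paper supplies.
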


\begin{proof}[Proof of Lemma~$\ref{lem:symmetry group of an irreducible 
3-manifold with useful boundary-pattern}$]
Suppose that $E(\Gamma)$ is boundary-irreducible. 
Then $E(\Gamma)$ is a compact, 
irreducible, sufficiently large 3-manifold and 
the boundary-pattern $\underline{\underline{m}}$ of $E(\Gamma)$ 
defined by the system of disks 
\[ \{ D_{i, j} \mid 1 \leqslant i \leqslant n_1, j= 1,2,3,4 \}
\cup 
\{ D_1^\prime, D_2^\prime, \ldots, D_{n_2}^\prime \} 
\cup \{ D_{i, j}^{\prime \prime} \mid 
1 \leqslant i \leqslant n_3, j= 1,2 \} 
\]
in $N(\Gamma)$ as explained just before the proof is useful.  
Hence it is straightforward from McCullough \cite{McC91} that  
$\MCG(E(\Gamma), \underline{\underline{m}})$ 
is finitely presented. 

We now turn to the case where 
$\partial E(\Gamma)$ is compressible 
in $E(\Gamma)$. 
In the following, we include the case where 
$\Gamma$ consists of a single vertex and 
a single loop. 
If $\underline{\underline{m}}$ is still a useful 
boundary-pattern of $E(\Gamma)$, 
then we are done. 
Suppose that there exists a disk $D$ 
properly embedded in $E(\Gamma)$ such that 
$\partial D$ intersects $\partial A$ 
transversely for each $A \in \underline{\underline{m}}$ 
and $\# ( D \cap \bigcup_{A \in \underline{\underline{m}}} 
\partial A) \leqslant 3$. 
Then it is clear from the definition of $\underline{\underline{m}}$ 
that $D$ can be isotoped so that 
$\# ( D \cap \bigcup_{A \in \underline{\underline{m}}} 
\partial A) = 0$. 

The proof is an induction of the genus of $N(\Gamma)$. 
We proved in Lemma 
\ref{lem:the case of the knot complement with boundary-pattern} 
the case where 
the genus of $N(\Gamma)$ is one. 
Assume that the genus $g$ of $N(\Gamma)$ is greater than one 
and that the 
assertion holds for graphs whose regular neighborhoods have 
genus less than $g$. 

We denote by $\mathcal{Z}$ the 
simplicial complex such that 
\begin{itemize}
\item
the set of vertices of $\mathcal{Z}$ 
consists of the isotopy classes of essential disks 
in $(E(\Gamma), \underline{\underline{m}})$, 
\item 
a collection of $k+1$ vertices spans a
$k$-simplex if and only if they admit a 
set of pairwise-disjoint representatives. 
\end{itemize}
Note that $\mathcal{Z}$ is a subcomplex 
of the {\it disk complex} of $E(\Gamma)$,  
see McCullough \cite{McC91} for the definition of disk complexes. 
We remark that 
each annulus of $\underline{\underline{m}}$ is 
incompressible in $E(\Gamma)$ by the definition of 
$\underline{\underline{m}}$. 
This implies that if $E$ is an essential disk in 
$\underline{\underline{m}}$, $\partial E$ 
is contained in a non-annular element of $\underline{\underline{m}})$. 

Let $E_1$ and $E_2$ be distinct essential 
disks in $(E(\Gamma), \underline{\underline{m}})$. 
Assume that 
$\#  ( E_1 \cap E_2 )$ is minimal up to isotopy. 
Suppose that $E_1 \cap E_2 \neq \emptyset$. 
Let $C \subset E_1$ be a disk cut off from $E_1$ by an outermost
arc $\alpha$ of $E_1 \cap E_2$ 
in $E_1$ such that $C \cap E_2 = \alpha$. 
Then both disks
obtained from surgery on $E_2$ 
along $C$ represent vertices 
of $\mathcal{Z}$. 
It follows from the proof of 
Theorem 4.2 in Cho \cite{Cho08} 
that $\mathcal{Z}$ is contractible. 

In order to prove that 
the $2$-skeleton of 
$\mathcal{Z} / \MCG_+(E(\Gamma), \underline{\underline{m}})$ is finite, 
we fix an auxiliary orientation of each cut-edge of $\Gamma$. 
Let $E$ be an essential disk in $(E(\Gamma), \underline{\underline{m}})$. 
If $\partial E$ is isotopic in $\partial E(\Gamma)$ to 
the boundary of one, say $D_1^\prime$, of the separating disks 
$D_1^\prime, D_2^\prime , \ldots, D_{n_2}^\prime$, 
we denote by $\hat{E}$ 
an essential disk in $N(\Gamma)$ such that 
$\partial E = \partial \hat{E}$ and that 
$\hat{E}$ intersects $\Gamma$ at the endpoint of 
the cut-edge of $\Gamma$ corresponding to $D_1^\prime$. 
Let $A$ is an element of $\underline{\underline{m}}$ 
containing $\partial E$. 
If $\partial E$ is parallel to none of the separating disks 
$D_1^\prime, D_2^\prime , \ldots, D_{n_2}^\prime$, 
we denote by $\hat{E}$ 
an essential disk in $N(\Gamma)$ such that 
$\partial E = \partial \hat{E}$ and that 
$\hat{E}$ intersects $\Gamma$ at the vertex corresponding to 
the component of $\underline{\underline{m}}$ containing $\partial E$. 
In this way, each essential disk $E$ in 
$(E(\Gamma), \underline{\underline{m}})$ 
determine a Type I sphere $E \cup \hat{E}$ for $(S^3, \Gamma)$ up to isotopy. 
This correspondence gives 
an injection from the set of edges of $\mathcal{Z}$ 
and the set of isotopy classes of 
a pair of non-isotopic, weakly disjoint Type I spheres for $(S^3, \Gamma)$. 
Hence, the number of orbits of edges of $\mathcal{Z}$
by the action of $\MCG_+(E(\Gamma), \underline{\underline{m}})$ 
is equal to or less than 
the number of pairs of non-isotopic, weakly disjoint 
Type I spheres for $(S^3, \Gamma)$. 
Hence by Lemma \ref{lemma:finiteness of orbits}, the $2$-skeleton of 
$\mathcal{Z} / \MCG_+(E(\Gamma), \underline{\underline{m}})$ is finite. 

Let $E$ be a vertex of $\mathcal{Z}$. 
We proceed to show that $\MCG( E(\Gamma) , \underline{\underline{m}} , E)$ 
is finitely presented. 
Cutting $E(\Gamma)$ along $E$, we get two manifolds 
$L_1$ and $L_2$. 
Both $L_1$ and $L_2$ inherit boundary-patterns 
$(\underline{\underline{l_1}}, E^\prime)$ and 
$(\underline{\underline{l_2}} , E^{\prime \prime})$ with a spot 
from $\underline{\underline{m}}$ and $E$, respectively, 
as in the following way. 
In the case where $\partial E$ is isotopic in $\partial E(\Gamma)$ to 
the boundary of one, say $D_1^\prime$, of the separating disks 
$D_1^\prime, D_2^\prime , \ldots, D_{n_2}^\prime$, then 
we isotope $E$ so that $\partial E = \partial D_1^\prime$. 
Let $E^\prime$ and $E^{\prime \prime}$ be the disks in $\partial L_1$ and 
$\partial L_2$, respectively, that came from $E$. 
Let $\underline{\underline{l_i}}^\prime = \{ A \cap \partial L_i 
\mid A  \in \underline{\underline{m}} \} $ for $i=1,2$. 
Then there exists the unique element $A_0 \in L_1$ 
($B_0 \in L_2$, respectively) such that 
$\partial E^\prime \subset \partial A_0$
($\partial E^{\prime\prime} \subset \partial B_0$, respectively).  
Now, the boundary patterns $(\underline{\underline{l_1}}, E^\prime)$ 
and $(\underline{\underline{l_2}}, E^{\prime\prime})$ with a spot 
are defined by 
\[ (\underline{\underline{l_1}}, E^\prime) = 
(\underline{\underline{l_1}}^\prime 
\setminus \{A_0\} \cup \{A_0 \cup E^\prime\} , E^\prime), ~\mbox{ and }
(\underline{\underline{l_2}}, E^{\prime \prime}) = 
(\underline{\underline{l_2}}^\prime 
\setminus \{A_0\} \cup \{B_0 \cup E^{\prime \prime}\} , E^{\prime \prime}). \]
See Figure \ref{fig:cut_along_E}. 
\begin{figure}[htbp]
\begin{center}
\includegraphics[width=13cm,clip]{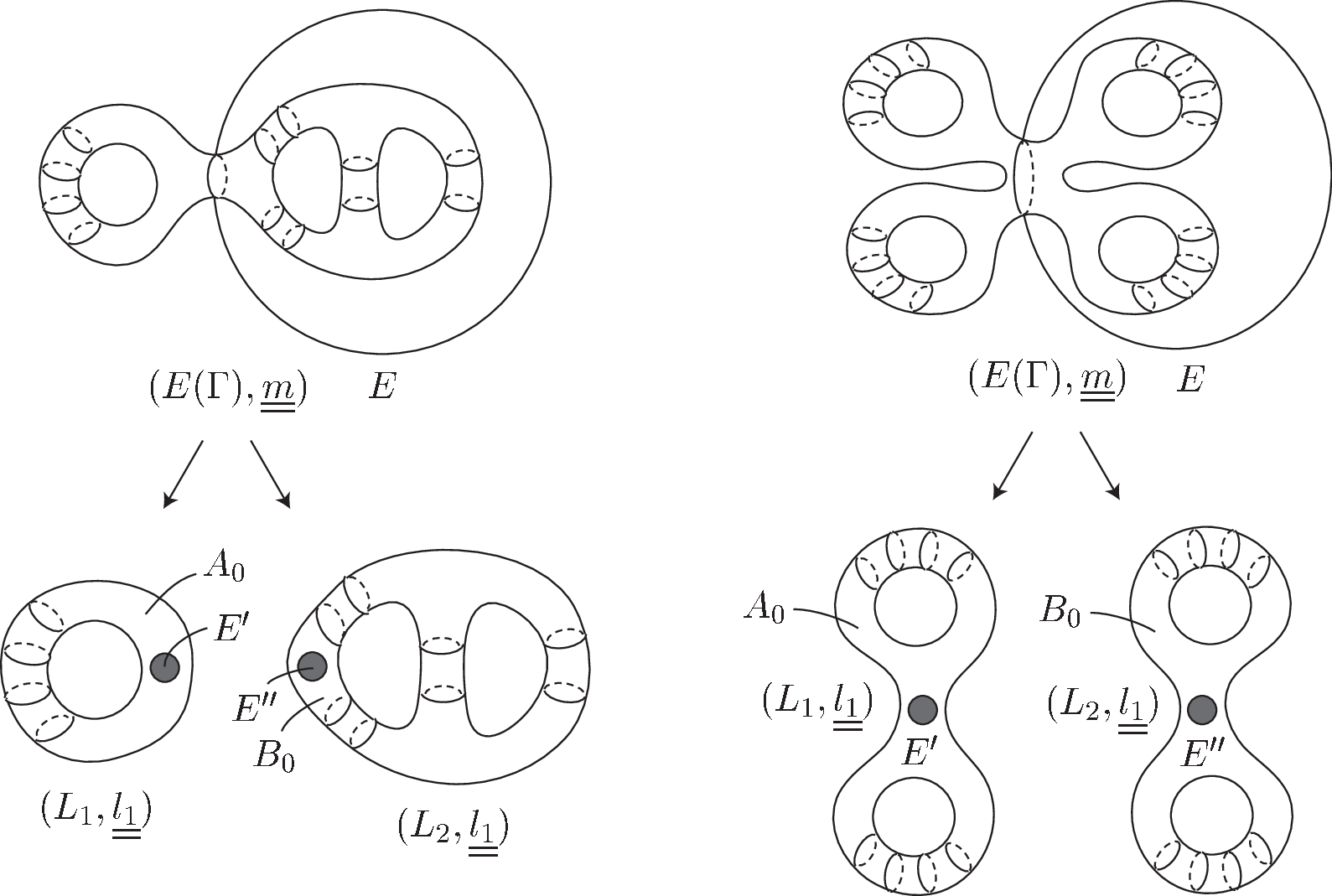}
\caption{From $(E(\Gamma), \underline{\underline{m}})$ and $E$ 
to $(L_1 , \underline{\underline{l_1}} , E^\prime)$ and 
 $(L_2 , \underline{\underline{l_2}} , E^{\prime \prime})$.}
\label{fig:cut_along_E}
\end{center}
\end{figure}
 
Note that $E^\prime$ and $E^{\prime \prime}$ are spots of 
$(L_1, \underline{\underline{l_1}})$ and 
$(L_2, \underline{\underline{l_2}})$, respectively. 
By the induction hypothesis and 
Lemmas 
\ref{lem:mapping class group of a 3-manifold with dotted boundary-pattern} 
and 
\ref{lem:mapping class group of a 3-manifold 
with dotted boundary-pattern 
whose dots are in a disk or an annulus}, 
it follows that 
both $\MCG (L_1, \underline{\underline{l_1}} ~\rel~ E^\prime)$ and 
$\MCG (L_2, \underline{\underline{l_2}} ~\rel~ E^{\prime \prime})$ 
are finitely presented. 

Let $\tau_1 \in \MCG_+ (L_1, \underline{\underline{l_1}} ~\rel~ E^\prime)$ and 
$\tau_2 \in \MCG_+ (L_2, \underline{\underline{l_2}} ~\rel~ E^{\prime \prime})$ 
be Dehn twists about $E^\prime$ and $E^{\prime \prime}$, respectively. 
Then we have 
\[ \MCG_+ (E(\Gamma) , \underline{\underline{m}} , E) 
\cong \MCG_+ (L_1, \underline{\underline{l_1}} ~\rel~ E^\prime) 
*_{\langle \tau_1 \tau_2^{-1} \rangle} 
\MCG_+ (L_2, \underline{\underline{l_2}} ~\rel~ E^{\prime \prime}) . \] 
Now it follows from 
Lemma \ref{lem:exact sequence and finite presentation} that 
$\MCG(E(\Gamma) , \underline{\underline{m}} , E)$ is finitely presented. 

Let $\{ E_1 , E_2 \}$ be an edge of $\mathcal{Z}$. 
Then we can 
prove that $\MCG( E(\Gamma) , \underline{\underline{m}} , E_1 \cup E_2)$ 
is finitely generated by an analogous argument using 
Lemmas 
\ref{lem:mapping class group of a 3-manifold with dotted boundary-pattern} 
and 
\ref{lem:mapping class group of a 3-manifold 
with dotted boundary-pattern 
whose dots are in a disk or an annulus}  
as above. 

Consequently, the action $\MCG(E(\Gamma) , \underline{\underline{m}})$ 
on the contractible simplicial complex $\mathcal{Z}$ satisfies the assumption of 
Theorem \ref{thm:theorem by Brown} and thus we get 
that $\MCG(E(\Gamma) , \underline{\underline{m}})$ 
is finitely presented, which is our claim. 
\end{proof}

\begin{theorem}
\label{thm: symmetry group of a graph is finitely presented}
For every spatial graph $(S^3, \Gamma)$, 
the symmetry group 
$\MCG (S^3 , \Gamma)$ 
is finitely presented. 
\end{theorem}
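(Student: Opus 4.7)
The plan is to derive the theorem from Lemma \ref{lem:symmetry group of an irreducible 3-manifold with useful boundary-pattern}, which asserts the finite presentability of $\MCG(E(\Gamma), \underline{\underline{n}})$. First, I would produce an injective homomorphism
\[ \iota \colon \MCG(S^3, \Gamma) \hookrightarrow \MCG(E(\Gamma)) \]
as follows. Given $f \in \Aut(S^3, \Gamma)$, one uses Lemma \ref{lem:two images of the neighborhood} to isotope $f$ (preserving $\Gamma$) to a representative preserving $N(\Gamma)$, and sets $\iota(f) = [f|_{E(\Gamma)}]$. Injectivity is obtained by composing Theorem \ref{thm:the symmetry group of a spatial graph is a subgroup} with the $\MCG$-analogue of Lemma \ref{lem:injectivity of the map from MCG_+(S3, V) to MCG+(E(V))}; the orientation-reversing case is reduced to the $\MCG_+$-case by the finite-index subgroup argument of Lemma \ref{lem:exact sequence and finite presentation}.

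Next, I would locate the image of $\iota$. Since a representative $f$ preserving $N(\Gamma)$ permutes the family of dual meridian disks $\{D_i\}$ according to its combinatorial action on the edges of $\Gamma$, the restriction $\iota(f)$ setwise permutes the components of the boundary pattern $\underline{\underline{n}}$ on $\partial E(\Gamma)$. Let $G \le \MCG(E(\Gamma))$ be the subgroup of mapping classes that setwise preserve the finite collection $\underline{\underline{n}}$. Then $\iota(\MCG(S^3, \Gamma)) \le G$, and the quotient $G / \MCG(E(\Gamma), \underline{\underline{n}})$ embeds into the finite symmetric group on the components of $\underline{\underline{n}}$; hence by Lemma \ref{lem:symmetry group of an irreducible 3-manifold with useful boundary-pattern} together with Lemma \ref{lem:exact sequence and finite presentation}, the group $G$ is finitely presented.

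Finally, to conclude, I would verify that $\iota(\MCG(S^3, \Gamma))$ has finite index in $G$. Given $g \in G$, a representative preserves each boundary annulus $A_i$ (up to permutation) and each $B_j$, so it extends across every meridian disk $D_i$ and every vertex-ball neighborhood to an automorphism of $S^3$ preserving $\Gamma$; by an Alexander-trick style argument in the spirit of the proof of Lemma \ref{lem:triviality of a mapping class group}, such an extension exists and is determined up to finitely many choices (Dehn twists about the $D_i$, reflections in vertex balls, and the like). A final application of Lemma \ref{lem:exact sequence and finite presentation} then yields that $\MCG(S^3, \Gamma) \cong \iota(\MCG(S^3, \Gamma))$ is finitely presented. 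The hard part will be controlling the extension across loop edges: here one must exploit the three-fold refinement of $A_i$ in $\underline{\underline{n}}$ (rather than just $\underline{\underline{m}}$) to ensure that the extension from $E(\Gamma)$ across the solid-torus neighborhood of a loop is pinned down modulo finite ambiguity, which is precisely the role played by the sub-division of loop-annuli in the definition of $\underline{\underline{n}}$.
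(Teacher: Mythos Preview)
Your approach is correct and is essentially the paper's argument, reorganized through the setwise stabilizer $G$ rather than through the topological symmetry group. The paper's proof is shorter because it uses the single identification
\[
\MCG(E(\Gamma), \underline{\underline{n}}) \;\cong\; \MCG(S^3 ~\rel~ \Gamma)
\]
(which is exactly the extension-over-$N(\Gamma)$ argument you sketch, restricted to elements fixing each pattern piece), and then feeds this into the exact sequence of Lemma~\ref{pro:symmetry groups and topological symmetry groups}:
\[
1 \to \MCG(S^3 ~\rel~ \Gamma) \to \MCG(S^3, \Gamma) \to \TSG(S^3, \Gamma) \to 1 .
\]
Since $\TSG(S^3,\Gamma)$ is finite and the kernel is finitely presented by Lemma~\ref{lem:symmetry group of an irreducible 3-manifold with useful boundary-pattern}, Lemma~\ref{lem:exact sequence and finite presentation} finishes immediately. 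Your sequence $1 \to \MCG(E(\Gamma),\underline{\underline{n}}) \to G \to (\text{finite permutation group}) \to 1$ is the same sequence after transporting along the identification above, since $\TSG(S^3,\Gamma)$ is precisely the image of $\MCG(S^3,\Gamma)$ in the permutation group of the pattern pieces.

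One point worth tightening: your extension argument actually gives $\iota(\MCG(S^3,\Gamma)) = G$, not merely finite index. Any two extensions of $g \in G$ over $N(\Gamma)$ preserving $\Gamma$ differ by an element of $\MCG(N(\Gamma), \Gamma ~\rel~ \partial N(\Gamma))$, which is trivial by Lemma~\ref{lem:triviality of a mapping class group}. So the language of ``finitely many choices (Dehn twists about the $D_i$, reflections in vertex balls)'' is misleading; there is no residual ambiguity at the level of mapping classes, and the finite-index step collapses to an isomorphism. With that correction your proof and the paper's coincide.
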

\begin{proof}
It is clear that 
$\MCG(E(\Gamma), \underline{\underline{m}}) \cong \MCG (S^3 ~\rel~ \Gamma)$.  
By Lemma \ref{pro:symmetry groups and topological symmetry groups} 
there exists the following exact sequence$:$ 
\[ 1 \to \MCG (S^3 ~\rel~ \Gamma) \to \MCG (S^3 , \Gamma) \to 
\TSG (S^3 , \Gamma) \to 1 . \]
Since both $\MCG (S^3 ~\rel~ \Gamma)$ and 
$\TSG (S^3 , \Gamma)$ are finitely presented by 
Lemma \ref{lem:symmetry group of an irreducible 
3-manifold with useful boundary-pattern}, it follows from 
Lemma \ref{lem:exact sequence and finite presentation} that 
$\MCG (S^3 , \Gamma)$ is finitely presented. 
\end{proof}

\begin{example}
Figure \ref{fig:handcuff} illustrates 
a {\it handcuff graph} $\Gamma$ planarly embedded in $S^3$. 
\begin{figure}[htbp]
\begin{center}
\includegraphics[width=4cm,clip]{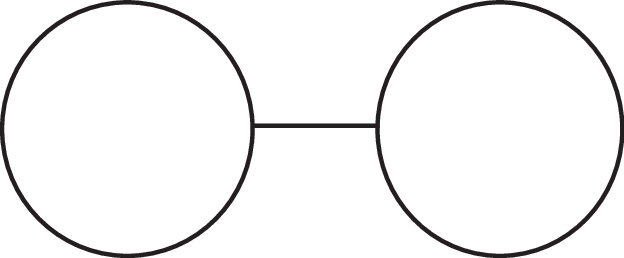}
\caption{A planarly embedded handcuff graph.}
\label{fig:handcuff}
\end{center}
\end{figure}
The graph $\Gamma$ has a unique cut-edge $e$ 
that is not a loop. 
This implies that all elements of $\MCG(S^3, \Gamma)$ preserve $e$. 
Thus we have 
\[ \MCG_+ (S^3 , \Gamma) = \langle g_1, g_2, g_3 \mid 
{g_1}^2, {g_3}^2, g_1 g_2 {g_1}^{-1} {g_2}^{-1}, g_1 g_3 {g_1}^{-1} {g_3}^{-1}, 
g_3 g_2 g_3 {g_2}^{-1} {g_1}^{-1}
 \rangle , \]
where $g_1, g_2, g_3$ are shown in Figure \ref{fig:handcuff_generators}. 
\begin{figure}[htbp]
\begin{center}
\includegraphics[width=12cm,clip]{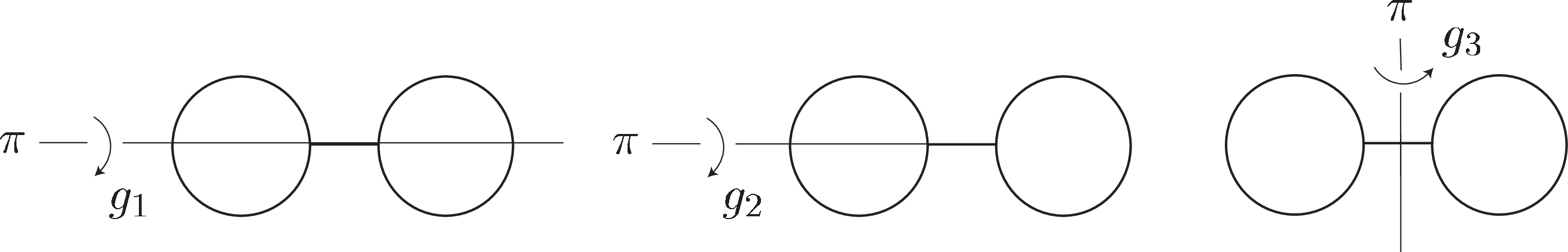}
\caption{Automorphisms $g_1$, $g_2$ and $g_3$.}
\label{fig:handcuff_generators}
\end{center}
\end{figure}

\end{example}

\section{The symmetry groups of reducible handlebody-knots of genus two} 
\label{sec:The symmetry groups of reducible handlebody-knots of genus two} 

Let $(S^3, V)$ be a handlebody-knot of genus two. 
In this section we assume that there exists 
a reducing sphere $P$ for $(S^3, V)$. 
The sphere $P$ separates $S^3$ into two 3-balls 
$B_1$ and $B_2$, i.e. $S^3 = B_1 \cup_{P} B_2$. 
Set $V_i = V \cap B_i$ for $i=1,2$ and 
let $K_1$ and $K_2$ be the cores of $V_1$ and $V_2$, respectively. 
Set $M_i = E(V) \cap B_i$ for $i=1,2$. 
We note that $M_i$ is homeomorphic to $E(K_i)$. 

\subsection{The case where both $K_1$ and $K_2$ are non-trivial}
\label{subsec:The case where both $K_1$ and $K_2$ are non-trivial}

In this subsection we always assume 
that both $K_1$ and $K_2$ are non-trivial knots. 
Figure \ref{fig:trefoil-figure_eight} illustrates 
a typical picture of $V$ in this case. 

\begin{figure}[htbp]
\begin{center}
\includegraphics[width=6cm,clip]{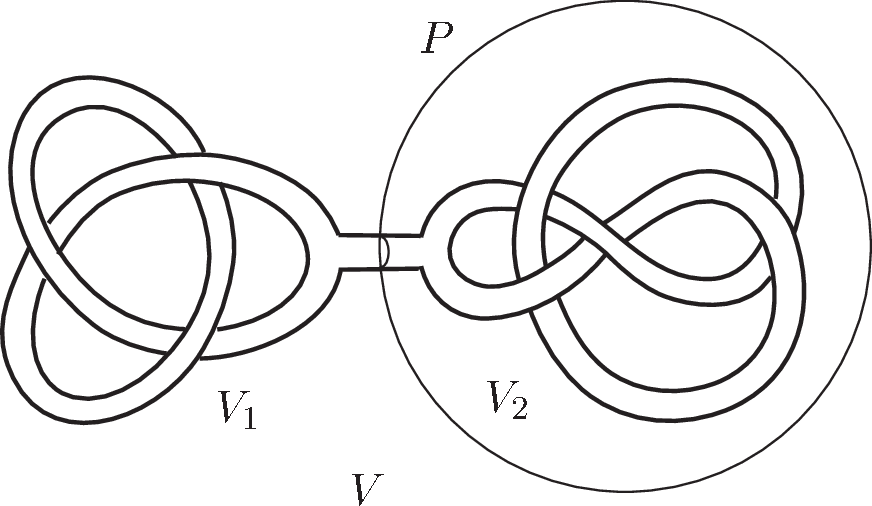}
\caption{}
\label{fig:trefoil-figure_eight}
\end{center}
\end{figure}

\begin{lemma}
\label{lem:uniqueness of P}
If both $K_1$ and $K_2$ are non-trivial, $E(V) \cap P$ is the 
unique $($up to ambient isotopy$)$ compression disk of $\partial E(V) $ in $E(V)$. 
\end{lemma}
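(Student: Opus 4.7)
The plan is a standard innermost-disk / outermost-arc analysis. Let $D$ be an arbitrary compression disk of $\partial E(V)$ in $E(V)$ and set $D_0 := E(V) \cap P$; isotope $D$ to minimize $|D \cap D_0|$ transversely. Recall that $D_0$ is an essential separating disk with $E(V) = M_1 \cup_{D_0} M_2$ and $M_i \cong E(K_i)$, so the non-triviality of $K_1$ and $K_2$ ensures that each $M_i$ is irreducible and $\partial$-irreducible. The boundary torus $\partial M_i$ decomposes as $F_i \cup D_0$, where $F_i := \partial E(V) \cap M_i$ is a once-punctured torus with $\partial F_i = \partial D_0$.

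First I would eliminate closed curves of $D \cap D_0$: an innermost such curve on $D$ bounds a subdisk $\delta \subset D$, which together with the subdisk of $D_0$ it cobounds forms a $2$-sphere in some $M_j$; by irreducibility of $M_j$, this sphere bounds a ball, and the ball provides an isotopy of $D$ decreasing $|D \cap D_0|$, contradicting minimality. Hence $D \cap D_0$ consists only of arcs.

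Suppose now that $D \cap D_0 \neq \emptyset$ and choose an outermost arc $\alpha$ on $D$, cutting off a disk $\delta \subset D$ with $\partial \delta = \alpha \cup \beta$ and $\beta \subset F_j$ for some $j$. The arc $\alpha$ separates $D_0$ into two subdisks $D_+, D_-$, and pushing $\delta \cup D_\pm$ slightly off $D_0$ into $M_j$ yields disks $\tilde D_\pm \subset M_j$ with $\partial \tilde D_\pm \subset F_j$. By $\partial$-irreducibility of $M_j$, each $\partial \tilde D_\pm$ is inessential in the torus $\partial M_j$; and since $\partial M_j = F_j \cup D_0$, a simple closed curve in $F_j$ that is trivial in $\partial M_j$ either bounds a disk in $F_j$ or is parallel in $F_j$ to $\partial F_j = \partial D_0$. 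In the former case, the disk in $F_j$ together with $\tilde D_\pm$ gives a sphere in $M_j$ bounding a ball (by irreducibility), which we use to isotope $D$ and decrease $|D \cap D_0|$, a contradiction. If instead both $\partial \tilde D_+$ and $\partial \tilde D_-$ are boundary-parallel in $F_j$, then $\beta$ is rel-endpoint homotopic in $F_j$ to each of the two arcs of $\partial D_0 \setminus \partial \alpha$, so these two arcs are themselves rel-endpoint homotopic, and their union $\partial D_0$ would bound a disk in the once-punctured torus $F_j$, which is impossible. Therefore $D \cap D_0 = \emptyset$ and $D \subset M_j$ for some $j$.

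The same dichotomy applied to $\partial D \subset F_j$ shows that, since $D$ is essential in $E(V)$, $\partial D$ must be parallel to $\partial D_0$ in $F_j$. Hence $\partial D$ and $\partial D_0$ cobound an annulus $A$ in $F_j$, and $D \cup A \cup D_0$ is a sphere in $M_j$ bounding a ball, which supplies an isotopy from $D$ to $D_0$. The main technical subtlety is the outermost-arc case analysis, specifically ruling out the subcase in which both $\partial \tilde D_+$ and $\partial \tilde D_-$ are boundary-parallel in $F_j$; the crucial observation there is that two complementary arcs of $\partial F_j$ on a once-punctured torus cannot be rel-endpoint homotopic in $F_j$.
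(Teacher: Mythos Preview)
Your argument is correct and follows the same overall strategy as the paper: an outermost-arc reduction exploiting the irreducibility and $\partial$-irreducibility of each $M_i \cong E(K_i)$. The only difference is a matter of packaging. The paper applies $\partial$-irreducibility of $M_j$ directly to the outermost bigon $\delta$ (whose boundary lies in the full torus $\partial M_j = F_j \cup D_0$), obtains a disk $A' \subset \partial M_j$ with $\partial A' = \partial \delta$, and pushes $\delta$ across the resulting ball; no case analysis is needed. You instead cap $\delta$ with the two halves $D_\pm$ of $D_0$ to force the boundaries $\partial\tilde D_\pm$ into $F_j$, and then split into the two cases ``bounds in $F_j$'' versus ``parallel to $\partial F_j$''. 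Your Case~A is exactly the paper's step once one observes that the disk $A' \subset \partial M_j$ is $D_\pm \cup \Delta$, and your Case~B (both curves boundary-parallel) is a way of ruling out that $A'$ could be the ``large'' complementary disk on both sides --- a possibility the paper's formulation simply never needs to confront. The concluding step (once $D \cap D_0 = \emptyset$) is again identical: $\partial D$ must be parallel to $\partial D_0$ in $F_j$, hence $D$ is isotopic to $D_0$. So your proof and the paper's are the same argument, with yours carrying a bit of extra bookkeeping.
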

\begin{proof}
Set $D = E(V) \cap P$. 
Let $D_1$ be an essential disk in $E( V )$. 
Isotope $D_1$ in order to minimize transverse intersections of 
$D$ and $D_1$ 
and suppose for a contradiction that 
$D \cap D_1 \neq \emptyset$. 
Since $E(V)$ is irreducible, there exists no loop in $D \cap D_1$. 
Let
$A \subset D_1$ be a disk cut off from $D_1$ by an outermost arc $\alpha$ 
of $D \cap D_1$ in $D_1$ such that
$A \cap D = \alpha$. 
We can assume that $A$ is contained in $M_1$. 
Then $A$ is a proper disk in $M_1 \cong E(K_1)$, 
and since $K_1$ is assumed to be 
non-trivial, $\partial A$ bounds 
a disk $A^\prime$ in $\partial M_1$. 
Since $E(V)$ is irreducible, $A \cup A^\prime$ 
bounds a 3-ball $B$ in $E(V)$. 
Therefore we can isotope $D_1$ in $N(B)$ 
so as to decrease $\#(D \cap D_1)$, which contradicts 
the minimality of the intersection. 
Thus $D \cap D_1 = \emptyset$. 
We can assume, without loss of generality, 
that $D_1 \subset M_1$. 
Since $K_1$ is non-trivial, 
$M_1 \cong E(K_1)$ is Haken. 
Hence $\partial D_1$ bounds a disk $A^{\prime \prime}$ in 
$\partial M_1$. 
By assumption, $\partial D_1$ is essential in $\partial E(V)$, thus 
essential in $\partial E(V) \cap \partial M_1$ as well. 
Thus $A^{\prime \prime}$ must contain 
$D \subset \partial M_1$. 
We conclude, by the irreducibility of 
$E(V)$, that $D_1$ is parallel to $D$. 
\end{proof}

We remark that from the Lemma \ref{lem:uniqueness of P} and 
irreducibility of the handlebody $V$, 
it follows that $P$ is the unique reducing sphere for $V$. 

\begin{lemma}
\label{lem:natural isomorphism for non-trivial-non-trivial case}
If both $K_1$ and $K_2$ are non-trivial, 
we have 
the natural isomorphism $\MCG_+(S^3, V) \cong \MCG_+(E(V))$. 
\end{lemma}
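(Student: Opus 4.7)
The plan is to leverage Lemma \ref{lem:injectivity of the map from MCG_+(S3, V) to MCG+(E(V))} for injectivity of the natural homomorphism $\MCG_+(S^3,V)\to\MCG_+(E(V))$, and to establish surjectivity. Given $f\in\MCG_+(E(V))$, I will construct a lift $\tilde f\colon S^3\to S^3$ with $\tilde f(V)=V$ and $\tilde f|_{E(V)}=f$. Orientation-preservation of $\tilde f$ on all of $S^3$ will then be automatic from that of $f$ on $E(V)$, since $S^3$ is connected and $E(V)$ has non-empty interior.

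First, I normalize $f$ so that it fixes the disk $D=E(V)\cap P$ setwise. By Lemma \ref{lem:uniqueness of P}, $D$ is the unique compressing disk of $\partial E(V)$ in $E(V)$ up to isotopy, so after an ambient isotopy of $E(V)$ I may assume $f(D)=D$. In particular $f$ preserves the circle $\partial D=\partial D_V\subset\partial V$ setwise, where $D_V=V\cap P$, and $f$ either preserves or interchanges the two components $M_1,M_2$ of $E(V)\setminus D$.

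The main step is to extend $f|_{\partial V}$ across the handlebody $V=V_1\cup_{D_V}V_2$, where each $V_i$ is a solid torus and $D_V$ is a meridian disk of both. I extend $f|_{\partial D_V}$ to a self-homeomorphism of the disk $D_V$ by coning; using the same cone extension on both sides of $D_V$, together with $f|_{\Sigma_i}$ (where $\Sigma_i=\partial V\cap V_i$ is a once-punctured torus), produces a homeomorphism $\phi_i\colon\partial V_i\to\partial V_{\sigma(i)}$, where $\sigma$ is the induced permutation of $\{1,2\}$. The meridian of each solid torus $V_j$ is represented by the curve $\partial D_V\subset\partial V_j$, and $\phi_i(\partial D_V)=\partial D_V$ by construction; hence by the standard extendability criterion for torus homeomorphisms (a homeomorphism between boundary tori of solid tori extends over the solid tori iff it sends meridian to meridian up to isotopy), each $\phi_i$ extends to a homeomorphism $V_i\to V_{\sigma(i)}$. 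Assembling these extensions with $f$ on $E(V)$ yields the desired $\tilde f$.

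I do not anticipate any serious obstacle: the argument is essentially forced by the uniqueness of the compressing disk $D$ supplied by Lemma \ref{lem:uniqueness of P} and by the classification of self-homeomorphisms of solid tori via the meridian. The only point that deserves care is the simultaneous compatibility of the extensions across $V_1$ and $V_2$ along the shared disk $D_V$, which is handled by using the single coning extension on $D_V$ from both sides.
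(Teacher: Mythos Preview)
There is a genuine gap in your surjectivity argument. You assert that ``the meridian of each solid torus $V_j$ is represented by the curve $\partial D_V\subset\partial V_j$,'' but this is false: the disk $D_V$ lies \emph{on} the boundary torus $\partial V_j=\Sigma_j\cup D_V$, so $\partial D_V$ is null-homotopic on $\partial V_j$ and cannot be a meridian. The meridian of the solid torus $V_j$ is an essential curve on $\partial V_j$ bounding a properly embedded disk \emph{inside} $V_j$; concretely it is the meridian of the knot $K_j$, living on the once-punctured torus $\Sigma_j\subset\partial V$. Hence the fact that $\phi_i(\partial D_V)=\partial D_V$ tells you nothing about whether $\phi_i$ carries meridian to meridian, and the extendability criterion you invoke does not apply.

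This is exactly where the hypothesis that $K_1$ and $K_2$ are non-trivial is used, and where the paper's proof differs from yours. After cutting along $D$, the map $f$ restricts to a homeomorphism $M_i\to M_{\sigma(i)}$ with $M_j\cong E(K_j)$. To extend across $V_j$ one must know that this homeomorphism sends the meridian slope of $K_i$ to the meridian slope of $K_{\sigma(i)}$, and that is precisely the content of the Gordon--Luecke theorem \cite{GL89}: a homeomorphism between exteriors of non-trivial knots in $S^3$ extends to a homeomorphism of $(S^3,K)$, in particular it respects meridians. The paper invokes Gordon--Luecke at this point (separately in the cases where $f|_D$ is orientation-preserving or orientation-reversing) and then extends over $V$; your argument attempts to bypass this step and therefore does not go through.
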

\begin{proof}
Let $\varphi : \MCG_+(S^3, V) \to \MCG_+(E(V))$ be the homomorphism 
that takes $f \in \MCG_+(S^3, V)$ to
$f|_{E(V)} \in \MCG_+ (E(V))$.  
By Lemma 
\ref{lem:injectivity of the map from MCG_+(S3, V) to MCG+(E(V))}, 
it remains to show the surjectivity of $\varphi$. 
Set $D = E(V) \cap P$. 
Take an arbitrary element $g \in \Aut_+(E(V))$. 
By Lemma \ref{lem:uniqueness of P}, we can isotope 
$g$ so that $g(D) = D$. 
It suffices to show that $g$ extends to an element of $\Aut_+(S^3, V)$.  

Suppose first that $g |_D$ is orientation-preserving. 
Then we have that $g |_{M_i} \in \Aut_+(M_i)$. 
Since $M_i \cong E(K_i)$, it follows from Gordon-Luecke \cite{GL89} that 
$g |_{M_i}$ extends to an element of $\Aut_+(B_i, V_i)$. 
Thus $g$ extends to an element of $\Aut_+(S^3, V)$ 
by Alexander's trick. 
The situation is only slightly more complicated 
if we drop the requirement that $g|_D$ 
be orientation-preserving. 
Next, suppose that $g |_D$ is orientation-reversing. 
We note that in this case $K_1$ and $K_2$ are the same knot. 
Then $g (M_i) = M_{i+1}$ (subscripts $\mathrm{mod}~ 2$). 
Again by \cite{GL89}, we see that 
$g$ takes a meridian slope of $V_i$ in 
$\partial M_i \setminus \Int \thinspace D$ to 
a meridian slope of $V_{i+1}$ in 
$\partial M_{i+1} \setminus \Int \thinspace D$. 
Therefore $g$ extends to an element of $\Aut_+(S^3, V)$.  
\end{proof}

The following proposition follows immediately 
from Lemmas \ref{lem:exact sequence and finite presentation} and 
\ref{lem:natural isomorphism for non-trivial-non-trivial case}, 
and 
Theorem \ref{thm:finitely presentability by McCullough}. 

\begin{proposition}
\label{prop:finite presentation of non-trivial non-trivial case}
If both $K_1$ and $K_2$ are non-trivial, 
then the group $\MCG(S^3, V)$ 
is finitely presented. 
\end{proposition}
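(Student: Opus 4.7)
The plan is to chain together the three tools the authors have already assembled: the McCullough finiteness theorem (Theorem \ref{thm:finitely presentability by McCullough}), the natural isomorphism from Lemma \ref{lem:natural isomorphism for non-trivial-non-trivial case}, and the general principle that finite presentability passes between $\Aut$ and $\Aut_+$ (Lemma \ref{lem:exact sequence and finite presentation}).

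First I would verify that $E(V)$ satisfies the hypotheses of McCullough's theorem, namely that it is a compact, orientable, irreducible, sufficiently large $3$-manifold. Orientability is automatic since $E(V) \subset S^3$. Irreducibility of $E(V)$ has already been used implicitly in the proof of Lemma \ref{lem:uniqueness of P}, and in any case it follows quickly: writing $E(V) = M_1 \cup_D M_2$ with $M_i \cong E(K_i)$ and using the irreducibility of the knot exteriors $E(K_1)$ and $E(K_2)$, any embedded $2$-sphere in $E(V)$ can be isotoped off $D$ (using $\partial$-irreducibility of the knot exteriors, which holds because $K_1$ and $K_2$ are non-trivial) and then bounds a ball inside one of the $M_i$. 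For sufficient largeness, the disk $D = E(V) \cap P$ is a properly embedded essential disk in $E(V)$ by Lemma \ref{lem:uniqueness of P}, so $E(V)$ is Haken.

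Having established these hypotheses, Theorem \ref{thm:finitely presentability by McCullough} gives at once that $\MCG_+(E(V))$ is finitely presented. Applying Lemma \ref{lem:natural isomorphism for non-trivial-non-trivial case}, which gives the natural isomorphism $\MCG_+(S^3, V) \cong \MCG_+(E(V))$, I conclude that $\MCG_+(S^3, V)$ is finitely presented. Finally, the second assertion of Lemma \ref{lem:exact sequence and finite presentation} upgrades this from $\MCG_+$ to $\MCG$, yielding that $\MCG(S^3, V)$ is finitely presented.

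Since essentially every nontrivial piece of work has been done in the preceding lemmas, there is no serious obstacle: the only thing to be careful about is confirming that the hypotheses of McCullough's theorem are in force, and in particular that $E(V)$ is Haken (which is immediate from the presence of the essential disk $D$) and irreducible (which is the only point requiring a brief argument, and which relies on both $K_1$ and $K_2$ being non-trivial so that the pieces $M_i$ are $\partial$-irreducible knot exteriors).
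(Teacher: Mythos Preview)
Your proposal is correct and follows exactly the same route as the paper, which simply cites Lemmas~\ref{lem:exact sequence and finite presentation} and~\ref{lem:natural isomorphism for non-trivial-non-trivial case} together with Theorem~\ref{thm:finitely presentability by McCullough}. One small remark: your justification of irreducibility of $E(V)$ is more elaborate than necessary---for any handlebody $V\subset S^3$, every $2$-sphere in $E(V)$ bounds a ball in $S^3$ on the side not containing $V$, so irreducibility holds regardless of whether the $K_i$ are trivial.
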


We remark that by Lemma \ref{lem:uniqueness of P}, 
$\MCG(S^3, V)$ is actually nothing else but 
$\MCG (S^3, \Gamma)$, where 
$\Gamma$ is the spatial handcuff graph obtained by 
connecting $K_1$ and $K_2$ by a ``straight" line 
in $V$. 
From this viewpoint, Proposition 
\ref{prop:finite presentation of non-trivial non-trivial case}
can be regarded as a direct corollary of 
Theorem \ref{thm: symmetry group of a graph is finitely presented}.

\subsection{The case where $K_1$ is non-trivial and $K_2$ is trivial}
\label{subsec:The case where $K_1$ is non-trivial and $K_2$ is trivial}

In this subsection we always assume that 
only one of them (say $K_2$) is the unknot. 
(Hence $K_1$ is non-trivial.)  
Figure \ref{fig:trefoil-unknot} illustrates 
a typical picture of $V$ in this case. 

\begin{figure}[htbp]
\begin{center}
\includegraphics[width=6cm,clip]{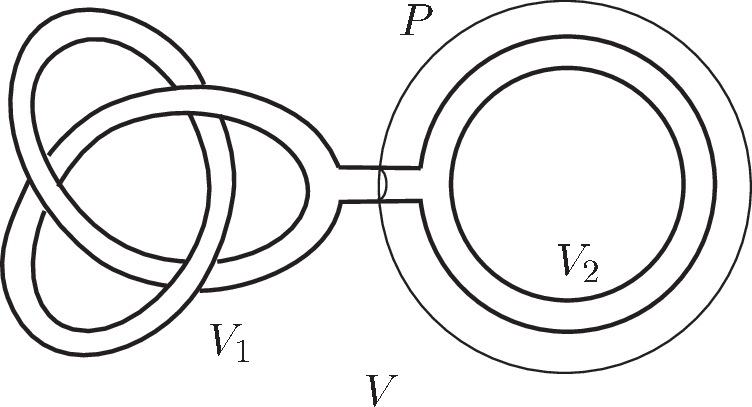}
\caption{}
\label{fig:trefoil-unknot}
\end{center}
\end{figure}

Let $W$ be a 3-manifold obtained 
from a closed, connected, orientable surface $F$ by 
attaching a single $2$-handle to 
$F \times \{ 1 \} \subset F  \times [0,1]$ 
along a non-separating simple closed curve $l$ 
in $F \times \{ 1 \}$. 
Let $\partial_+ W = F \times \{0\}$ and 
$\partial_- W = \partial W \setminus \partial_+ W$. 
A core disk of the 2-handle can
be vertically extended through $F \times [0,1]$ 
to a non-separating essential disk 
$E$ in $W$. 
The manifold $W$ is call a {\it compression body}. 
See e.g. Scharlemann \cite{Sch02} for general definition and 
properties of compression bodies. 

\begin{lemma}
\label{lem:non-sepa. disks of compression body 2}
Any non-separating essential disk in $W$ 
whose boundary lies in $\partial_+ W$ 
is parallel to $E$.  
\end{lemma}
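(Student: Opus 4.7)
The plan is first to isotope $E'$ so that $E \cap E' = \emptyset$, and then to analyze $E'$ inside the complementary product region. The key structural fact is that cutting $W$ along $E$ yields a manifold homeomorphic to $\partial_- W \times [0,1]$: in the dual description of $W$ as $\partial_- W \times [0,1]$ with a single $1$-handle attached to $\partial_- W \times \{1\}$, the disk $E$ is (isotopic to) the cocore of this $1$-handle. Under the resulting identification, the boundary component of $W \setminus \mathrm{Int}\, N(E)$ coming from $\partial_+ W$ is a surface $\Sigma$ obtained from $\partial_+ W$ by deleting an annular collar of $\partial E$ and gluing in two cap disks $D_1, D_2$ (the two copies of $E$ produced by the cut); topologically $\Sigma \cong \partial_- W$.

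To disjoin $E$ and $E'$, I minimize $\#(E \cap E')$ over isotopies of $E'$. An innermost-disk argument, combined with the irreducibility of $W$, removes intersection loops, so $E \cap E'$ is a disjoint union of arcs. Suppose some arc remains, and let $\alpha$ be one that is outermost on $E$, cutting off a sub-disk $A \subset E$ with $A \cap E' = \alpha$ and $\partial A \setminus \alpha \subset \partial E$. Disk-surgery on $E'$ along $A$ replaces $E'$ with two properly embedded disks $E''_1, E''_2$ in $W$, each with boundary in $\partial_+ W$ and with strictly fewer intersections with $E$. Because $[\partial E''_1] + [\partial E''_2] = [\partial E']$ in $H_1(\partial_+ W; \mathbb{Z}/2)$ and $\partial E'$ is non-separating, at least one $E''_i$ has non-separating boundary and is therefore itself a non-separating essential disk; replacing $E'$ by this disk contradicts the minimality of $\#(E \cap E')$, so $E \cap E' = \emptyset$.

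With $E' \subset \partial_- W \times [0,1]$ and $\partial_- W$ incompressible in the product (the special case where $\partial_- W$ is a sphere can be handled by direct inspection of $W$ as a once-punctured solid torus), the disk $E'$ is boundary-parallel, so $E' \cup D^*$ bounds a $3$-ball $B^*$ for some disk $D^* \subset \Sigma$. The lemma now follows from a case analysis of $D^* \cap (D_1 \sqcup D_2)$. If $D^*$ meets neither cap, then $D^* \subset \partial_+ W$ and $\partial E'$ bounds a disk on $\partial_+ W$, contradicting essentiality of $E'$. If $D^*$ contains both caps, then $B^* \cup N(E)$ is a solid torus in $W$ bounded on its interior side by $E'$ alone and on $\partial W$ by a one-holed torus; but then $E'$ separates $W$, contradicting the non-separating hypothesis. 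Hence $D^*$ contains exactly one cap, and $B^*$ together with the corresponding half of $N(E)$ furnishes an explicit $D^2 \times [0,1]$-region realizing a parallelism between $E$ and $E'$. The main obstacle is the final case analysis: excluding the two-cap case is precisely where the non-separating hypothesis on $E'$ is essential.
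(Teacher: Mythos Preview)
Your case analysis in the second half---showing that once $E'$ is disjoint from $E$ it must be parallel to $E$---is correct, and in fact more explicit than the paper, which at that point simply invokes irreducibility and $\partial$-irreducibility of $W'\cong\partial_-W\times[0,1]$. The gap is in your disjoining step. You minimize $\#(E\cap E')$ over isotopies of the \emph{given} disk $E'$, then surger $E'$ along an outermost sub-disk $A\subset E$, obtaining $E''_1,E''_2$; one of them is a non-separating essential disk with strictly fewer intersections with $E$, and you say this contradicts minimality. It does not: the minimum was taken within the isotopy class of $E'$, and disk surgery does not in general preserve that class. As written, your argument only shows that \emph{some} non-separating essential disk can be made disjoint from $E$---a vacuous conclusion, since $E$ itself is such a disk. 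You have not shown that the original $E'$ can be isotoped off $E$, so your case analysis never actually applies to it.

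The fix, which is what the paper does, is to take the outermost arc on $E'$ rather than on $E$. Then the outermost sub-disk $A$ lies in $E'$ and satisfies $A\cap E=\alpha$, so $A$ is a properly embedded disk in $W'\cong\partial_-W\times[0,1]$. Because this product is irreducible and $\partial$-irreducible (when $\partial_-W$ is not a sphere), $\partial A$ bounds a disk $A'$ in $\partial W'$ and $A\cup A'$ bounds a $3$-ball; pushing $A$ across this ball is a genuine isotopy of $E'$ (not a surgery) that removes $\alpha$ from $E\cap E'$, contradicting minimality within the isotopy class as required.
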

\begin{proof}
Let $E^\prime$ be a non-separating essential disk in $W$ such that 
$\partial E^\prime \subset \partial_+ W$. 
Isotope $E^\prime$ in order to minimize $\#(E \cap E^\prime)$ 
and suppose for a contradiction that $E \cap E^\prime \neq \emptyset$. 
It is easy to see that $W$ is irreducible, 
hence all components of $E \cap E^\prime$ 
are arcs. 
Let $A \subset E^\prime$ be a disk cut off from 
$E^\prime$ by an outermost arc $\alpha$ 
of $E \cap E^\prime$ in $E^\prime$ such that $A \cap E = \alpha$. 
Then $A$ is a proper disk in the 
3-manifold $W^\prime$ obtained by cutting $W$ along 
$E$. 
Since $W^\prime$ is homeomorphic to $\partial_- W \times [0,1]$ and 
$\partial_- W \times [0,1]$ is boundary-irreducible, 
$\partial A$ bounds a disk $A^\prime$ in $\partial W^\prime$. 
Since $\partial_- W \times [0,1]$ is irreducible, $A \cup A^\prime$ bounds 
a 3-ball in $W^\prime$. 
We can isotope $E'$ so as to decrease $\# (E \cap E^\prime)$ 
, which contradicts the minimality of the intersection. 
Therefore $E$ is disjoint from $E'$. 
Now, the lemma follows from the fact that $W^\prime$ 
is irreducible and boundary-irreducible. 
\end{proof}

\begin{lemma}
\label{lem:E is unique}
Up to isotopy, there exists a unique non-separating disk in $E(V)$, 
which is prime. 
\end{lemma}
\begin{proof}
Set $W = M_2 \cup N(\partial M_1 ; M_1)$. 
We note that $W$ is a compression body 
satisfying the assumption of 
Lemma \ref{lem:non-sepa. disks of compression body 2}. 
In this case, we have 
$\partial_+ W = \partial E(V)$ and 
$\partial_- W = \partial W \setminus \partial_+ W$. 

The surface $\partial_- W$ separates 
$E(V)$ into two components 
$W$ and $E(W) = E(V) \setminus \Int \thinspace W$. 
Then the surface 
$\partial_- W$ is incompressible in $W$ since 
the negative boundary of a compression body 
is incompressible. 
The surface $\partial_- W$ is also 
incompressible in $E(W)$ since 
$(E(W) , \partial_- W)$ is homeomorphic to 
$(E(K) , \partial E(K))$. 
Thus $\partial_- W$ is incompressible in $E(V)$.  
Let $E$ be a non-separating disk in $E(V)$. 
Since $E(V)$ is irreducible and 
$\partial_- W$ is incompressible in $E(V)$, 
$E$ can be isotoped into $W$. 
It follows from Lemma \ref{lem:non-sepa. disks of compression body 2} 
that $E$ is the unique non-separating disk in $E(V)$. 
Since there exists a primitive disk in $E(V)$, 
$E$ is the unique primitive disk in $E(V)$. 
\end{proof}

\begin{lemma}
\label{lem:non-sepa. disk and loop}
Let $H$ be the genus two handlebody. 
Let $D_2 \subset H$ be an essential non-separating disk. 
Let $l$ be a non-separating simple closed curve in $\partial H$ 
such that $\partial D_2$ and $l$ have a single 
transverse intersection in $\partial H$. 
Let $D_1 \subset H$ be an essential non-separating disk such that 
$D_1 \cap (D_2 \cup l) = \emptyset$. 
If $D^\prime \subset H$ is an essential non-separating disk such that  
$\partial D^\prime$ and $l$ have a single 
transverse intersection in $\partial H$, 
$D^\prime$ can be isotoped in $H$ so that 
$D_1 \cap D^\prime = \emptyset$.  
\end{lemma}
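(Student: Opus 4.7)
The plan is to argue by contradiction via a disk surgery argument, after reducing the problem to making $\partial D'$ and $\partial D_1$ disjoint on $\partial H$. Since $H$ is irreducible, two properly embedded disks with disjoint boundaries can be isotoped to have disjoint interiors by the standard innermost-circle argument, so it suffices to arrange $\partial D' \cap \partial D_1 = \emptyset$. First I would isotope $D'$ to minimize $|D' \cap D_1|$ (which simultaneously minimizes $|\partial D' \cap \partial D_1|$) and assume for contradiction this minimum is positive; irreducibility of $H$ ensures $D' \cap D_1$ consists only of arcs.

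Let $\alpha$ be an outermost arc of $D' \cap D_1$ on $D_1$ cutting off a subdisk $E \subset D_1$ with $E \cap D' = \alpha$ and $\partial E = \alpha \cup \beta_1$ for an arc $\beta_1 \subset \partial D_1$. Surgering $D'$ along $E$ yields two disks $D''_a$ and $D''_b$ with $|D''_a \cap D_1| + |D''_b \cap D_1| < |D' \cap D_1|$. Because $\beta_1$ is disjoint from $l$ while the unique intersection of $\partial D' \cap l$ lies on exactly one of the two sub-arcs into which the endpoints of $\alpha$ divide $\partial D'$, one of the new disks---say $D''_a$---satisfies $|\partial D''_a \cap l| = 1$, while the other, $D''_b$, has $\partial D''_b$ disjoint from $l$. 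The bigon criterion, combined with the minimality of $|\partial D' \cap \partial D_1|$, prevents $\partial D''_b$ from bounding a disk in $\partial H$, so $D''_b$ is essential in $H$.

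Next I would view $D''_b$ as a properly embedded disk in the solid torus $H(l)$: it is either a meridian of $H(l)$ or boundary-parallel. In the boundary-parallel case $\partial D''_b$ bounds a disk on $\partial H(l) = T^2$, and a case analysis in the decomposition $\partial H(l) = (\partial H \setminus N(l)) \cup \Delta_+ \cup \Delta_-$, where $\Delta_\pm$ denote the two $2$-handle caps, forces either $D''_b$ to bound a disk in $\partial H$ (contradicting essentiality) or $\partial D''_b$ to be isotopic on $\partial H$ to a parallel copy of $l$---impossible since $l$ represents a non-trivial conjugacy class in $\pi_1(H) \cong F_2$ and hence does not bound any disk in $H$. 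Thus $D''_b$ must be a meridian of $H(l)$.

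The final and hardest step is to upgrade $D''_b \sim D_1$ in $H(l)$ to $D''_b \sim D_1$ in $H$, so that $D_1$ and $D''_b$ cobound a product region $D^2 \times [0,1] \subset H$. I would iterate the same surgery argument with $D_2$ in place of $D_1$ to isotope $D''_b$ to be disjoint from $D_2$; then $D''_b$ lies in the solid torus $H \setminus N(D_2)$, in which $D_1$ is the unique essential non-separating disk up to isotopy, and since $\partial D''_b$ is disjoint from $l$ whereas $\partial D_2$ meets $l$ in one point, $D''_b$ cannot be parallel to $D_2$, forcing $D''_b \sim D_1$ in $H$. Granted this parallelism, the band-sum recovery of $D'$ from $D''_a$ and $D''_b$ through $E$ can be realized as an isotopy across the product region, replacing $D'$ by a disk isotopic to $D''_a$ with strictly fewer intersections with $D_1$, contradicting the minimality assumption. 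The main obstacle is precisely this final upgrade, which requires tracking how $D''_b$ interacts with $D_2$ in addition to $D_1$ and applying the outermost-surgery machinery a second time.
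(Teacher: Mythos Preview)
Your overall strategy---outermost-arc surgery on $D'$ using $D_1$, identifying one surgery piece as parallel to $D_1$, then absorbing it---matches the paper's, but two key ingredients of the paper's proof are missing and your substitutes do not work as stated.

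First, the paper does \emph{not} merely minimise $|D'\cap D_1|$ within the isotopy class of $D'$; it assumes $|D'\cap D_1|$ is minimal among \emph{all} essential disks meeting $l$ once. With that stronger hypothesis the surgery piece $D''_a$ (meeting $l$ once) is forced to be already disjoint from $D_1$, and this is exactly what the paper uses to pin down the parallelism $D''_b\parallel D_1$. Your minimality gives no such control over $D''_a$.

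Second, to show the surgery pieces are well-behaved the paper first embeds $H$ in $S^3$ so that $l$ bounds a disk $E$ in $E(H)$ and $D_1$ is primitive; then both $D'$ and $D_1$ are primitive disks for the genus-two splitting, and Cho's Lemma~2.3 forces both surgery pieces to be primitive (hence non-separating). Without this device your step~6 case analysis is incomplete: when $\partial D''_b$ bounds a disk in $\partial H(l)$ containing \emph{both} caps $\Delta_\pm$, the curve $\partial D''_b$ is separating in $\partial H$ and $D''_b$ is a separating disk of $H$---this is neither ``bounds in $\partial H$'' nor ``parallel to $l$'', and nothing you wrote rules it out.

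Finally, your step~7 is where the real work lies, and ``iterate the same surgery argument with $D_2$ in place of $D_1$'' is not the same argument: the roles are not symmetric (now the disk you want to push off, $D_2$, meets $l$, while the moving disk $D''_b$ does not), so you cannot invoke the lemma you are proving, and a naive outermost surgery on $D_2$ does not terminate for the same reason the original argument needed the extra input. The paper's proof of the corresponding Claim uses the embedding in $S^3$, the primitivity of both pieces, and the global minimality to place $D_1$, $D''_a$, $D''_b$ simultaneously inside the solid torus $H\cup N(E)=H(l)$ in a controlled way; that is the missing idea.
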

\begin{proof}
If $D'$ intersects $D_1$, 
then there exist at least two outermost sub-disk, 
say $C_1$ and $C_2$, of $D'$ cut off by $D' \cap D_1$ such that 
$C_1$ and $C_2$ meet $D_1$ in opposite sides. 
Furthermore, each of $C_1$ and $C_2$ intersects 
$l$ since $l$ is a longitude of the solid torus $V$ cut off by $D_1$. 
This is a contradiction since $D'$ intersects $l$ 
in a single point. 
\end{proof}

The following lemma is an immediate consequence of 
Lemma \ref{lem:E is unique} and \ref{lem:non-sepa. disk and loop}. 
\begin{lemma}
\label{lem:D1 does not intersect primitive disks}
Let $D_1$ be a meridian disk of the solid torus 
$V_1$ such that $D_1$ is disjoint from 
the reducing sphere $P$. 
Then any primitive disk $D^\prime$ of $V$ 
can be isotoped in $V$ so that 
$D_1 \cap D^\prime = \emptyset$.  
\end{lemma}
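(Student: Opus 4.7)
The plan is to apply Lemma \ref{lem:non-sepa. disk and loop} with $H=V$, with $l=\partial E$ the boundary of the canonical essential disk in the compression body $W = M_2\cup N(\partial M_1;M_1)$ from the previous discussion, and with $D_2$ a meridian disk of the solid torus $V_2$ chosen to be disjoint from $D_1$ and to meet $l$ transversely in a single point.

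First, I would normalise the primitive certificate of $D'$ using Lemma \ref{lem:E is unique}. By the definition of a primitive disk, there is an essential disk $E'$ of $E(V)$ with $|\partial D'\cap\partial E'|=1$; since $\partial D'$ meets $\partial E'$ in a single transverse point, $E'$ is non-separating and forms a primitive pair with $D'$, so in particular $E'$ is a primitive disk of $E(V)$. Lemma \ref{lem:E is unique} then says that $E'$ is parallel to $E$ in $E(V)$, and this parallelism can be realised by an ambient isotopy of $S^3$ preserving $V$. After this isotopy I may assume $E'=E$ and set $l=\partial E$, so that $|\partial D'\cap l|=1$.

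Second, I would verify the remaining hypotheses of Lemma \ref{lem:non-sepa. disk and loop}. The disk $E$ may be represented in $W$ so that $\partial E$ lies in $\partial V\cap B_2$ and, as a curve on $\partial V_2$, is a longitude of the solid torus $V_2$; this places $l$ entirely on the $B_2$-side and hence automatically disjoint from $\partial D_1\subset\partial V\cap B_1$. Attaching a 2-handle $N(E)$ to $V$ along $l$ has the effect that $V_2\cup N(E)$ is a 3-ball glued to $V_1$ along the disk $V\cap P$, so $V(l)\cong V_1\cong D^2\times S^1$, and $D_1$, being a meridian disk of $V_1$, remains a meridian disk of $V(l)$. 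For $D_2$ I would take any meridian disk of $V_2$ whose boundary intersects $l$ in a single point; such a disk exists in $V_2$ and is automatically disjoint from $D_1\subset V_1$.

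With all hypotheses of Lemma \ref{lem:non-sepa. disk and loop} in place, and with $D'$ essential and non-separating in $V$ satisfying $|\partial D'\cap l|=1$, the conclusion of that lemma provides the desired isotopy of $D'$ making it disjoint from $D_1$. The step demanding the most care is the placement of $l=\partial E$ so that it is both disjoint from $D_1$ and genuinely a longitude of $V_2$; this is precisely where the standing hypothesis that $K_2$ is the unknot enters, since it is this that makes $M_2$ a solid torus and $W$ a compression body of the required form.
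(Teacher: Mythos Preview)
Your proposal is correct and follows exactly the approach the paper has in mind: the paper states only that the lemma ``is an immediate consequence of Lemma \ref{lem:E is unique} and Lemma \ref{lem:non-sepa. disk and loop},'' and you have simply unpacked that implication by showing how the hypotheses of Lemma \ref{lem:non-sepa. disk and loop} are met once Lemma \ref{lem:E is unique} is used to normalise the dual disk of $D'$ to $E$. The verification that $l=\partial E$ can be placed in $\partial V\cap B_2$ as a longitude of $V_2$, that $V(l)\cong V_1$, and that $D_1$ then serves as the required meridian disk of $V(l)$ is exactly the content hidden behind the word ``immediate.''
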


The {\it primitive disk complex} $\mathcal{P}(V)$ of 
a handlebody-knot $(S^3, V)$ is defined to be the 
simplicial complex such that 
\begin{itemize}
\item
the set of vertices of $\mathcal{P}(V)$ 
consists of the ambient isotopy classes of primitive 
disks in $V$,
\item 
a collection of $k+1$ vertices spans a
$k$-simplex if and only if they admit a 
set of pairwise-disjoint representatives. 
\end{itemize}
We note that the primitive disk complex 
$\mathcal{P}(V)$ is a subcomplex of the 
{\it disk complex} of $V$. 

\begin{lemma}
\label{lem:contractibility of the primitive disk complex}
The primitive disk complex $\mathcal{P}(V)$ is contractible. 
\end{lemma}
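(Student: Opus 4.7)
The plan is to show that $\mathcal{P}(V)$ has exactly one vertex, which gives contractibility at once. First, $\mathcal{P}(V)$ is nonempty: the meridian disk $D_2$ of $V_2$, chosen disjoint from $V \cap P$, is primitive because $K_2$ is the unknot, so the essential disk in $M_2$ bounded by a longitude of $V_2$ has boundary meeting $\partial D_2$ exactly once.

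For the uniqueness, let $D'$ be an arbitrary primitive disk of $V$. By Lemma \ref{lem:D1 does not intersect primitive disks}, I may isotope $D'$ so that $D' \cap D_1 = \emptyset$. Let $V^* = V \setminus \Int N(D_1 ; V)$. Since cutting the solid torus $V_1$ along its meridian $D_1$ produces a $3$-ball, and this $3$-ball is glued to the solid torus $V_2$ along the disk $V \cap P$, the manifold $V^*$ is itself a solid torus. If $D'$ is essential in $V^*$, then $D'$ is a meridian disk of the solid torus $V^*$; since meridian disks of a solid torus are unique up to isotopy and $D_2$ is such a meridian, $D'$ is isotopic to $D_2$ in $V^*$, and hence also in $V$.

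The key step, which is also the main obstacle, is to show that $D'$ is essential in $V^*$. Suppose not; then $\partial D'$ bounds a disk $\Delta$ on $\partial V^*$, and by irreducibility of $V^*$ the $2$-sphere $D' \cup \Delta$ bounds a $3$-ball $B \subset V^*$. Write $D_1^+, D_1^-$ for the two scars of $D_1$ on $\partial V^*$, and distinguish cases according to how many of these scars lie in $\Delta$. If $\Delta$ contains neither scar, then $\Delta \subset \partial V$ and $B \subset V$, exhibiting $D'$ as boundary-parallel in $V$, contradicting essentiality. If $\Delta$ contains exactly one scar, say $D_1^+$, then $B \cup N(D_1 ; V)$ is a $3$-ball in $V$ whose other boundary disk is a pushoff of $D_1$, so $D'$ is isotopic to $D_1$ in $V$; but $D_1$ is not primitive, since $K_1$ is non-trivial and by Lemma \ref{lem:E is unique} the unique primitive disk of $E(V)$ is the dual of $D_2$, whose boundary is disjoint from $\partial D_1$. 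Finally, if $\Delta$ contains both scars, then $B \cup N(D_1 ; V)$ is a $3$-ball with a $1$-handle attached, i.e.\ a solid torus, so $D'$ separates $V$ into two solid tori; then $\partial D'$ is homologically trivial on $\partial V$, so $\#(\partial D' \cap \partial E)$ is even for every simple closed curve $\partial E$ on $\partial V$, contradicting primitivity of $D'$. Once these three cases are excluded, the essentiality of $D'$ in $V^*$ follows, and the uniqueness of meridian disks in $V^*$ finishes the proof.
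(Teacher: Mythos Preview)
Your proposal has a genuine gap: the conclusion that $\mathcal{P}(V)$ has a single vertex is false. In fact, the very next lemma in the paper (Lemma~\ref{lem:dimension of the primitive disk complex}) shows that $\mathcal{P}(V)$ is $1$-dimensional, and Lemma~\ref{lem:stabilizer of primitive disks and pairs} together with the Bass--Serre description of $\MCG_+(S^3,V)$ as a nontrivial amalgam implies that $\mathcal{P}(V)$ has infinitely many vertices. So the complex is an infinite tree, not a point.

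The error lies in the sentence ``$D'$ is isotopic to $D_2$ in $V^*$, and hence also in $V$.'' An ambient isotopy of the solid torus $V^*$ carrying one meridian disk to another will in general drag the boundary curve $\partial D'$ across the scars $D_1^+, D_1^-$. Such an isotopy does not come from an ambient isotopy of $V$: passing $\partial D'$ over a scar corresponds, after regluing the $1$-handle $N(D_1;V)$, to sliding $\partial D'$ over the handle, which genuinely changes the isotopy class of the curve on the genus-two surface $\partial V$. Concretely, the meridian curves of $V^*$ that miss both scars fall into infinitely many isotopy classes on the twice-punctured torus $\partial V^*\setminus(D_1^+\cup D_1^-)$, and hence give infinitely many isotopy classes of essential disks in $V$ disjoint from $D_1$. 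Many of these meet $\partial E$ exactly once and are therefore primitive. Your case analysis establishing that $D'$ is essential in $V^*$ is fine; it is only the last implication that fails.

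The paper's proof proceeds instead by a surgery argument: given two primitive disks $D_2$ and $D_2'$ intersecting minimally, one takes an outermost arc of $D_2\cap D_2'$ on $D_2$ and performs surgery on $D_2'$, checking that at least one of the two resulting disks is again primitive (because exactly one of the two arcs into which $\partial D_2'$ is split carries the single intersection point with $\partial E$). This ``closed under surgery'' property is precisely the hypothesis of Cho's contractibility criterion \cite[Theorem~4.2]{Cho08}, which then yields the result.
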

\begin{proof}
Let $E$ be the unique primitive disk in $E(V)$. 
Let $D_2$ and $D_2^\prime$ be primitive disks in $V$. 
By Lemma \ref{lem:E is unique}, we can assume that 
$\partial D_2$ and $\partial E$ 
($\partial D_2^\prime$ and $\partial E$, respectively) 
have a single transverse intersection in $\partial V$, and that 
$\partial D_2 \cap \partial D_2^\prime \cap \partial E = \emptyset$. 
Further, assume that $D'_2$ intersects $D_2$ minimally. 
Assume that 
$D_2 \cap D_2^\prime \neq \emptyset$. 
Let $A \subset D_2$ be a disk cut off from $D_2$ by an outermost
arc $\alpha$ of $D_2 \cap D_2^\prime$ 
in $D_2$ such that $A \cap D_2^\prime = \emptyset$. 
The boundary $\partial \alpha$ of $\alpha$ separates 
$\partial D_2$ ($\partial D_2^\prime$, respectively)  
into two arcs $\delta_1$ and $\delta_2$ 
($\delta_1^\prime$ and $\delta_2^\prime$, respectively). 
Let $F_1$ and $F_2$ be disks
obtained from surgery on $D_2^\prime$ 
along $A$. 
We can assume that $\partial A = \delta_1 \cup \alpha$, 
$\delta_1^\prime \cap l \neq \emptyset$, and 
$\partial F_i = \delta_1 \cup \delta_i^\prime$ for $i=1,2$. 
If $\delta_1$ intersects $l$, 
the slopes $\partial F_2$ and $\partial E$ 
have a single transverse intersection in $\partial V$. 
If $\delta_1$ does not intersect $l$, 
the slopes $\partial F_1$ and $\partial E$ 
have a single transverse intersection in $\partial V$. 
Therefore one of $F_1$ and $F_2$ is primitive. 
Now the lemma follows from 
Theorem 4.2 in Cho \cite{Cho08}. 
\end{proof}

\begin{lemma}
\label{lem:dimension of the primitive disk complex}
The primitive disk complex $\mathcal{P}(V)$ is $1$-dimensional. 
\end{lemma}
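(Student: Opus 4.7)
The plan is to show that $\mathcal{P}(V)$ contains no $2$-simplex, via a parity obstruction coming from algebraic intersection with the unique primitive disk $E \subset E(V)$ guaranteed by Lemma~\ref{lem:E is unique}. Suppose for contradiction that $D, D', D''$ are three pairwise disjoint, pairwise non-isotopic primitive disks in $V$.

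First I would note that every primitive disk is non-separating: if $\partial D$ were separating in $\partial V$, every simple closed curve on $\partial V$ would meet $\partial D$ geometrically in an even number of points, contradicting the single transverse intersection with $\partial E$. Consequently $\partial D, \partial D', \partial D''$ are three pairwise disjoint, pairwise non-isotopic non-separating simple closed curves on the closed genus two surface $\partial V$. I would then show that any such triple forms a pants decomposition of $\partial V$: cutting along $\partial D$ yields a genus one surface with two boundary circles; since the class of $\partial D'$ is nonzero in $H_1(\partial V; \Integer/2)$ and maps to its class in $H_1$ of the cut surface, $\partial D'$ remains non-separating there, and the second cut yields a four-holed sphere; the third curve $\partial D''$ is non-parallel to any of the four boundary circles by pairwise non-isotopy, and so splits the four-holed sphere into two pairs of pants.

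Given the pants decomposition, with orientations of $\partial D, \partial D', \partial D''$ induced from the boundary of one pant $P \subset \partial V$ (viewed as a $2$-chain), one obtains the relation
\[
\sigma [\partial D] + \sigma' [\partial D'] + \sigma'' [\partial D''] = 0 \quad \text{in } H_1(\partial V)
\]
for signs $\sigma, \sigma', \sigma'' \in \{ \pm 1 \}$. Pairing with $[\partial E]$ via the algebraic intersection form yields
\[
\sigma \cdot ( \partial D \cdot \partial E ) + \sigma' \cdot ( \partial D' \cdot \partial E ) + \sigma'' \cdot ( \partial D'' \cdot \partial E ) = 0.
\]
Each geometric intersection $\#(\partial D_i \cap \partial E)$ is one by primitivity (with $E$ the unique dual disk up to isotopy), so the algebraic intersection $\partial D_i \cdot \partial E$ has absolute value one. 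The left-hand side is therefore a sum of three $\pm 1$'s, which is odd and hence nonzero. This contradiction establishes the claim.

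The main technical point I expect to handle carefully is the verification that three pairwise disjoint, pairwise non-isotopic non-separating simple closed curves on a closed genus two surface always cut it into two pairs of pants; this geometric input is what reduces the problem to the clean algebraic parity contradiction above, while every other step is a direct calculation in $H_1(\partial V)$.
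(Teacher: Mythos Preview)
Your proof is correct and takes a genuinely different route from the paper's.

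The paper argues three-dimensionally: invoking Lemma~\ref{lem:D1 does not intersect primitive disks}, it isotopes all three primitive disks off a meridian disk $D_1$ of the solid torus $V_1$, so that they become meridian disks of the solid torus $V' = V \setminus \Int\, N(D_1;V)$. Cutting $V'$ along the three disks yields three balls, and since the attaching region of the $1$-handle $N(D_1;V)$ meets $\partial V'$ in only two disks, a pigeonhole argument forces one of the balls to miss it entirely; that ball then exhibits two of the three primitive disks as parallel in $V$.

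Your approach is purely two-dimensional and homological: you never touch $D_1$ or the solid-torus structure, and instead extract the contradiction from the intersection pairing on $H_1(\partial V)$ against the fixed class $[\partial E]$. This has the advantage of being self-contained --- you do not need Lemma~\ref{lem:D1 does not intersect primitive disks} at all, only Lemma~\ref{lem:E is unique} --- and the parity obstruction $\pm 1 \pm 1 \pm 1 \neq 0$ is very clean. The paper's approach, on the other hand, fits more organically into the reducing-sphere decomposition $V = V_1 \cup V_2$ that drives the whole section, and avoids the surface-topology digression about pants decompositions.

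Two small remarks. First, your justification that $\partial D'$ stays non-separating after cutting along $\partial D$ is phrased a bit backwards (``maps to its class in $H_1$ of the cut surface''); the clean statement is that the quotient map $\Sigma \to \partial V$ sends $[\partial D']_\Sigma$ to $[\partial D']_{\partial V} \neq 0$ in $H_1(\,\cdot\,;\Integer/2)$, hence the former is nonzero. Second, you only prove $\dim \mathcal{P}(V) \leqslant 1$; the lower bound (existence of a primitive pair) is also needed, though the paper likewise treats it as obvious from the setup.
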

\begin{proof}
It is clear that the dimension of $\mathcal{P}(V)$ is 
at least one. 
Let $D_2, D_2^\prime$ and $D_2^{\prime \prime}$ be 
pairwise disjoint primitive disks in $V$. 
By Lemma \ref{lem:D1 does not intersect primitive disks}, 
we can assume that the three disks do not 
intersect a meridian disk $D_1$ 
of the solid torus $V_1$ such that $D_1 \cap P = \emptyset$. 
Thus $D_2, D_2^\prime$ and $D_2^{\prime \prime}$ 
can be regarded as  
meridian disks of the solid torus 
$V^\prime = V \setminus \Int \thinspace 
N(D_1; V)$. 
Cut $V^\prime$ along 
$D_2 \cup D_2^\prime \cup D_2^{\prime \prime}$. 
Then $V^\prime$ decomposes into three components and 
at least one of them does not intersect $N(D_1; V)$. 
This implies that at least two of the three disks are 
parallel in $V$.  
\end{proof}

\begin{lemma}
\label{lem:cutting along a primitive disk}
The solid torus $V$ cut off by a primitive disk $D_2$ in $V$ 
is ambient isotopic to the solid torus $N(K_1)$. 
\end{lemma}
\begin{proof}
Let $E$ be the unique primitive disk in $E(V)$. 
Let $D_2$ be an arbitrary primitive disk in $V$. 
We can assume that 
$\partial D_2$ and $\partial E$ have a single 
transverse intersection. 
We note that $V \cup N( E ; E(V))$ is ambient isotopic 
to $N(K_1)$ since $V \cup N( E ; E(V))$ 
is obtained by attaching the 
3-ball $V_2 \cup N( E ; E(V))$ to $V_1 \cong N(K_1)$ along 
a disk $P \cap V$. 
The union $V \cup N( E ; E(V))$ is 
obtained from the solid torus 
$V \setminus \Int \thinspace N( D_2 ; V) $ 
by attaching a 3-ball $N( D_2 ; V) \cup N( E ; E(V)) $ 
along a disk, 
which is the union of the closure of 
$\partial N( D_2 ; V) \setminus \partial V$
and 
$N( E ; E(V)) \cap \partial V$. 
This implies 
that $V$ cut off by $D_2$ is isotopic to 
$N(K_1)$. 
\end{proof}

\begin{lemma}
\label{lem:stabilizer of primitive disks and pairs}
The group $\MCG_+(S^3, V)$ acts transitively on 
each of the sets of primitive disks of $V$ and primitive pairs of $V$. 
\end{lemma}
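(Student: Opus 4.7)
The plan is to prove both transitivity statements by viewing $V$ as the result of attaching a trivial $1$-handle to the solid torus $N(K_1)$, and then exploiting the essential uniqueness of such a trivial $1$-handle attachment inside $(S^3, N(K_1))$.

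For the first claim, given two primitive disks $D$ and $D'$ of $V$, I would begin by invoking Lemma~\ref{lem:cutting along a primitive disk} to conclude that both $V \setminus \Int N(D)$ and $V \setminus \Int N(D')$ are ambient isotopic in $S^3$ to $N(K_1)$; fix ambient isotopies $\Phi$ and $\Phi'$ of $S^3$ realizing these identifications. Under $\Phi$ (respectively $\Phi'$), the handlebody $V$ is recovered from $N(K_1)$ by attaching a $1$-handle whose co-core is the image of $D$ (respectively $D'$). By Lemma~\ref{lem:E is unique} the dual disk $E$ exists and is unique, which certifies that this $1$-handle is \emph{trivial}: its core arc can be isotoped, in $E(K_1)$, onto $\partial N(K_1)$. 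I would then show that any two trivial $1$-handles attached to $N(K_1) \subset S^3$ are related by an ambient isotopy of the pair $(S^3, N(K_1))$, since each can be slid through its co-core into a standard neighborhood of any chosen point on $\partial N(K_1)$. Composing this isotopy with $\Phi$ and $\Phi'$ produces the required element of $\MCG_+(S^3, V)$ sending $D$ to $D'$.

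For the second claim, given primitive pairs $\{D, \widetilde D\}$ and $\{D', \widetilde D'\}$, I would first use the first claim to obtain $f \in \MCG_+(S^3, V)$ with $f(D) = D'$, and then reduce to finding an element of the stabilizer of $D'$ in $\MCG_+(S^3, V)$ carrying $f(\widetilde D)$ to $\widetilde D'$. Both $f(\widetilde D)$ and $\widetilde D'$ are primitive disks disjoint from $D'$, so after cutting $V$ along $D'$ they become meridian disks of the resulting solid torus that separate the two scars of $D'$ on its boundary. The very same trivial-handle argument as above, now carried out relative to the attaching region of the $1$-handle that reconstructs $V$ from this cut solid torus (equivalently, preserving the two scars), would produce an ambient isotopy of $(S^3, N(K_1))$ taking one meridian to the other; this pulls back to the desired stabilizer element.

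The main technical obstacle will be making rigorous the claim that trivial $1$-handles on $(S^3, N(K_1))$ are unique up to ambient isotopy of the pair, and — more delicately in the second step — that this ambient isotopy can be arranged to preserve two prescribed disks (the scars) on $\partial N(K_1)$. Both reductions should follow from standard disk- and arc-sliding arguments: the core arc of a trivial $1$-handle is determined up to ambient isotopy in $E(K_1)$ (essentially by Alexander's trick inside the knot exterior and the uniqueness of a properly embedded arc into a given pair of disks on the boundary of a handlebody), but care will be needed to ensure that the isotopy respects all of the required structure simultaneously.
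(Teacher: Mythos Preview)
Your treatment of the first assertion matches the paper's: both cut along a primitive disk, identify the result with $N(K_1)$ via Lemma~\ref{lem:cutting along a primitive disk}, and then appeal to the fact that $\partial$-parallel proper arcs (equivalently, trivial $1$-handles) in $E(K_1)$ are all ambient isotopic.

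For the second assertion the paper takes a different, more direct route. Rather than first matching one disk of each pair and then working in the stabilizer, the paper uses the unique dual disk $E \subset E(V)$ from Lemma~\ref{lem:E is unique} simultaneously for both disks of a pair: each of $\partial D_2, \partial D_2'$ meets $\partial E$ in a single point, and after passing to $V \cup (E\times[0,1]) \cong N(K_1)$ and joining along short arcs on $E \times \{0,1\}$, the whole pair is encoded by a single meridian curve of $N(K_1)$. Transitivity then reduces to the statement that meridians of $N(K_1)$ are ambient isotopic in $(S^3,N(K_1))$. Your reduction-to-stabilizer approach is natural and can be made to work, but the step you flag as delicate --- arranging the ambient isotopy to preserve the two scars --- genuinely requires the primitivity of $\widetilde D'$ (equivalently, that $\partial\widetilde D'$ meets $\partial E$ once), not merely that $\widetilde D'$ is a meridian of the cut solid torus disjoint from the scars; without that constraint there are infinitely many inequivalent such meridians. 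Once you feed $E$ back in to enforce this, you are essentially carrying out the paper's argument in disguise, so the paper's route is the more economical one.
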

\begin{proof}
Let $D_2$ and $D_2^\prime$ be primitive disks in $V$. 
By Lemma \ref{lem:cutting along a primitive disk}, 
we can regard $N(D_2; V)$ and $N(D_2^\prime; V)$ 
as regular neighborhoods of boundary-parallel proper 
arcs in $E(K_1)$. 
Since any two such arcs in $E(K_1)$ are ambient isotopic 
in $E(K_1)$, we get the first assertion. 

Let $\{ D_2 , D_2^\prime \}$ and 
$\{ D_3 , D_3^\prime \}$ be primitive pairs in $V$. 
We can assume that 
each of these primitive disks has 
a single transverse intersection with $\partial E$. 
Set $p_2 = \partial D_2 \cap E$ and 
$q_2 = \partial D_2^\prime \cap E$ 
($p_3 = \partial D_3 \cap E$ and 
$q_3 = \partial D_3^\prime \cap E$, respectively). 
Let $\alpha_2$ ($\alpha_3$, respectively) be a 
simple proper arc in $E$ connecting 
$p_2$ and $q_2$ ($p_3$ and $q_3$, respectively). 

Regard $N(E; E(V))$ as a product $E \times [0,1]$. 
Then the union 
of the arcs $(\partial D_2 \cup \partial D_2^\prime) \setminus 
(\partial E \times (0,1) )$ 
($(\partial D_3 \cup \partial D_3^\prime) \setminus 
(\partial E \times (0,1) )$, respectively) 
and the arcs $\alpha_2 \times \{ 0 , 1 \}$ 
($\alpha_3 \times \{ 0 , 1 \}$, respectively) 
is a simple closed curve in the boundary of the 
solid torus $V \cup (E \times [0,1]) \cong E(K_1)$ bounding 
a disk in  $V \cup (E \times [0,1])$. 
Since any simple closed curve in $\partial E(K_1)$ 
bounding a disk in $E(K_1)$ are ambient isotopic, 
we get the second assertion. 
\end{proof}

Let $\mathcal{P}^\prime(V)$ be the 
first barycentric subdivision of $\mathcal{P}(V)$. 
By Lemma \ref{lem:stabilizer of primitive disks and pairs}, 
the quotient of $\mathcal{P}^\prime(V)$ by the action of 
$\MCG_+(S^3, V)$ is a single edge. 
For convenience, we will not distinguish 
disks and homeomorphisms from their ambient isotopy
classes in our notations.
Fix two vertices $v_1 = {D_2}$ and $v_2 = \{ D_2 , D_2^\prime \} $ 
that are endpoints of an edge $e$ of $\mathcal{P}^\prime(V)$.
We note that 
$\Stab_{\MCG_+ (S^3, V)}(v_1) = \MCG_+ (S^3, V , D_2)$, 
$\Stab_{\MCG_+ (S^3, V)}(v_2) = \MCG_+ (S^3, V , D_2 \cup D_2^\prime )$ 
and  
$\Stab_{\MCG_+ (S^3, V)}(e) = \MCG_+ (S^3, V , D_2 , D_2^\prime)$.  

By the theory of groups acting on trees due to Bass and Serre 
\cite{Ser77}, 
the mapping class group $\MCG_+ (S^3, V)$ is a free product with 
amalgamation 
\[ \MCG_+ (S^3, V , D_2) *_{\MCG_+ (S^3, V , D_2 , D_2^\prime)} 
\MCG_+ (S^3, V , D_2 \cup D_2^\prime ) .\]


Regarding $N(E ; E(V))$ as a product $E \times [0,1]$, 
set $E_+ = E \times \{ 1 \}$ and  $E_- = E \times \{ 0 \}$. 
Set $\alpha = \partial D_2 \setminus (\partial E \times \partial I)$ and 
$\alpha^\prime = \partial D_2^\prime \setminus (\partial E \times \partial I)$. 
Recall that $V \cup (E \times [0,1])$ 
and $N(K_1)$ are ambient isotopic. 

 
Let $p$ be a dot of $(E(K_1) , \partial N(K_1))$. 
By Proposition \ref{lem:isomorphism in the case of knots}, 
we have $\MCG(S^3 , N(K_1) , p) \cong \MCG(E(K_1) , p)$.  
Let $P : \pi_1(\partial N(K_1) , p) 
\to \MCG ( E(K_1), p )$ 
be the point-pushing map. 
The map $P$ induces a map 
$\bar{P} : \pi_1(\partial N(K_1) , p) 
/ \ker P 
\to \MCG ( E(K_1), p )$. 
Then there exists the Birman exact sequence 
\[ 1 \to \pi_1 ( \partial N(K_1) , p ) / 
 \ker P  \xrightarrow{\bar{P}} 
\MCG ( E(K_1), p ) \to 
\MCG ( E(K_1) )  \to 1 . 
\]
Since $\pi_1 ( \partial N(K_1) , p )$ is an abelian group, 
it is obvious that the group $\pi_1(\partial N(K_1) , p) 
/ \ker P$ is finitely presented. 
Hence $\MCG ( E(K_1), p )$ 
is finitely presented by Lemma 
\ref{lem:exact sequence and finite presentation}. 
\begin{eqnarray*} 
\MCG_+ (S^3, V , D_2) & \cong & 
\MCG_+ (S^3, V \cup (E \times [0,1]) , E_+ \cup \alpha \cup E_-) \\ 
& \cong & 
\MCG_+ (S^3, N(K_1) , p) \rtimes \Integer \\ 
& \cong & 
\MCG_+ (E(K_1) , p) \rtimes \Integer , 
\end{eqnarray*} 
where $\Integer$ is generated by an element of 

the Dehn twist about the disk 
$N(E_+ \cup \alpha \cup E_-; \partial (V \cup (E \times [0,1])))$.  
Recall the proof of 
Lemma \ref{lem:mapping class group of a 3-manifold with dotted boundary-pattern}. 
Therefore the group $\MCG_+ (S^3, V , D_2) $ 
is finitely presented by Lemma 
\ref{lem:exact sequence and finite presentation}. 

Let $D$ be a meridian disk of $N(K_1)$. 
Set $A = N( \partial D ; \partial N(K_1) )$. 
Assume that the dot $p$ is contained in $\Int \thinspace A$.  
Then we have 
\begin{eqnarray*} 
\MCG_+ (S^3, V , D_2 \cup D_2^\prime ) 
& \cong & \MCG_+ (S^3 , V \cup (E \times [0,1]) ,  
E_+ \cup \alpha \cup E_- \cup \alpha^\prime ) \\ 
& \cong & 
\MCG_+ (S^3, N(K_1) , A , p) 
\rtimes (\Integer / 2 \Integer) , 
\end{eqnarray*} 
where $\Integer / 2 \Integer$ is generated by 
an element of $\MCG_+ (S^3 , V \cup (E \times [0,1]) ,  
E_+ \cup \alpha \cup E_- \cup \alpha^\prime )$ 
that maps $E_+$ to $E_-$ and $\alpha$ to $\alpha'$. 
Finally, we have 
\begin{eqnarray*} 
\MCG_+ (S^3, V , D_2 , D_2^\prime) 
& \cong & \MCG_+ (S^3 , V \cup (E \times [0,1]) ,  
E_+ \cup \alpha \cup E_- \cup \alpha^\prime , \alpha) \\ 
& \cong & 
\MCG_+ (S^3, N(K_1) , A , p) . 
\end{eqnarray*} 
Hence $\MCG_+ (S^3, V , D_2 \cup D_2^\prime )$ and 
$\MCG_+ (S^3, V , D_2 , D_2^\prime) $ 
are finitely presented as well by 
Lemmas \ref{lem:exact sequence and finite presentation} 
and \ref{lem:mapping class group of a 3-manifold 
with dotted boundary-pattern 
whose dots are in a disk or an annulus}. 

Thus we obtain the following theorem.  

\begin{theorem}
\label{thm:finite presentation of non-trivial trivial case}
If $K_1$ is non-trivial and $K_2$ is trivial, 
then the group $\MCG(S^3, V)$ 
is finitely presented. 
\end{theorem}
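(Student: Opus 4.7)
The plan is to apply Bass--Serre theory to the action of $\MCG_+(S^3, V)$ on the primitive disk complex $\mathcal{P}(V)$. By Lemmas \ref{lem:contractibility of the primitive disk complex} and \ref{lem:dimension of the primitive disk complex}, the complex $\mathcal{P}(V)$ is a tree, and by Lemma \ref{lem:stabilizer of primitive disks and pairs}, the quotient of the first barycentric subdivision by $\MCG_+(S^3, V)$ is a single edge. Choosing representatives $D_2$ of a primitive disk and $\{D_2, D_2^\prime\}$ of a primitive pair, Bass--Serre theory will therefore yield the amalgamated product decomposition
\[ \MCG_+(S^3, V) \;\cong\; \MCG_+(S^3, V, D_2) *_{\MCG_+(S^3, V, D_2, D_2^\prime)} \MCG_+(S^3, V, D_2 \cup D_2^\prime). \]

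Next I would identify each of the three stabilizers with an extension of a mapping class group of the knot complement $E(K_1)$ (with suitable dot or spot data) by a cyclic or finite group. The key geometric input is Lemma \ref{lem:cutting along a primitive disk}: cutting $V$ along a primitive disk and reattaching the product $E \times [0,1]$ along $\partial E \times [0,1]$ recovers a solid torus ambient isotopic to $N(K_1)$ in $S^3$. Under this correspondence, an automorphism of $(S^3, V)$ that preserves the primitive disk $D_2$ setwise translates into an automorphism of $(S^3, N(K_1))$ preserving a distinguished point $p \in \partial N(K_1)$, with the kernel generated by the Dehn twist along $D_2$. This realizes $\MCG_+(S^3, V, D_2)$ as an extension of $\MCG_+(S^3, N(K_1), p)$ by $\Integer$, and an analogous analysis (tracking two arcs on $\partial N(K_1)$ sitting inside a meridian annulus $A$) yields corresponding descriptions for the edge and pair stabilizers as extensions involving $\MCG_+(S^3, N(K_1), A, p)$.

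Using Proposition \ref{lem:isomorphism in the case of knots} and Theorem \ref{thm:finitely presentability by McCullough}, the group $\MCG_+(E(K_1))$ is finitely presented; a Birman-type exact sequence together with Lemma \ref{lem:exact sequence and finite presentation} then promotes this to finite presentability of $\MCG_+(E(K_1), p)$. Lemma \ref{lem:mapping class group of a 3-manifold with dotted boundary-pattern whose dots are in a disk or an annulus} handles the addition of the annulus $A$. Applying Lemma \ref{lem:exact sequence and finite presentation} once more to the extensions above shows that all three stabilizers in the amalgamated product are finitely presented. Since an amalgamated product of finitely presented groups over a finitely generated group is finitely presented, $\MCG_+(S^3, V)$ is finitely presented. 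A final application of Lemma \ref{lem:exact sequence and finite presentation} to the index-at-most-two inclusion $\MCG_+(S^3, V) \hookrightarrow \MCG(S^3, V)$ will conclude the argument.

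The main obstacle is the bookkeeping in the second step: verifying that the cutting construction of Lemma \ref{lem:cutting along a primitive disk} intertwines the action on $(S^3, V)$ with the action on $(S^3, N(K_1), p)$ in both directions, and that the kernels of the resulting homomorphisms are precisely the $\Integer$ (respectively $\Integer/2\Integer$) factors generated by Dehn twists about $D_2$ and by a swap exchanging $D_2$ and $D_2^\prime$. Once this correspondence is established, the assembly of the finite presentation from Bass--Serre theory and Lemma \ref{lem:exact sequence and finite presentation} is routine.
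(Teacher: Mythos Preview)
Your proposal is correct and follows essentially the same route as the paper: the paper likewise applies Bass--Serre theory to the tree $\mathcal{P}(V)$, obtains the amalgamated product over $\MCG_+(S^3,V,D_2,D_2')$, and then uses the cutting construction of Lemma~\ref{lem:cutting along a primitive disk} together with Proposition~\ref{lem:isomorphism in the case of knots}, the Birman exact sequence, and Lemma~\ref{lem:mapping class group of a 3-manifold with dotted boundary-pattern whose dots are in a disk or an annulus} to identify the stabilizers with (semi)direct products involving $\MCG_+(E(K_1),p)$ and $\MCG_+(S^3,N(K_1),A,p)$. The bookkeeping you flag as the main obstacle is exactly what the paper carries out, and your final appeal to Lemma~\ref{lem:exact sequence and finite presentation} for the passage from $\MCG_+$ to $\MCG$ matches the paper as well.
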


\begin{example}
Consider the case where $K_1$ is a trefoil and 
$K_2$ is the unknot as was depicted in Figure \ref{fig:trefoil-unknot}. 
It is well known that 
$\MCG_+(S^3, N(K_1))$ is homomorphic to the 
group $\Integer / 2 \Integer$ 
generated by an automorphism $ f $ shown in Figure 
\ref{fig:trefoil_tube}, see e.g. Kawauchi \cite{Kaw96}. 
\begin{figure}[htbp]
\begin{center}
\includegraphics[width=4cm,clip]{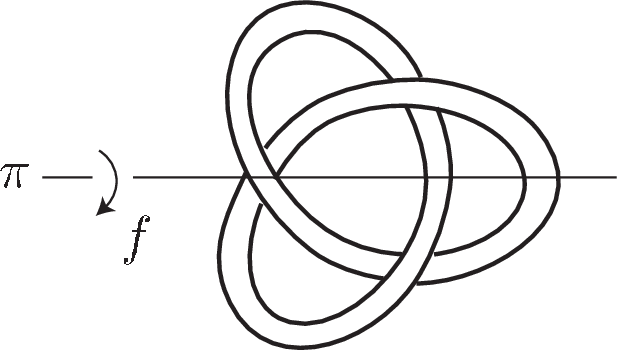}
\caption{The generator $f$ of $\MCG(S^3, N(K_1))$.}
\label{fig:trefoil_tube}
\end{center}
\end{figure}

\begin{figure}[htbp]
\begin{center}
\includegraphics[width=10cm,clip]{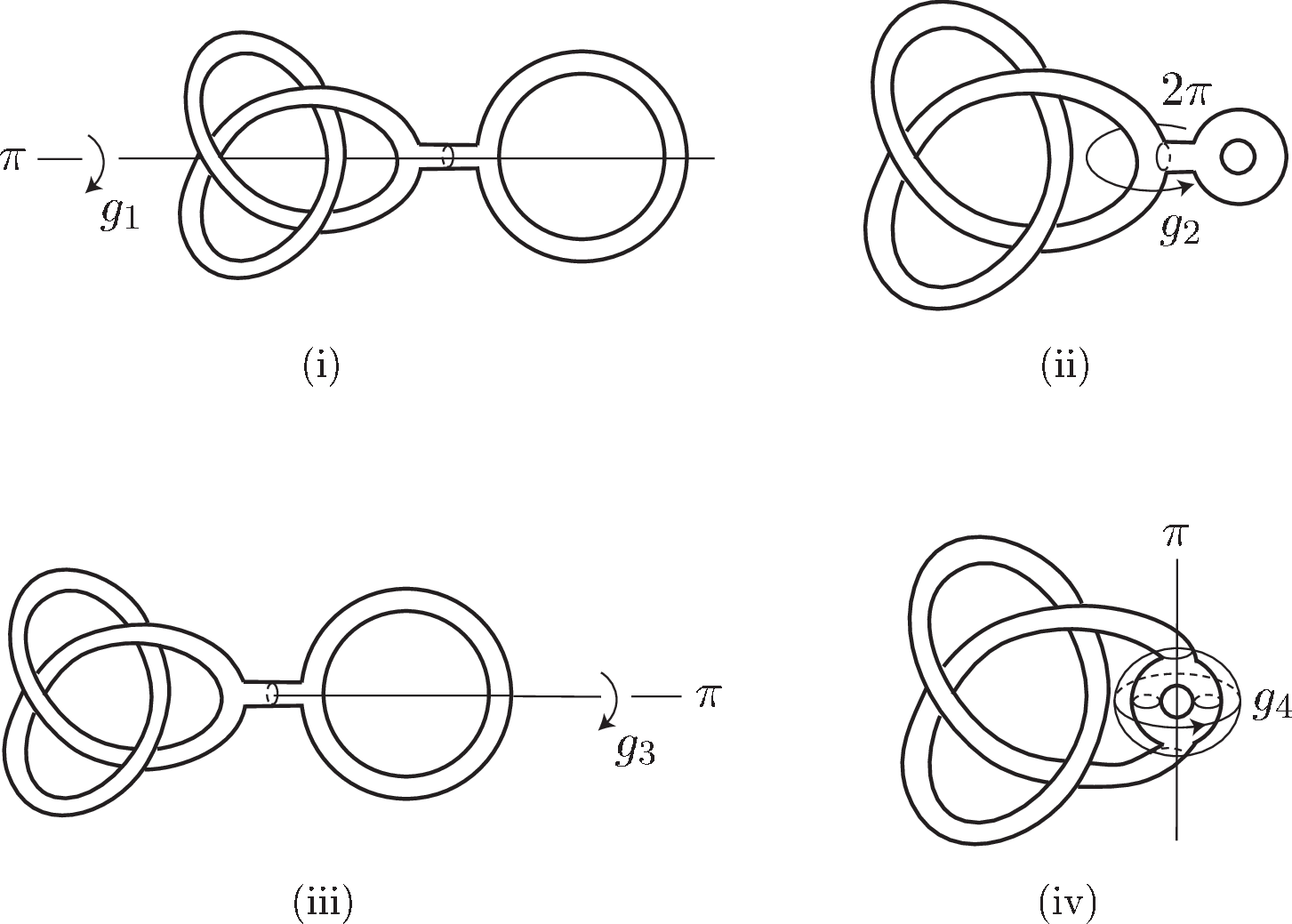}
\caption{Automorphisms $g_1 , g_2 , g_3$ and $g_4$.}
\label{fig:trefoil-unknot_generators}
\end{center}
\end{figure}
Let $g_1$, $g_2$, $g_3$, $g_4$ be elements of 
$\MCG(S^3, V)$ shown in Figure 
\ref{fig:trefoil-unknot_generators}. 
We note that $g_2 = {g_4}^2$. 
The element $g_1$ has order two while 
$g_3$ has infinite order. 
Since the trefoil is a fibered knot with the monodromy of order six, 
$g_2$ has order six.  

Let $D_2$ and $D_2^\prime$ be as in the above argument. 
We can compute the vertex and edge stabilizers of the tree  
$\mathcal{P}^\prime (V)$ as follows$:$   
\begin{eqnarray*}
\MCG_+(S^3, V, D_2) 
&=& \langle g_1, g_2, g_3 \mid 
{g_1}^2,  {g_2}^6 ,  
g_1 g_2 {g_1}^{-1} g_2 , 
g_1 g_3 {g_1}^{-1} {g_3}^{-1}, 
g_2 g_3 {g_2}^{-1} {g_3}^{-1}
\rangle \\
&\cong& ( (\Integer / 6 \Integer) 
\rtimes (\Integer / 2 \Integer)) \times \Integer , 
\end{eqnarray*}
\begin{eqnarray*}
\MCG_+(S^3, V, D_2 \cup D_2^\prime) &=& \langle g_1, g_2, g_4 \mid 
{g_1}^2,  {g_2}^6 , 
{g_4}^2 {g_2}^{-1} ,  g_1 g_4 {g_1}^{-1} g_4 \rangle \\
&\cong& (\Integer / 12 \Integer) \rtimes (\Integer / 2 \Integer) , 
\end{eqnarray*}
\begin{eqnarray*}
\MCG_+(S^3, V, D_2, D_2^\prime) = \langle g_1, g_2 \mid 
{g_1}^2,  {g_2}^6 , ~ 
g_1 g_2 {g_1}^{-1} g_2 \rangle 
\cong (\Integer / 6 \Integer) \rtimes (\Integer / 2 \Integer) . 
\end{eqnarray*}

Hence, we have 
\begin{eqnarray*}
\MCG_+(S^3, V) &=& 
\MCG_+ (S^3, V , D_2) *_{\MCG_+ (S^3, V , D_2 , D_2^\prime)} 
\MCG_+ (S^3, V , D_2 \cup D_2^\prime ) \\ 
&=& \langle g_1, g_3, g_4 \mid 	
{g_1}^2 , {g_4}^{12}, 
g_1 g_3 {g_1}^{-1} {g_3}^{-1}, 
g_1 g_4 {g_1}^{-1} g_4, 
g_3 g_4 {g_3}^{-1} {g_4}^{-1}
\rangle . 
\end{eqnarray*}

\end{example}

\vspace{1em}

Combining 
Theorems \ref{thm:genus two Goeritz group is finitely presented}, 
\ref{thm:finite presentation of non-trivial trivial case}
and Proposition \ref{prop:finite presentation of non-trivial non-trivial case}
we deduce the following conclusive theorem for the 
case of reducible handlebody-knots of genus two. 

\begin{theorem}
\label{thm:symmetry group of reducible handlebody-knots of genus two}
Let $V \subset S^3$ be a reducible handlebody-knot of genus two. 
Then the symmetry group $\MCG(S^3 , V)$ is 
finitely presented. 
\end{theorem}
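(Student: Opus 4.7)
The plan is straightforward case analysis: since $V$ is reducible, it admits a reducing sphere $P$ cutting $V$ into two solid tori $V_1, V_2$ with cores $K_1, K_2$, and the three possibilities for the pair $(K_1, K_2)$---both trivial, exactly one trivial, both nontrivial---have each been treated already in the paper.

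First I would invoke the definition of a reducing sphere to decompose $S^3 = B_1 \cup_P B_2$ with $V_i = V \cap B_i$ a solid torus of core $K_i$, and observe that up to reordering there are exactly three cases. In the case where both $K_1$ and $K_2$ are trivial knots, the handlebody-knot $V$ is isotopic to the standard genus two handlebody in $S^3$, so $\MCG_+(S^3, V)$ is finitely presented by Theorem \ref{thm:genus two Goeritz group is finitely presented}. In the case where $K_1$ is nontrivial and $K_2$ is trivial (or vice versa), Theorem \ref{thm:finite presentation of non-trivial trivial case} gives finite presentability directly. In the case where both $K_1$ and $K_2$ are nontrivial, Proposition \ref{prop:finite presentation of non-trivial non-trivial case} applies.

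To pass from $\MCG_+(S^3, V)$ to the full symmetry group $\MCG(S^3, V)$, I would use the exact sequence
\[
1 \to \MCG_+(S^3, V) \to \MCG(S^3, V) \to \MCG(S^3, V)/\MCG_+(S^3, V) \to 1,
\]
where the quotient has order at most $2$, and conclude finite presentability of $\MCG(S^3, V)$ by Lemma \ref{lem:exact sequence and finite presentation}.

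Since all substantive work has been carried out in the preceding sections, there is no real obstacle here; the only thing to verify carefully is that the three cases really are exhaustive, which is immediate from the observation that each $V_i$ is a solid torus whose core is either knotted or unknotted in $B_i$. Hence the proof is essentially a bookkeeping step assembling Theorem \ref{thm:genus two Goeritz group is finitely presented}, Theorem \ref{thm:finite presentation of non-trivial trivial case}, and Proposition \ref{prop:finite presentation of non-trivial non-trivial case} with Lemma \ref{lem:exact sequence and finite presentation}.
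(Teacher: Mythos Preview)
Your proposal is correct and matches the paper's approach exactly: the paper simply states that the theorem follows by combining Theorem \ref{thm:genus two Goeritz group is finitely presented}, Theorem \ref{thm:finite presentation of non-trivial trivial case}, and Proposition \ref{prop:finite presentation of non-trivial non-trivial case}, which is precisely the case analysis you carry out. Your additional remark invoking Lemma \ref{lem:exact sequence and finite presentation} to pass from $\MCG_+$ to $\MCG$ in the trivial case is a minor elaboration the paper leaves implicit, but it is correct and in the same spirit.
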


\section{The symmetry groups of irreducible handlebody-knots of genus two} 
\label{sec:The symmetry groups of irreducible handlebody-knots of genus two}

A handlebody-knot of genus at least two is said to be {\it hyperbolic} 
if its exterior admits a complete hyperbolic structure with geodesic boundary. 
By Thurston's hyperbolization theorem \cite{Kap09} and 
equivariant torus theorems \cite{Hol91} 
a compact, orientable, irreducible, boundary-irreducible, 
atoroidal and acylindrical 3-manifold admits 
a hyperbolic structure with geodesic boundary.  
As was remarked in Section \ref{sec:Handlebody-knots}, 
if $(S^3, V)$ is an irreducible handlebody-knot of genus two, then 
$E(V)$ is boundary-irreducible. 
Hence we have the following lemma.   
\begin{lemma}
\label{lemma:hyperbolic handlebody-knots}
An irreducible handlebody-knot $(S^3, V)$ of genus two is hyperbolic if 
and only if $E(V)$ contains neither essential tori nor essential annuli.  
\end{lemma}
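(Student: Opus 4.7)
The plan is to prove the two implications separately, using the hyperbolization statement recalled just above the lemma for the ``if'' direction and a doubling argument for the ``only if'' direction.

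For the ``if'' direction, suppose $E(V)$ contains neither essential tori nor essential annuli. Since $V \subset S^3$, any $2$-sphere in $E(V)$ bounds a $3$-ball in $S^3$ by the Alexander theorem, and the irreducibility of $V$ together with a standard innermost-disk argument forces this ball to lie in $E(V)$; hence $E(V)$ is irreducible. By the result of \cite{BF} recalled in Section~\ref{sec:Handlebody-knots}, $E(V)$ is also $\partial$-irreducible. Thus $E(V)$ is a compact, orientable, irreducible, $\partial$-irreducible, atoroidal and anannular $3$-manifold, so by the statement cited from \cite{Kap09} and \cite{Hol91} just before the lemma, $E(V)$ carries a complete hyperbolic structure with totally geodesic boundary. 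By definition $V$ is hyperbolic.

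For the ``only if'' direction, assume $V$ is hyperbolic, so $E(V)$ carries a complete hyperbolic structure with totally geodesic boundary. I would use the doubling trick: let $DE(V) = E(V) \cup_{\partial E(V)} E(V)$ be the double of $E(V)$ along its boundary. Since the gluing is along a totally geodesic surface, the two hyperbolic metrics extend across the seam to give a complete hyperbolic metric on the closed $3$-manifold $DE(V)$. Any essential torus $T \subset E(V)$ remains essential in $DE(V)$: non-boundary-parallelism is automatic since $\partial DE(V) = \emptyset$, and incompressibility in the double follows from a standard innermost-circle argument on $D \cap \partial E(V)$ for a hypothetical compressing disk $D$, using the irreducibility of $E(V)$ and the incompressibility of $\partial E(V)$ established above. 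Similarly, an essential annulus $A \subset E(V)$ doubles to a torus $DA \subset DE(V)$, whose incompressibility follows by the analogous argument from the $\pi_1$-injectivity of $A$. In either case we contradict the atoroidality of the closed hyperbolic manifold $DE(V)$, whose fundamental group is word-hyperbolic and therefore contains no $\Integer \oplus \Integer$ subgroup.

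The main obstacle is the innermost-circle argument showing that incompressibility of tori and annuli is preserved by doubling; this is technical but standard, and relies crucially on the irreducibility and $\partial$-irreducibility of $E(V)$ established in the ``if'' direction. Once this is in hand, the two implications combine to give the equivalence claimed in the lemma.
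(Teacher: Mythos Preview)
Your argument is correct. The paper does not give a separate proof of this lemma; it simply states the lemma as a consequence of the preceding paragraph, which invokes Thurston's hyperbolization theorem for compact, orientable, irreducible, $\partial$-irreducible $3$-manifolds together with the result of \cite{BF} that irreducibility of a genus-two handlebody-knot is equivalent to $\partial$-irreducibility of its exterior. In other words, the paper treats both implications as part of the cited hyperbolization package: the existence direction gives your ``if'', and the well-known converse (hyperbolic with geodesic boundary implies atoroidal and anannular) gives the ``only if''.

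Your ``if'' direction is identical to the paper's. For the ``only if'' direction you do something genuinely different: rather than citing the converse of the hyperbolization theorem as a black box, you give a direct doubling argument, producing a closed hyperbolic manifold $DE(V)$ and deriving a contradiction from an essential torus or annulus via the absence of $\Integer\oplus\Integer$ in a word-hyperbolic group. This is a standard and self-contained way to see that direction, and it buys you independence from the precise form in which the hyperbolization theorem is stated in \cite{Kap09}. Two small remarks: your appeal to ``the irreducibility of $V$'' when showing $E(V)$ is irreducible is unnecessary (connectedness of $V$ and the Sch\"onflies theorem in $S^3$ suffice), and the innermost-arc argument showing that the doubled annulus $DA$ is incompressible in $DE(V)$ genuinely uses the $\partial$-incompressibility of $A$, not just its $\pi_1$-injectivity, so you should make that explicit.
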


\begin{proposition}
\label{prop:symmetry groups of hyperbolic handlebody-knots}
If a handlebody-knot $(S^3, V)$ is hyperbolic, then 
$\MCG(S^3, V)$ is finite. 
\end{proposition}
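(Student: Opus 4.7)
The plan is to combine Lemma \ref{lem:injectivity of the map from MCG_+(S3, V) to MCG+(E(V))} with Mostow--Prasad rigidity for hyperbolic 3-manifolds with totally geodesic boundary and with Waldhausen's theorem for Haken 3-manifolds. Since $\MCG_+(S^3, V)$ is a subgroup of index at most two in $\MCG(S^3, V)$, finiteness of the latter reduces to finiteness of the former; and by Lemma \ref{lem:injectivity of the map from MCG_+(S3, V) to MCG+(E(V))}, the group $\MCG_+(S^3, V)$ injects into $\MCG_+(E(V))$. Thus the task reduces to showing that $\MCG_+(E(V))$ is finite.

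Because $V$ is hyperbolic, $E(V)$ carries a complete finite-volume hyperbolic structure whose boundary (a closed orientable surface of genus two) is totally geodesic, possibly together with toral cusps coming from any knotted loop components of a spine of $V$. I would then invoke the version of Mostow--Prasad rigidity adapted to hyperbolic 3-manifolds with totally geodesic boundary---most cleanly obtained by doubling $E(V)$ along $\partial E(V)$ and applying classical Mostow--Prasad rigidity to the resulting cusped hyperbolic 3-manifold, then descending under the $\Integer/2\Integer$ symmetry---to conclude that $\Isom(E(V))$ is a finite group and that every proper self-homotopy-equivalence of $E(V)$ is properly homotopic to an isometry.

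Since $V$ is an irreducible handlebody-knot of genus two, $E(V)$ is compact, irreducible, $\partial$-irreducible, and sufficiently large. Waldhausen's theorem then guarantees that homotopic self-homeomorphisms of $E(V)$ are isotopic, so the natural homomorphism $\MCG_+(E(V)) \to \mathrm{Out}(\pi_1(E(V)))$ is injective. Its image lies in the finite subgroup identified with $\Isom(E(V))$ by the previous step, so $\MCG_+(E(V))$ is finite and therefore so is $\MCG(S^3, V)$. The main obstacle lies in justifying the geodesic-boundary form of Mostow--Prasad rigidity: one must check that every orientation-preserving self-homeomorphism of $E(V)$ preserves the peripheral structure (the geodesic boundary together with the toral cusps) so that it does descend to the doubled manifold in a symmetric fashion, and that the resulting $\pi_1$-automorphism of the double is in fact induced by an isometry equivariant under the doubling involution.
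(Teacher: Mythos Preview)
Your proposal is correct and takes essentially the same route as the paper: both reduce via Lemma \ref{lem:injectivity of the map from MCG_+(S3, V) to MCG+(E(V))} to showing that $\MCG_+(E(V))$ is finite, and then invoke hyperbolic rigidity. The paper simply cites Kojima \cite{Koj88} for this latter fact, which packages precisely the Mostow--Prasad plus Waldhausen argument you spell out; your aside about toral cusps is unnecessary (since $\partial E(V)=\partial V$ is a single closed surface of genus $\geq 2$) but harmless.
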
 
\begin{proof}
The mapping class group $\MCG_+(E(V))$ is finite 
due to the well-known fact that 
the isometry group of a compact hyperbolic manifold is finite, 
see Kojima \cite{Koj88}. 
It follows from 
Lemma \ref{lem:injectivity of the map from MCG_+(S3, V) to MCG+(E(V))} 
that $\MCG_+(S^3, V)$ is also finite, hence so is $\MCG(S^3, V)$. 
\end{proof}

Let $(S^3, V)$ be a handlebody-knot of genus two. 
In the following, we always assume that $(S^3, V)$ is irreducible. 
By Lemma \ref{lemma:hyperbolic handlebody-knots} and 
Proposition \ref{prop:symmetry groups of hyperbolic handlebody-knots}, 
it remains to consider the case where 
$E(V)$ contains an essential torus or an essential annulus. 
In the former case, $\MCG(S^3, V)$ is always an infinite group 
since the twist along an essential torus in $E(V)$ produces 
infinitely many distinct elements of $\MCG(E(V))$. 
On the contrary, we do not have any example of the latter case 
with $\MCG(S^3, V)$ an infinite group at the present time. 
In the following, we will provide a somewhat technical sufficient condition 
for the group $\MCG(S^3, V)$ to be a finite group.

Let $(S^3, V_0)$ be a trivial handlebody-knot of genus two. 
A non-separating simple closed curve on $\partial V_0$ 
is said to be {\it primitive} 
if it bounds a primitive disk in $V_0$. 
A separating simple closed curve on $\partial V_0$ 
is said to be {\it primitive} 
if it bounds disks in both $V_0$ and $E(V_0)$. 

\begin{lemma}
\label{lem:mcg of S3, an unknotting hdbdy and a proper annulus}
Let $(S^3, V_0)$ be a trivial handlebody-knot of genus two. 
Let $A_0$ be a non-separating annulus properly embedded in $V_0$. 
Set $\partial A_0 = C_0 \cup C_1$. 
Suppose that $A_0$ is compressible 
and that 
$C_0$ $($say$)$ is not primitive. 
Then $\MCG(S^3, V_0, A_0)$ is a finite group. 
\end{lemma}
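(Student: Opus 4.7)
The plan is to extract from $(V_0, A_0)$ a canonical solid torus $S \subset V_0$ with core knot $k \subset S^3$, and then to analyze $\MCG(S^3, V_0, A_0)$ via its natural map to the (finite) symmetry group of $k$.

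First, using that $V_0$ is irreducible, the compressing disk $D$ for $A_0$ is unique up to ambient isotopy: $\partial D$ is forced to be isotopic to the core of $A_0$, and any two such compressing disks cobound a $2$-sphere which bounds a $3$-ball. Compressing $A_0$ along $D$ yields two parallel disks $D_0, D_1 \subset V_0$ with $\partial D_i = C_i$, and the $2$-sphere $A_0 \cup D_0 \cup D_1$ bounds a unique $3$-ball $B \subset V_0$. The disks $D_0, D_1$ cobound a product $3$-ball $P \subset V_0$ whose lateral annulus is the unique thin annular region $A' \subset \partial V_0$ between $C_0$ and $C_1$. Since $A_0$ is non-separating, and in particular not $\partial$-parallel, $B \neq P$, and the union $S := B \cup_{D_0 \cup D_1} P$ is a solid torus in $V_0$ with $S \cap \partial V_0 = A'$ and meridians $C_0, C_1$. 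Let $k$ be the core of $S$, regarded as a knot in $S^3$.

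All of $D, D_0, D_1, B, P, S, k$ are uniquely determined up to isotopy by $(V_0, A_0)$, so every $f \in \MCG(S^3, V_0, A_0)$ preserves them up to isotopy. Taking $N(k) = S$, we obtain a natural homomorphism
\[
\Phi \colon \MCG(S^3, V_0, A_0) \longrightarrow \MCG(S^3, N(k))
\]
whose codomain, being the symmetry group of a knot in $S^3$, is a finite group. It remains to bound $\ker \Phi$. A representative of a kernel class may be isotoped within its class to restrict to the identity on $S$, so that its restriction to $E(k) = W \cup_{\partial V_0 \setminus \Int A'} E(V_0)$, with $W := V_0 \setminus \Int S$, is an automorphism preserving this decomposition and identity on $\partial E(k)$. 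The hypothesis that $C_0$ is not primitive provides the geometric non-triviality that renders this decomposition essential in $E(k)$ and forces a non-trivial JSJ structure; combined with McCullough's theorem applied rel $\partial E(k)$ and rigidity of the pieces (Mostow rigidity on hyperbolic pieces, explicit control on Seifert pieces), this shows $\ker \Phi$ is finite.

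The principal obstacle is this kernel analysis: one must give a precise form to the claim that non-primitivity of $C_0$ induces enough rigidity on the decomposition $E(k) = W \cup E(V_0)$ to bound the rel-boundary mapping class group, and also justify carefully the adjustment of a kernel representative to the identity on $S$ while preserving its class in $\MCG(S^3, V_0, A_0)$. A cleaner route may be to view $(V_0, k)$ as a tunnel-number-one presentation of $k$ with a non-primitive unknotting tunnel, and invoke existing rigidity results from that theory directly.
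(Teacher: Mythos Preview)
Your construction of the solid torus $S$ and the map $\Phi$ to the knot symmetry group is reasonable, and the claim that $\MCG(S^3,N(k))$ is finite is correct (though it deserves a reference, since it relies on geometrization). However, the proof as written has a genuine gap precisely where you flag it: the kernel of $\Phi$. You never actually establish that a class in $\ker\Phi$ can be represented, \emph{within its $\MCG(S^3,V_0,A_0)$--class}, by a map that is the identity on $S$; and even granting that, the appeal to ``rigidity of the pieces'' of $E(k)=W\cup E(V_0)$ is not an argument, since $E(V_0)$ is a genus--two handlebody and $W$ is a compression body, neither of which is rigid in any useful sense. The hypothesis that $C_0$ is non-primitive is invoked only atmospherically; you have not shown how it forces the rel-boundary mapping class group of this decomposition to be finite. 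So the proposal is incomplete at its central step.

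The paper's argument is much shorter and avoids the kernel problem entirely. One observes that the compressing disk for $A_0$ is unique up to isotopy, so the pair $\{D_0,D_1\}$ is canonically attached to $A_0$; this gives a homomorphism $\MCG(S^3,V_0,A_0)\to\MCG(S^3,V_0,D_0,D_1)$, and one then quotes a result of Cho--McCullough from their work on the tree of knot tunnels to the effect that $\MCG(S^3,V_0,D_0,D_1)$ is finite whenever $D_0$ is not primitive. In other words, your closing suggestion---to pass to tunnel-number-one theory and ``invoke existing rigidity results from that theory directly''---is exactly what the paper does, and it replaces your entire kernel analysis with a single citation. If you want to salvage your approach, you should either carry out that citation explicitly (which collapses your argument to the paper's) or give an honest, self-contained finiteness proof for the stabilizer of a non-primitive disk pair in the genus-two Goeritz group.
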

\begin{proof}
We compress the annulus $A_0$ along a compression disk and 
denote the resulting disks by $D_0$ and $D_1$, 
where $\partial D_0 = C_0$ and $\partial D_1 = C_1$. 
Then it follows that $\MCG(S^3 , V_0 , A_0)$ is a subgroup of 
$\MCG(S^3 , V_0 , D_0 , D_1)$. 
Since $D_0$ is not primitive, the group $\MCG(S^3 , V_0 , D_0 , D_1)$ is 
finite by Cho-McCullough \cite[Proposition 17.2]{CM09}. 
\end{proof}

An annulus $A$ properly embedded in $E(V)$ is called an 
{\it unknotting annulus} of $(S^3, V)$ if 
$(S^3, V \cup N(A ; E(V)))$ is a trivial handlebody-knot of genus two. 

\begin{theorem}
\label{thm:unique unknotting annulus and symmetry groups}
Let $A$ be an unknotting annulus of $(S^3, V)$ 
and set $V_0 = V \cup N(A ; E(V))$. 
Suppose that $A$ is the unique unknotting annulus of $(S^3, V)$ 
up to ambient isotopy. 
Let $C$ be the core $S^1 \times \{ 1/2 \}$ of $A$.  
Regard $N( A ; E(V) )$ as $A \times [0,1]$ and 
set $A_0 = C \times [0,1] \subset N( A ; E(V) )$. 
If $A_0$ is compressible in $V_0$, 
then $\MCG(S^3 , V)$ is a finite group.  
\end{theorem}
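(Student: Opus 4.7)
The plan is to reduce the problem to Lemma~\ref{lem:mcg of S3, an unknotting hdbdy and a proper annulus} by constructing a natural homomorphism from $\MCG(S^3, V)$ into $\MCG(S^3, V_0, A_0)$ and then bounding its kernel. Given any $f \in \MCG(S^3, V)$, the image $f(A)$ is again an unknotting annulus of $V$; by the uniqueness hypothesis, $f$ can be isotoped within $\MCG(S^3, V)$ so that $f(A) = A$ setwise. Such an $f$ extends canonically over the regular neighborhood $N(A; E(V))$ to an automorphism $\tilde f$ of $(S^3, V_0)$ preserving $A$. Since $\tilde f$ restricts to a homeomorphism of the annulus $A$, it preserves the isotopy class of the core circle $C$, and a further isotopy inside $N(A; E(V)) = A \times [0,1]$ arranges $\tilde f(A_0) = A_0$. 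This yields a homomorphism
\[
\Phi \colon \MCG(S^3, V) \longrightarrow \MCG(S^3, V_0, A_0) .
\]

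Next, I would verify the three hypotheses of Lemma~\ref{lem:mcg of S3, an unknotting hdbdy and a proper annulus} for $A_0$ inside $V_0$. The annulus $A_0$ is non-separating in $V_0$: cutting the $I$-bundle $N(A; E(V)) = A \times [0,1]$ along $A_0 = C \times [0,1]$ splits the tube into two pieces, each still attached to the connected handlebody $V$, so $V_0 \setminus A_0$ is connected. Compressibility of $A_0$ is the standing hypothesis of the theorem. For the non-primitivity of at least one boundary component of $A_0$, I would argue by contradiction: if both $C_0$ and $C_1$ were primitive in $V_0$, then an outermost-arc surgery using a compressing disk for $A_0$, combined with the uniqueness of $A$, would produce either a reducing sphere for $V$ or an essential $\partial$-compression of $A$ in $E(V)$, each of which contradicts $V$ being a non-trivial irreducible handlebody-knot (the annulus $A$ is essential in the $\partial$-irreducible manifold $E(V)$). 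Lemma~\ref{lem:mcg of S3, an unknotting hdbdy and a proper annulus} then implies that $\MCG(S^3, V_0, A_0)$ is a finite group.

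Finally, I would bound $\ker \Phi$. An element $f \in \ker \Phi$ has extension $\tilde f$ isotopic to the identity in $(S^3, V_0, A_0)$. Since $V$ is recovered canonically, up to isotopy, from $(V_0, A_0)$ as the complement in $V_0$ of the two solid-torus half-tubes obtained by cutting $N(A; E(V))$ along $A_0$, such a trivializing isotopy may be arranged to preserve $V$, modulo the residual ambiguity of a Dehn twist along $A_0$; as $A_0$ is compressible in $V_0$, that Dehn twist is trivial in $\MCG(V_0)$, so $\ker \Phi$ is finite. Combining this with the finiteness of $\MCG(S^3, V_0, A_0)$ yields the desired finiteness of $\MCG(S^3, V)$.

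The hardest part will be the non-primitivity claim for $C_0$ or $C_1$: one must combine the outermost-arc surgery on the compressing disk of $A_0$ with the uniqueness of $A$ and the essentiality of $A$ in $E(V)$ to derive the required contradiction. A secondary technical point is verifying that the extension and descent of isotopies between $(S^3, V)$ and $(S^3, V_0, A_0)$ are compatible enough to deliver the finite-kernel claim, since in principle Dehn twists along $A_0$ in $V_0$ could correspond to non-trivial mapping classes in $\MCG(S^3, V)$ and must be ruled out using compressibility.
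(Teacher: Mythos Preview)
Your overall strategy matches the paper's: map $\MCG(S^3,V)$ into $\MCG(S^3,V_0,A_0)$ using the uniqueness of $A$, verify the hypotheses of Lemma~\ref{lem:mcg of S3, an unknotting hdbdy and a proper annulus}, and conclude. Two points of divergence are worth noting.

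\textbf{The non-primitivity step.} Your proposed argument (``outermost-arc surgery using a compressing disk for $A_0$, combined with the uniqueness of $A$'') is vague, and it is not clear how it would actually produce a reducing sphere or a $\partial$-compression. The paper's argument is different and more concrete, and it does \emph{not} use the uniqueness of $A$ at this step---only the irreducibility of $V$. The paper compresses $A_0$ to two parallel disks $D_0,D_1$ in $V_0$, observes that $A_0$ is parallel to the frontier of a neighborhood of $D_0\cup l\cup D_1$ for a short arc $l$, and then argues directly (with pictures) that if both $C_0$ and $C_1$ are primitive then the handlebody obtained by cutting $V_0$ along $A_0$---namely $V$ itself---is visibly reducible (in fact trivial, by a result of Gordon). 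The mixed separating/non-separating primitive case is handled the same way. So the contradiction comes straight from irreducibility of $V$, with no appeal to uniqueness of $A$ or to surgery on intersection arcs.

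\textbf{The kernel/isomorphism step.} The paper does not bound a kernel; it simply observes that the correspondence $f\leftrightarrow\tilde f$ is a bijection, because $V$ is recovered from $(V_0,A_0)$ by cutting along $A_0$, and conversely any element of $\MCG(S^3,V_0,A_0)$ descends to $\MCG(S^3,V)$. Thus $\MCG(S^3,V)\cong\MCG(S^3,V_0,A_0)$. Your Dehn-twist discussion is unnecessary here: a Dehn twist along $A_0$ is a nontrivial element of $\MCG(S^3,V_0,A_0)$ (hence of the target, not of the kernel), and the compressibility of $A_0$ in $V_0$ plays no role in the injectivity of $\Phi$.
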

\begin{proof}
It is clear from the definition that 
$A_0$ is non-separating in $V_0$. 
Set $\partial A_0 = C_0 \cup C_1$. 
Since $A_0$ is compressible, both $C_0$ and $C_1$ bound 
disks $D_0$ and $D_1$ in $V_0$, respectively. 
Then there exists a simple arc $l$ embedded in $V_0$ 
such that 
\begin{itemize}
\item
$l \cap (D_0 \cup D_1) = \partial l$,  
\item
$l \cap D_0 \neq \emptyset$ and $l \cap D_1 \neq \emptyset$, and 
\item
the closure of $N(l \cup D_0 \cup D_1 ; V_0) \setminus \partial V_0$ 
consists of three components $D_0^\prime$, $D_1^\prime$ and 
$A_0^\prime$, where 
$D_0^\prime$ is parallel to $D_0$, 
$D_1^\prime$ is parallel to $D_1$ and 
$A_0^\prime$ is parallel to $A_0$.  
\end{itemize}
See Figure \ref{fig:handlebody_and_annulus}. 
\begin{figure}[htbp]
\begin{center}
\includegraphics[width=8cm,clip]{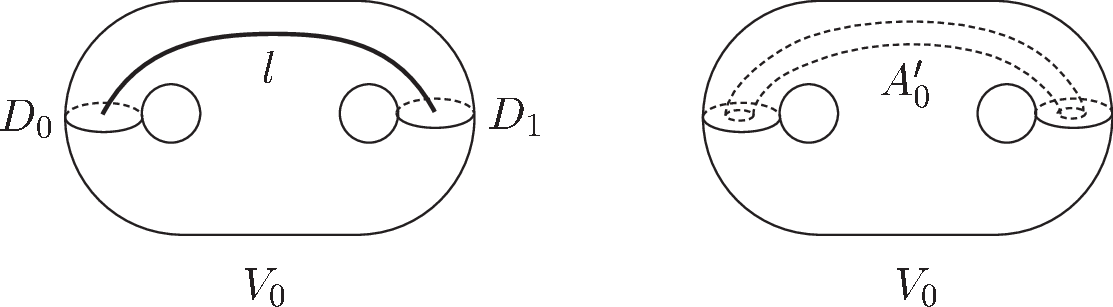}
\caption{Primitive disks $D_0$ and $D_1$ and an arc $l$ connecting them.}
\label{fig:handlebody_and_annulus}
\end{center}
\end{figure}
Since $A_0$ is non-separating, either $C_0$ or $C_1$ is non-separating. 
Assume that both $C_0$ and $C_1$ are primitive non-separating 
simple closed curves. 
Recall that the handlebody-knot $(S^3, V)$ is obtained by 
cutting $V_0$ along an annulus $A_0^\prime$. 
Then we can isotope $V_0$ in $S^3$ as shown in 
Figure \ref{fig:handlebody_and_annulus_deformation}. 
\begin{figure}[htbp]
\begin{center}
\includegraphics[width=14cm,clip]{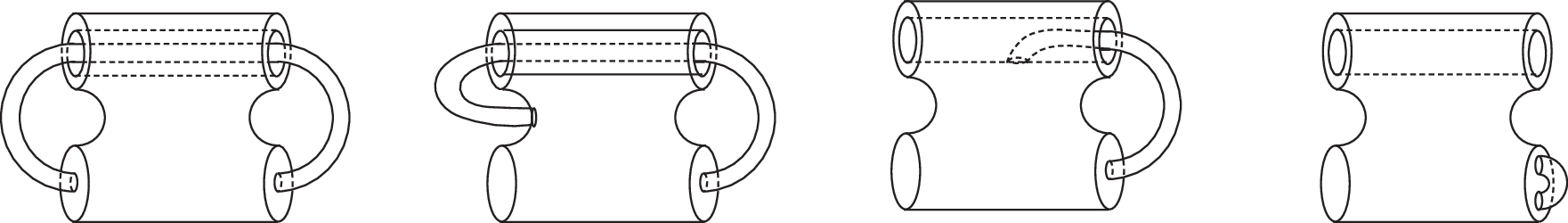}
\caption{Cutting $V_0$ along $A_0^\prime$ and an ambient isotopy.}
\label{fig:handlebody_and_annulus_deformation}
\end{center}
\end{figure}
The resulting figure implies that $(S^3, V)$ is reducible, which 
contradicts our assumption. 
(In fact, since $V$ is obtained from 
an unknotted solid torus by digging out a tunnel,  
it follows from Gordon \cite{Gor87} that 
$(S^3, V)$ is an unknotted handlebody of genus two.)
We can also deduce analogously a contradiction in the case 
where $C_0$ is a primitive separating simple closed curve and 
$C_1$ is a primitive non-separating one.  
As a consequence, $A_0$ satisfies the 
assumption of 
Lemma \ref{lem:mcg of S3, an unknotting hdbdy and a proper annulus}. 

Let $f$ be an element of $\MCG(S^3 , V)$. 
Since $A$ is the unique unknotting annulus for $(S^3, V)$, 
we can assume that $f (N(A ; E(V))) = N(A ; E(V))$. 
Hence $f$ uniquely determines an element $\tilde{f}$ of 
$\MCG(S^3 , V_0 , A)$. 
Conversely, each element of $\MCG(S^3 , V_0 , A)$ 
uniquely determines an element of $\MCG(S^3 , V)$. 
Thus we have  
\[
\MCG(S^3 , V) \cong  \MCG(S^3, V_0, A_0 )
\] 
and the  assertion follows immediately from  
Lemma \ref{lem:mcg of S3, an unknotting hdbdy and a proper annulus}. 
\end{proof}

%

\begin{example}
The left-hand side in Figure \ref{fig:irreducible_handlebody-knots_and_annulus} 
illustrates a series of handlebody-knots $(S^3, V_n)$ $(n \geqslant 3)$ of genus two. 
We note that in the table of irreducible handlebody-knots in Ishii-Kishimoto-Moriuchi-Suzuki \cite{IKMS12}, 
$(S^3, V_3) = 5_1$ and $(S^3, V_4) = 6_1$. 
It follows easily from the arguments in Motto \cite{Mot90} and Lee-Lee \cite{LL12} 
that these handlebody-knots are irreducible and 
that the annulus $A$ depicted in the figure is a 
unique unknotting annulus properly embedded in 
$E(V_n)$. 
Moreover, it follows that 
all of these handlebody-knots are not {\it amphicheiral}, i.e. 
$\MCG(S^3, V_n) \cong \MCG_+(S^3, V_n)$.  
\begin{figure}[htbp]
\begin{center}
\includegraphics[width=10cm,clip]{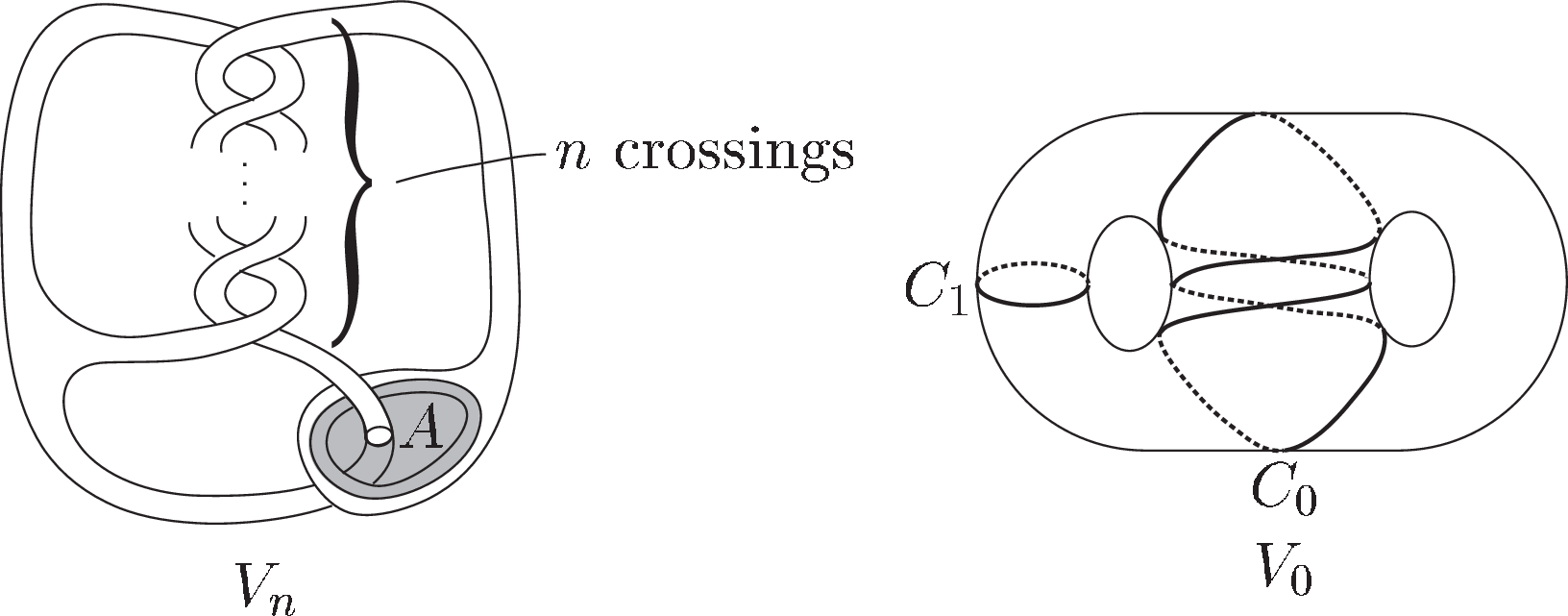}
\caption{The irreducible handlebody-knot $(S^3, V_n)$ and the curves $C_0$ and $C_1$.}
\label{fig:irreducible_handlebody-knots_and_annulus}
\end{center}
\end{figure}
The right-hand side in the figure shows the curves $C_0$ and $C_1$ in the case $n=3$. 
The ``hyperelliptic" involution is the unique non-trivial element of 
$\MCG ( S^3, V_0 )$  
which preserve the curves $C_0 \cup C_1$. 
However, this involution does not preserve the annulus $A_0$, 
hence we have $\MCG(S^3, V_n) = 1$. 
\end{example}

\noindent {\bf Acknowledgments.} 
The author wishes to express his gratitude 
to Sangbum Cho for his helpful comments 
on a preliminary version 
of the paper.
Part of this work was carried out while the author was visiting 
the Mathematisches Forschungsinstitut Oberwolfach. 
The institute kindly offered an invitation for the stay 
while his University was affected 
by the 2011 Tohoku earthquake. 
He is grateful to the MFO and its staffs for 
the offer, the financial support and warm hospitality. 
The author wrote the paper in the current form form during his visit to 
Universit\`a di Pisa as a 
JSPS Postdoctoral Fellow for Reserch Abroad. 
He is grateful to the university and its staffs for 
the warm hospitality. 
Finally, he is grateful to the anonymous referee 
for his or her comments that improved the exposition.


\begin{thebibliography}{99999}

\bibitem{Akb08} 
Akbas, E., 
A presentation for the automorphisms of the $3$-sphere 
that preserve a genus two Heegaard splitting, Pacific J. Math.  {\bf 236}  (2008),  no. 2, 201--222.


\bibitem{BF13} 
Benedetti, R., Frigerio, R., 
Levels of knotting of spatial handlebodies, 
Trans. Amer. Math. Soc.  {\bf 365}  (2013),  no. 4, 2099--2167.


\bibitem{Bir74} 
Birman, J. S., 
\emph{Braids, links, and mapping class groups}, 
Annals of Mathematics Studies, No. 82, 
Princeton University Press, Princeton, N.J., 1974.  


\bibitem{BZ89} 
Boileau, M., Zimmermann, B., 
The $\pi$-orbifold group of a link, 
Math. Z. {\bf 200} (1989),  no. 2, 187--208.


\bibitem{Bro84} 
Brown, K. S., 
Presentations for groups acting on simply-connected complexes, 
J. Pure Appl. Algebra  {\bf 32}  (1984),  no. 1, 1--10.


\bibitem{Cho08} 
Cho, S., 
Homeomorphisms of the 3-sphere that preserve a Heegaard splitting of genus two, 
Proc. Amer. Math. Soc.  {\bf 136}  (2008),  no. 3, 1113--1123. 


\bibitem{Cho13} 
Cho, S., 
Genus-two Goeritz groups of lens spaces, Pacific J. Math.  {\bf 265}  (2013),  no. 1, 1--16. 

\bibitem{CM09} 
Cho, S., McCullough, D., 
The tree of knot tunnels, Geom. Topol.  {\bf 13}  (2009),  no. 2, 769--815. 


\bibitem{CHK00} 
Cooper, D., Hodgson, C. D., Kerckhoff, S. P., 
Three-dimensional orbifolds and cone-manifolds, MSJ Memoirs, 5, Mathematical Society of Japan, Tokyo, 2000.


\bibitem{FNPT05} 
Flapan, E., Naimi, R., Pommersheim, J., Tamvakis, H., 
Topological symmetry groups of graphs embedded in the $3$-sphere, 
Comment. Math. Helv. {\bf 80}  (2005),  no. 2, 317--354.


\bibitem{FM97} 
Fomenko, A. T., Matveev, S. V., 
\emph{Algorithmic and computer methods for three-manifolds}, 
Mathematics and its Applications, 425, Kluwer Academic Publishers, Dordrecht, 1997. 


\bibitem{Goe33} 
Goeritz, L., 
Die Abbildungen der Brezelfl\"{a}che und der Vollbrezel vom Geschlecht $2$, 
Abh. Math. Sem. Univ. Hamburg {\bf 9}  (1933),  no. 1, 244--259. 


\bibitem{Gor87} 
Gordon, C. McA., 
On primitive sets of loops in the boundary of a handlebody, Topology Appl.  {\bf 27}  (1987),  no. 3, 285--299.


\bibitem{GL89} 
Gordon, C. McA., Luecke, J., 
Knots are determined by their complements, J. Amer. Math. Soc.  {\bf 2}  (1989),  no. 2, 371--415.


\bibitem{HU12} 
Hamenst\"adt, U., Hensel, S., 
The geometry of the handlebody groups I: distortion, 
J. Topol. Anal.  {\bf 4}  (2012),  no. 1, 71--97. 


\bibitem{HHMP10} 
Heard, D., Hodgson, C., Martelli, B., Petronio, C., 
Hyperbolic graphs of small complexity, Experiment. Math. {\bf 19}  (2010),  no. 2, 211--236.


\bibitem{Hol91} 
Holzmann, W. H., 
An equivariant torus theorem for involutions, 
Trans. Amer. Math. Soc.  {\bf 326}  (1991),  no. 2, 887--906. 


\bibitem{Ish08} 
Ishii, A., 
Moves and invariants for knotted handlebodies, 
Algebr. Geom. Topol.  {\bf 8}  (2008),  no. 3, 1403--1418. 


\bibitem{IKMS12} 
Ishii, A., Kishimoto, K., Moriuchi, H., Suzuki, M., 
A table of genus two handlebody-knots up to six crossings, 
J. Knot Theory Ramifications  {\bf 21}  (2012),  no. 4, 1250035, 9 pp. 


\bibitem{Joh79} 
Johannson, K., 
\emph{Homotopy equivalences of $3$-manifolds with boundaries}. 
Lecture Notes in Mathematics, 761, Springer, Berlin, 1979. 

\bibitem{Kap09} 
Kapovich, M., 
\emph{Hyperbolic manifolds and discrete groups}, 
Progress in Mathematics, 183. Birkh\"{a}user Boston, Inc., Boston, MA, 2001. 


\bibitem{Kaw96}
Kawauchi, A., 
\emph{A survey of knot theory},  
Birkh\"{a}user Verlag, Basel, 1996. 


\bibitem{Koj88} 
Kojima, S., 
Isometry transformations of hyperbolic $3$-manifolds, 
Topology Appl.  {\bf 29}  (1988),  no. 3, 297--307. 


\bibitem{LL12}
Lee, J. H., Lee, S., 
Inequivalent handlebody-knots with homeomorphic complements, 
Algebr. Geom. Topol.  {\bf 12}  (2012),  no. 2, 1059--1079. 


\bibitem{McC91} 
McCullough, D., 
Virtually geometrically finite mapping class groups of 3-manifolds, 
J. Differential Geom.  {\bf 33} (1991),  no. 1, 1--65. 


\bibitem{Mot90} 
Motto, M., 
Inequivalent genus $2$ handlebodies in $S^3$ with homeomorphic complement, 
Topology Appl.  {\bf 36}  (1990),  no. 3, 283--290. 


\bibitem{Sch02} 
Scharlemann, M., 
\emph{Heegaard splittings of compact $3$-manifolds}, 
Handbook of geometric topology,  921--953, North-Holland, Amsterdam, 2002. 


\bibitem{Sch04} 
Scharlemann, M., 
Automorphisms of the $3$-sphere that preserve a genus two Heegaard splitting, 
Bol. Soc. Mat. Mexicana (3)  {\bf 10}  (2004),  Special Issue, 503--514.


\bibitem{Ser77} 
Serre, J.-P., 
\emph{Arbres, amalgames, $\mathrm{SL_2}$}, 
Ast\'{e}risque, No. 46, Soci\'{e}t\'{e} Math\'{e}matique de France, Paris, 1977.

\bibitem{Sim86} 
Simon, J., 
Topological chirality of certain molecules, 
Topology  {\bf 25}  (1986),  no. 2, 229--235. 

\bibitem{Sim94} 
Sims, C. C., 
\emph{Computation with finitely presented groups},  
Encyclopedia of Mathematics and its Applications, 48, Cambridge University Press, Cambridge, 1994. 

\bibitem{Wal68} 
Waldhausen, F., 
Heegaard-Zerlegungen der 3-Sph\"{a}re, 
Topology {\ bf 7} (1968), 195--203. 
\end{thebibliography}
\end{document}